\newtheorem{theo}{Theorem}[section]
\newtheorem{prop}[theo]{Proposition}
\newtheorem{lem}[theo]{Lemma}
\newtheorem{coro}[theo]{Corollary}
\def\remark#1{{\refstepcounter{theo}\label{#1}\noindent\sc Remark  
\arabic{section}.\arabic{theo} - }}
\def\equat{\refstepcounter{theo}$$~}
\def\endequat{\leqno{\boldsymbol{(\arabic{section}.\arabic{theo})}}~$$}
    \def\NM{{\mathbb{N}}}
\def\SG{{\mathfrak S}}
    \def\ZM{{\mathbb{Z}}}
  \def\ab{{\mathbf a}}  
  \def\bb{{\mathbf b}}  
    \def\DC{{\mathcal{D}}}
    \def\EC{{\mathcal{E}}}
  \def\hb{{\mathbf h}}  \def\HC{{\mathcal{H}}}
    \def\PC{{\mathcal{P}}}
    \def\RC{{\mathcal{R}}}
  \def\sb{{\mathbf s}}
    \def\XC{{\mathcal{X}}}
\def\a{\alpha}
\def\b{\beta}
\def\g{\gamma}
\def\G{\Gamma}
\def\d{\delta}
\def\D{\Delta}
\def\ph{\varphi}
\def\l{\lambda}
\def\m{\mu}
\def\s{\sigma}
\def\t{\tau}
            \def\mut{{\tilde{\mu}}}
\DeclareMathOperator{\Id}{{\mathrm{Id}}}
\def\to{\rightarrow}
\def\longto{\longrightarrow}
\def\fonction#1#2#3#4#5{\begin{array}{rccc}
{#1} : & {#2} & \longto & {#3} \\
& {#4} & \longmapsto & {#5} 
\end{array}}
\def\fonctio#1#2#3#4{\begin{array}{ccc}
{#1} & \longto & {#2} \\
{#3} & \longmapsto & {#4} 
\end{array}}
\def\vide{\varnothing}
\def\DS{\displaystyle}
\def\SS{\scriptstyle}
\def\finl{~$\SS \square$}
\def\lexp#1#2{\kern\scriptspace\vphantom{#2}^{#1}\kern-\scriptspace#2}
\def\le{\hspace{0.1em}\mathop{\leqslant}\nolimits\hspace{0.1em}}
\def\ge{\hspace{0.1em}\mathop{\geqslant}\nolimits\hspace{0.1em}}
\mathchardef\inferieur="321E
\mathchardef\superieur="321F
\def\eqna{\begin{eqnarray*}}
\def\endeqna{\end{eqnarray*}}
\def\itemth#1{\item[${\mathrm{(#1)}}$]}
\long\def\@car#1#2\@nil{#1}
\long\def\@first#1#2{#1}
\long\def\@second#1#2{#2}
\long\def\ifempty#1{\expandafter\ifx\@car#1@\@nil @\@empty
  \expandafter\@first\else\expandafter\@second\fi}
\newcounter{soussection}[section]
\def\soussection#1{\refstepcounter{soussection}
    \noindent{\bf \arabic{section}.\Alph{soussection}. #1.}
    \addcontentsline{toc}{section}{\quad 
        {\arabic{section}.\Alph{soussection}. #1.}}}
\def\gauche{\stackrel{L}{\longleftarrow}}
\def\droite{\stackrel{R}{\longleftarrow}}
\def\bilatere{\stackrel{LR}{\longleftarrow}}
\def\pre#1{\leqslant_{#1}}
\def\shape{\sb\hb}
\begin{document}

\baselineskip=16pt

\title{On Kazhdan-Lusztig cells in type ${\boldsymbol{B}}$}

\author{C\'edric Bonnaf\'e}
\address{\noindent 
Labo. de Math. de Besan\c{c}on (CNRS: UMR 6623), 
Universit\'e de Franche-Comt\'e, 16 Route de Gray, 25030 Besan\c{c}on
Cedex, France} 

\makeatletter
\email{cedric.bonnafe@univ-fcomte.fr}

\thanks{The author is partly supported by the ANR (Project 
No JC07-192339)}

\makeatother

\subjclass{According to the 2000 classification:
Primary 20C08; Secondary 20C15}

\date{\today}

\begin{abstract} 
We prove that, for any choice of parameters, the 
Kazhdan-Lusztig cells of a Weyl group of type $B$ 
are unions of combinatorial cells 
(defined using the domino insertion algorithm). 
\end{abstract}

\maketitle

\pagestyle{myheadings}

\markboth{\sc C. Bonnaf\'e}{\sc On Kazhdan-Lusztig cells in type $B$}


Let $(W_n,S_n)$ be the Weyl group of type $B_n$, where
$S_n=\{t,s_1,\dots,s_{n-1}\}$ and where 
the Dynkin diagram is given by
\begin{center}
\begin{picture}(220,30)
\put( 40, 10){\circle{10}}
\put( 44,  7){\line(1,0){33}}
\put( 44, 13){\line(1,0){33}}
\put( 81, 10){\circle{10}}
\put( 86, 10){\line(1,0){29}}
\put(120, 10){\circle{10}}
\put(125, 10){\line(1,0){20}}
\put(155,  7){$\cdot$}
\put(165,  7){$\cdot$}
\put(175,  7){$\cdot$}
\put(185, 10){\line(1,0){20}}
\put(210, 10){\circle{10}}
\put( 38, 20){$t$}
\put( 76, 20){$s_1$}
\put(116, 20){$s_2$}
\put(204, 20){$s_{n{-}1}$}
\end{picture}
\end{center}
Let $\ell : W_n \to \NM=\{0,1,2,3,\dots\}$ be the length function. 
Let $\G$ be a totally ordered abelian group and let 
$\ph : W_n \to \G$ be a weight function (in the sense 
of Lusztig \cite[\S 3.1]{lusztig}). We set
$$\ph(t)=b\quad\text{and}\quad \ph(s_1)=\cdots=\ph(s_{n-1})=a.$$
To this datum, the Kazhdan-Lusztig theory (with unequal parameters 
\cite{lusztig}) associates a partition of $W_n$ into 
left, right or two-sided cells \cite[Chapter 8]{lusztig}.

In \cite[Conjectures A and B]{bgil}, 
Geck, Iancu, Lam and the author have proposed several 
conjectures for describing these partitions (at least whenever 
$a$, $b > 0$, but this is not such a big restriction, as can be 
seen from \cite[Corollary 5.8]{semicontinu}): they involve a {\it domino 
insertion algorithm}. Roughly speaking, one can define a 
partition of $W_n$ into {\it combinatorial $($left, right or two-sided$)$ 
$(a,b)$-cells} (which depend on $a$, $b$ and which are defined 
combinatorially using the domino insertion algorithm): the 
combinatorial (left, right or two-sided) cells should coincide 
with the Kazhdan-Lusztig (left, right or two-sided) cells.
The aim of this paper is to prove one of the two inclusions 
(see Theorem \ref{main}):

\bigskip

\noindent{\bf Theorem.} 
{\it If two elements of $W_n$ lie in the same combinatorial 
(left, right or two-sided) cell, then they lie in the same Kazhdan-Lusztig 
(left, right or two-sided) cell.}

\bigskip

In the case of the symmetric group, the partition into left 
cells (obtained by Kazhdan and Lusztig \cite[Theorem 1.4]{KL}) 
uses the Robinson-Schensted correspondence, and the key tool is 
a description of this correspondence using 
plactic/coplactic relations (also called 
Knuth relations). For $W_n$, whenever $b > (n-1) a$, the 
partition into left, right or two-sided cells 
was obtained by Iancu and the author 
(see \cite[Theorem 7.7]{lacriced} and 
\cite[Corollaries 3.6 and 5.2]{bilatere}) 
again by using the translation of a generalised Robinson-Schensted 
correspondence through plactic/coplactic relations. 

Recently, M. Taskin \cite{taskin} and T. Pietraho \cite{pietraho} 
have independently provided plactic/coplactic 
relations for the domino insertion algorithm. Our methods 
rely heavily on their results: we show that, if two elements 
of $W_n$ are directly related by a plactic relation, then 
they are in the same Kazhdan-Lusztig cell. The key step will be the 
Propositions \ref{exemples m1} and \ref{exemples m2}, where 
some multiplications between elements of the Kazhdan-Lusztig 
basis are computed by brute force. We then derive some  
consequences (see Propositions \ref{quasi asymptotique} and 
\ref{coro exemples m2}), where it is proved that some elements 
are in the same left cells. Then, the rest of the proof just 
uses the particular combinatoric of Weyl groups of type $B$, 
together with classical properties of Kazhdan-Lusztig cells. 
\bigskip

\noindent{\bf Acknowledgements.} Part of this work was done while the 
author stayed at the MSRI during the winter 2008. The author wishes to 
thank the Institute for its hospitality and the organizers of the two 
programs of that period for their invitation.

The author also wants to thank warmly L. Iancu and N. Jacon for their careful reading 
of a preliminary version of this paper and for their useful comments.

\bigskip

\tableofcontents

\section{Notation}

\medskip

\soussection{Weyl group}
Let $(W_n,S_n)$ be the Weyl group of type $B_n$, where
$S_n=\{t,s_1,\dots,s_{n-1}\}$ and where 
the Dynkin diagram is given by
\begin{center}
\begin{picture}(220,30)
\put( 40, 10){\circle{10}}
\put( 44,  7){\line(1,0){33}}
\put( 44, 13){\line(1,0){33}}
\put( 81, 10){\circle{10}}
\put( 86, 10){\line(1,0){29}}
\put(120, 10){\circle{10}}
\put(125, 10){\line(1,0){20}}
\put(155,  7){$\cdot$}
\put(165,  7){$\cdot$}
\put(175,  7){$\cdot$}
\put(185, 10){\line(1,0){20}}
\put(210, 10){\circle{10}}
\put( 38, 20){$t$}
\put( 76, 20){$s_1$}
\put(116, 20){$s_2$}
\put(204, 20){$s_{n{-}1}$}
\end{picture}
\end{center}
Let $\ell : W_n \to \NM=\{0,1,2,3,\dots\}$ be the length function. 
Let $I_n=\{\pm1,\dots,\pm n\}$: we shall identify $W_n$ with the 
group of permutations $w$ of $I_n$ such that $w(-i)=-w(i)$ 
for all $w \in I_n$. The identification is through the following map
$$t \longmapsto (1,-1)\quad\text{and}\quad s_i 
\longmapsto (i,i+1)(-i,-i-1).$$

The next notation comes from \cite[\S 4]{lacriced}: it is rather 
technical but will be used throughout this paper. 
We set $t_1=r_1=t$ and, for $1 \le i \le n-1$, we set
$$r_{i+1}=s_i r_i\quad\text{and}\quad t_{i+1}=s_it_is_i.$$
We shall often use the following well-known lemma:

\bigskip

\begin{lem}\label{longueur}
Let $w \in W_n$, $i \in \{1,2,\dots,n-1\}$ and $j \in \{1,2,\dots,n\}$. 
Then:
\begin{itemize}
\itemth{a} $\ell(ws_i) > \ell(w)$ (that is, $ws_i > w$) if and only 
if $w(i) < w(i+1)$.

\itemth{b} $\ell(wt_j) > \ell(w)$ if and only if $w(j) > 0$.
\end{itemize}
\end{lem}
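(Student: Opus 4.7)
The plan is to reduce both statements to the standard Coxeter-theoretic criterion that, for any reflection $s_\alpha$ of $W_n$ with associated positive root $\alpha$, one has $\ell(ws_\alpha) > \ell(w)$ if and only if $w(\alpha)$ is a positive root. Once each of $s_i$ and $t_j$ is matched with its root and the geometric action of $W_n$ is translated into the signed-permutation language, both parts follow by inspection.

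Concretely, I would fix the usual realization of the root system of type $B_n$ in $\mathbb{R}^n$ with positive roots $\{e_k : 1 \le k \le n\} \cup \{e_k \pm e_l : 1 \le l < k \le n\}$, and extend indices by the convention $e_{-k} = -e_k$ so that the signed-permutation action becomes $w \cdot e_j = e_{w(j)}$ for all $j \in I_n$. A direct check on the action shows that $s_i$ (which swaps $e_i$ with $e_{i+1}$ and fixes the other coordinates) is the reflection in the simple root $e_{i+1} - e_i$, while $t = t_1$ (which sends $e_1 \mapsto -e_1$ and fixes the other coordinates) is the reflection in $e_1$.

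For (a), I would compute $w(e_{i+1} - e_i) = e_{w(i+1)} - e_{w(i)}$ and run through the four possible sign combinations for $(w(i), w(i+1))$: rewriting each resulting vector as an honest positive or negative root via $e_{-k} = -e_k$ shows in every case that it is positive precisely when $w(i) < w(i+1)$ (using, in the mixed-sign cases, that a signed integer with negative sign is always less than one with positive sign). For (b), I would first prove by induction on $j$ that $t_j$ is the reflection in the root $e_j$: the base case $j=1$ is the definition of $t$, and the inductive step uses the relation $t_{j+1} = s_j t_j s_j$ together with the general identity $w s_\alpha w^{-1} = s_{w(\alpha)}$ and the elementary calculation $s_j(e_j) = e_{j+1}$. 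The reflection criterion then yields $\ell(wt_j) > \ell(w)$ if and only if $w(e_j) = e_{w(j)}$ is a positive root, which holds exactly when $w(j) > 0$.

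There is no real obstacle in this argument; the only mildly delicate step is the sign bookkeeping in part (a), where four cases must be examined, but each reduces to a one-line check. An alternative, more self-contained route would be to invoke the explicit combinatorial length formula for signed permutations (a signed analogue of the inversion count) and track by hand how multiplication by $s_i$ or $t_j$ on the right modifies each summand; I would prefer the root-theoretic approach above for its brevity.
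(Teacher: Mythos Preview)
Your argument is correct. The reduction to the positivity criterion $\ell(ws_\alpha)>\ell(w)\Longleftrightarrow w(\alpha)\in\Phi^+$ is exactly the right tool, your identification of the roots attached to $s_i$ and $t_j$ is accurate, and the four-case sign analysis in part~(a) and the induction for $t_j=s_{e_j}$ in part~(b) go through without difficulty.

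As for comparison: the paper gives no proof at all. The lemma is introduced with the phrase ``the following well-known lemma'' and is used freely thereafter as a black box. Your root-theoretic argument is the standard justification one would find in a textbook treatment of type~$B$ Coxeter groups, and it is entirely adequate here. The alternative you mention (tracking the explicit signed-inversion length formula) would also work and is equally standard; the two approaches are of comparable length and difficulty.
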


\bigskip

As a permutation of $I_n$, we have
\equat\label{ti}
t_i=(i,-i)
\endequat
and
\equat\label{ri}
r_i(j)=
\begin{cases}
-i & \text{if $j=1$,}\\
j-1 & \text{if $2 \le j \le i$,}\\
j & \text{if $i+1 \le j \le n$.}
\end{cases}
\endequat
An easy computation shows that, if $j \in \{1,2,\dots,n-1\}$ 
and $i \in \{1,2,\dots,n\}$, then
\equat\label{sk rl}
s_j r_i =
\begin{cases}
r_i s_j & \text{if $j > i$,}\\
r_{i+1} & \text{if $j=i$,}\\
r_{i-1} & \text{if $j=i-1$,}\\
r_i s_{j+1} & \text{if $1 \le j < i-1$.}
\end{cases}
\endequat
Note also that, if $l \ge 2$, then 
\equat\label{rl rl}
r_l r_l = r_{l-1} r_l s_1.
\endequat
We set $a_0=1$ and, if $0 \le l \le n$, we set
$$a_l=r_1 r_2 \cdots r_l.$$
As a permutation of $I_n$, we have 
\equat\label{al}
a_l(i)=
\begin{cases}
i-1-l & \text{if $1 \le i \le l$,}\\
i & \text{if $l+1 \le i \le n$.}
\end{cases}
\endequat
In particular,
\equat\label{inverse al}
a_l^{-1}=a_l
\endequat
and, if $i \in \{1,2,\dots,n-1\}\setminus\{l\}$, then
\equat\label{al s}
a_l s_i a_l =
\begin{cases}
s_{l-i} & \text{if $i < l$},\\
s_i & \text{if $i > l$.}
\end{cases}
\endequat
Note also that
\equat\label{l al}
\ell(a_l)=\frac{l(l+1)}{2}.
\endequat

We shall identify the symmetric group $\SG_n$ with the subgroup 
of $W_n$ generated by $s_1$,\dots, $s_{n-1}$. We also 
set $I_n^+=\{1,2,\dots,n\}$. Then, as a group of permutations 
of $I_n$, we have
\equat\label{sn}
\SG_n=\{w \in W_n~|~w(I_n^+)=I_n^+\}.
\endequat
If $1 \le i \le j \le n$, we denote by $[i,j]$ the set 
$\{i,i+1,\dots,j\}$ and by $\SG_{[i,j]}$ the subgroup 
of $W_n$ (or of $\SG_n$) generated by $s_i$, $s_{i+1}$,\dots, $s_{j-1}$. 
If $j < i$, then we set $[i,j] =\vide$ and $\s_{[i,j]}=1$. 
As a group of permutations of $I_n$, we have
\equat\label{sij}
\SG_{[i,j]}=\{w \in \SG_n~|~
\forall k \in I_n^+\setminus [i,j], w(k)=k\}.
\endequat

The longest element of $W_n$ will be denoted by $w_n$ 
(it is usually denoted by $w_0$, but since we shall use 
induction on $n$, we need to emphasize its dependence on $n$). 
We denote by $\s_n$ the longest element of $\SG_n$. The longest 
element of $\SG_{[i,j]}$ will be denoted by $\s_{[i,j]}$. 
As a permutation of $I_n$, we have
\equat\label{wn}
w_n = (1,-1)(2,-2)\cdots (n,-n).
\endequat
Note also that
\equat\label{wn an sn}
\begin{cases}
w_n & = t_1 t_2 \cdots t_n =t_n\cdots t_2 t_1 \\
w_n & = a_n \s_n = \s_n a_n,\\
\s_n & = \s_{[1,n]} 
\end{cases}
\endequat
and that
\equat\label{centre}
\textit{$w_n$ is central in $W_n$.}
\endequat

\bigskip

\soussection{Decomposition of elements of ${\boldsymbol{W_n}}$} 
If $0 \le l \le n$, we denote by $\SG_{l,n-l}$ the subgroup 
of $\SG_n$ generated by $\{s_1,\dots,s_{n-1}\}\setminus\{s_l\}$. Then 
$\SG_{l,n-l}=\SG_{[1,l]} \times \SG_{[l+1,n]} \simeq \SG_l \times \SG_{n-l}$. 
We denote by $Y_{l,n-l}$ the set of elements $w \in \SG_n$ 
which are of minimal length in $w\SG_{l,n-l}$. Note that $a_l$ normalizes $\SG_{l,n-l}$ 
(this follows from \ref{al s}).

If $w \in W_n$, we denote by $\ell_t(w)$ the number of occurences 
of $t$ in a reduced decomposition of $w$ (this does not depend on 
the choice of the reduced decomposition). We set 
$\ell_s(w)=\ell(w)-\ell_t(w)$.

\bigskip

\begin{lem}\label{decomposition}
Let $w \in W_n$. Then there exists a unique quadruple 
$(l,\a,\b,\s)$ where $0 \le l \le n$, $\a$, $\b \in Y_{l,n-l}$ and 
$\s \in \SG_{l,n-l}$ are such that $w=\a a_l\s \b^{-1}$. Moreover, 
there exists a unique sequence $1 \le i_1 < i_2 < \cdots < i_l \le n$ 
such that $\a a_l = r_{i_1} r_{i_2}\cdots r_{i_l}$. 
We have
$$\ell(w)=\ell(\a)+\ell(a_l)+\ell(\s)+\ell(\b),$$
$$\ell_t(w)=l$$
$$\{i_1,\dots,i_l\}=\{i \in [1,n]~|~w^{-1}(i) < 0\}.\leqno{\textit{and}}$$
Note also that 
$$\ell(\a)=\sum_{k=1}^l (i_k-k).$$
\end{lem}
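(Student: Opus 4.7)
The plan is to establish the decomposition via the $(\SG_n,\SG_n)$-double coset structure of $W_n$ and then extract the combinatorial data.

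First I would show $W_n = \bigsqcup_{l=0}^{n}\SG_n a_l \SG_n$ together with $\SG_n \cap a_l\SG_n a_l^{-1} = \SG_{l,n-l}$. The latter is a direct sign analysis using the permutation formula for $a_l$ given in the excerpt: for $\sigma \in \SG_n$, $a_l\sigma a_l^{-1} \in \SG_n$ iff $\sigma$ preserves both $[1,l]$ and $[l+1,n]$ setwise. Exhaustiveness and disjointness of the union then follow from the counting identity $|W_n| = 2^n n! = n!\sum_l\binom{n}{l}$, or alternatively from the invariant $|\{i \in [1,n] : w(i) < 0\}|$ being constant on each double coset with value $l$ on $a_l$. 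From here, the standard double-coset argument (using also that $a_l$ normalizes $\SG_{l,n-l}$, as the excerpt records) gives a unique factorization $w = \alpha a_l \gamma$ with $\alpha \in Y_{l,n-l}$, $\gamma \in \SG_n$; then the parabolic decomposition $\gamma = \sigma \beta^{-1}$ with $\beta \in Y_{l,n-l}$, $\sigma \in \SG_{l,n-l}$ is unique. This yields existence and uniqueness of the quadruple $(l,\alpha,\sigma,\beta)$.

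For the length statements, I would compute $\ell(\alpha a_l)$ via the signed-permutation length formula for $B_n$. Since $\alpha(1) < \cdots < \alpha(l)$ and $\alpha(l+1) < \cdots < \alpha(n)$, the element $\alpha a_l$ has no ordinary inversions, exactly $l$ negative values, and its negative-sum pairs decompose as $\binom{l}{2}$ pairs within $[1,l]$ plus the $\ell(\alpha)$ ordinary inversions of $\alpha$ (which necessarily straddle $[1,l]$ and $[l+1,n]$). This totals $\ell(\alpha) + \binom{l}{2} + l = \ell(\alpha) + \ell(a_l)$. A symmetric accounting handles $\sigma$ and $\beta^{-1}$ on the right to yield full additivity. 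The formula $\ell(\alpha) = \sum_{k=1}^{l}(i_k - k)$ with $i_k := \alpha(k)$ is the classical inversion count for minimal representatives of $\SG_n/\SG_{l,n-l}$, since $i_k - k$ counts elements of $\alpha([l+1,n]) = [1,n]\setminus\{i_1,\ldots,i_l\}$ smaller than $i_k$.

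To identify $\{i_1,\ldots,i_l\}$ and $\ell_t(w)$: setting $i_k := \alpha(k)$, I would verify $\alpha a_l = r_{i_1} r_{i_2}\cdots r_{i_l}$ by induction on $l$ using the commutation rules for the $r_i$ in the excerpt (or by checking directly as signed permutations that both sides send $j \mapsto -i_{l+1-j}$ for $j \le l$ and $j \mapsto \alpha(j)$ for $j > l$), with uniqueness of the sequence following from uniqueness of $\alpha$. Writing $w^{-1} = \beta \sigma^{-1} a_l \alpha^{-1}$ and noting that $\alpha^{-1}, \beta, \sigma^{-1}\in \SG_n$ are sign-preserving while $a_l$ flips the sign of indices of absolute value $\le l$, the condition $w^{-1}(i) < 0$ becomes $\alpha^{-1}(i) \in [1,l]$, i.e., $i \in \{i_1,\ldots,i_l\}$. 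Finally $\ell_t(a_l) = l$ because $a_l = r_1\cdots r_l$ is reduced (its summed length $\sum_{k=1}^{l} k$ equals $\ell(a_l)$) and each $r_k = s_{k-1}\cdots s_1 t$ contains exactly one occurrence of $t$, while $\ell_t$ vanishes on $\SG_n$; combining with additivity gives $\ell_t(w) = l$. The main obstacle is the bookkeeping behind $\SG_n \cap a_l\SG_n a_l^{-1} = \SG_{l,n-l}$ and the permutation identity $\alpha a_l = r_{i_1}\cdots r_{i_l}$, both of which reduce to careful sign- and position-tracking; the remainder is routine double-coset and parabolic theory for Coxeter groups.
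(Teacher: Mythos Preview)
Your proposal is correct. The paper does not actually prove this lemma: it simply cites \cite[\S 4, Proposition 4.10]{lacriced}, where the result is established. Your argument via the $(\SG_n,\SG_n)$-double coset structure with distinguished representatives $a_l$ is precisely the standard route taken there, and all of your computations (the identification $\SG_n\cap a_l\SG_n a_l^{-1}=\SG_{l,n-l}$, the formula $\ell(\alpha)=\sum_k(i_k-k)$, the sign analysis for $\{i:w^{-1}(i)<0\}$, and the verification $\alpha a_l=r_{i_1}\cdots r_{i_l}$) are accurate.

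One small point worth tightening: your phrase ``a symmetric accounting handles $\sigma$ and $\beta^{-1}$ on the right'' is vague. The clean way to get full length additivity, once you have shown $\ell(\alpha a_l)=\ell(\alpha)+\ell(a_l)$, is to observe from your own computation that $\alpha a_l$ has strictly increasing one-line notation on $[1,n]$ (namely $-i_l<\cdots<-i_1<\alpha(l+1)<\cdots<\alpha(n)$), hence has no right descent in $S_n\setminus\{t\}$; therefore $\ell(\alpha a_l\cdot\gamma)=\ell(\alpha a_l)+\ell(\gamma)$ for every $\gamma\in\SG_n$, and then $\ell(\gamma)=\ell(\sigma)+\ell(\beta)$ is the usual parabolic additivity.
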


\bigskip

\begin{proof}
See \cite[\S 4, and especially Proposition 4.10]{lacriced}.
\end{proof}

\bigskip

If $l \in [0,n]$ and if $1 \le i_1 < \cdots < i_l \le n$ and 
$1 \le j_1 < \cdots < j_{n-l} \le n$ are two sequences 
such that $[1,n]=\{i_1,\dots,i_l\} \cup \{j_1,\dots,j_{n-l}\}$, then 
it follows easily from \ref{ri} that 
\equat\label{ri1 ril}
\begin{cases}
(r_{i_1}\cdots r_{i_l})^{-1}(i_k)=k-l-1 & \text{if $1 \le k \le l$,}\\
(r_{i_1}\cdots r_{i_l})^{-1}(j_k)=l+k & \text{if $1 \le k \le n-l$.}\\
\end{cases}
\endequat

\bigskip

The elements $\a$, $\b$ and $\s$ of the previous lemma will we denoted by 
$\a_w$, $\b_w$ and $\s_w$ respectively. We have
\equat\label{inverse}
\ell_t(w^{-1})=\ell_t(w),\quad 
\a_{w^{-1}}=\b_w,\quad \b_{w^{-1}}=\a_w\quad \text{and}\quad
\s_{w^{-1}}=a_l(\s_w)^{-1}a_l.
\endequat

We shall now describe how the multiplication by the longest 
element $w_n$ acts on the decomposition given by Lemma \ref{decomposition}. 
For this, we denote by $\s_{l,n-l}$ the longest element of $\SG_{l,n-l}$. 

\bigskip

\begin{prop}\label{mult w0}
Let $w \in W_n$ and let $l=\ell_t(w)$. Then:
\begin{itemize}
\itemth{a} $\ell_t(w_nw)=n-l$. 

\itemth{b} $\a_{w_n w} = \a_w \s_n \s_{n-l,l}$ and 
$\b_{w_nw}=\b_w \s_n \s_{n-l,l}$.

\itemth{c} $\s_{w_n w} = \s_n \s \s_n^{-1} \s_{n-l,l}$.

\itemth{d} Let $1 \le i_1 < \cdots < i_l \le n$ be the sequence
such that $\a_w a_l = r_{i_1} \cdots r_{i_l}$. Then 
$\a_{w_nw}=r_{j_1} \cdots r_{j_{n-l}}$, where 
$1 \le j_1 < \cdots < j_{n-l} \le n$ is the sequence such that 
$\{i_1,\dots,i_l\} \cup \{j_1,\dots,j_{n-l}\} = [1,n]$.
\end{itemize}
\end{prop}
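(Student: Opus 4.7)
The plan is to combine the permutation interpretation of $W_n$ with the uniqueness statement of Lemma \ref{decomposition} and the centrality of $w_n$ (equation \ref{centre}).

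For parts (a) and (d), I will use the combinatorial characterization $\{i\in[1,n]:w^{-1}(i)<0\}=\{i_1,\dots,i_l\}$ from Lemma \ref{decomposition}. Since $w_n$ acts on $I_n$ by $i\mapsto -i$ (equation \ref{wn}), we have $(w_nw)^{-1}(i)=w^{-1}(-i)=-w^{-1}(i)$, so $(w_nw)^{-1}(i)<0$ if and only if $w^{-1}(i)>0$. This immediately yields $\ell_t(w_nw)=n-l$, proving (a), and identifies $\{i:(w_nw)^{-1}(i)<0\}$ with the complementary sequence $\{j_1,\dots,j_{n-l}\}$, proving (d) (under the intended reading $\alpha_{w_nw}\,a_{n-l}=r_{j_1}\cdots r_{j_{n-l}}$).

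For part (b), I will work with permutations of $I_n^+$. Combining $\alpha_w a_l=r_{i_1}\cdots r_{i_l}$ with \ref{ri1 ril}, \ref{al}, and $a_l^{-1}=a_l$ (equation \ref{inverse al}), I obtain the explicit formula
$$\alpha_w(m)=\begin{cases} i_m & \text{if }m\in[1,l],\\ j_{m-l} & \text{if }m\in[l+1,n].\end{cases}$$
Applying this to $w_nw$ gives the analogous formula for $\alpha_{w_nw}$ with $(l,i,j)$ replaced by $(n-l,j,i)$. A direct computation shows $\sigma_n\sigma_{n-l,l}$ is the cyclic shift sending $[1,n-l]$ onto $[l+1,n]$ via $k\mapsto k+l$, and $[n-l+1,n]$ onto $[1,l]$ via $k\mapsto k-n+l$; composing with $\alpha_w$ recovers exactly $\alpha_{w_nw}$. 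The statement for $\beta_{w_nw}$ follows by applying this result to $w^{-1}$, using centrality of $w_n$ to obtain $\beta_{w_nw}=\alpha_{(w_nw)^{-1}}=\alpha_{w_nw^{-1}}$ together with $\alpha_{w^{-1}}=\beta_w$ from \ref{inverse}.

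For part (c), uniqueness in Lemma \ref{decomposition} reduces the claim to two checks. First, $\sigma_n\sigma_w\sigma_n^{-1}\sigma_{n-l,l}\in\SG_{n-l,n-(n-l)}=\SG_{n-l,l}$: this is clear since conjugation by $\sigma_n$ swaps $\SG_{[1,l]}$ with $\SG_{[n-l+1,n]}$ and $\SG_{[l+1,n]}$ with $\SG_{[1,n-l]}$. Second, one must verify
$$\alpha_{w_nw}\cdot a_{n-l}\cdot(\sigma_n\sigma_w\sigma_n^{-1}\sigma_{n-l,l})\cdot\beta_{w_nw}^{-1}=w_nw.$$
Plugging in the formulas for $\alpha_{w_nw}$ and $\beta_{w_nw}$ from (b), expanding $w=\alpha_wa_l\sigma_w\beta_w^{-1}$, using centrality of $w_n$ and the identity $\sigma_nw_n=a_n$ (from \ref{wn an sn} with $\sigma_n^2=1$), the whole thing reduces to the single algebraic identity
$$a_na_l=\sigma_{n-l,l}\,a_{n-l}\,\sigma_n$$
in $W_n$. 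This last identity is checked by evaluating both sides on each $m\in I_n^+$: both send $m\in[1,l]$ to $n-l+m$ and $m\in[l+1,n]$ to $m-1-n$.

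The main obstacle is purely bookkeeping: keeping track of the signed permutation formulas for $a_l$, $a_n$, $\sigma_n$ and $\sigma_{n-l,l}$ simultaneously, especially the sign conventions when $a_l$ or $a_n$ map a positive integer to a negative one and one then composes with an element of $\SG_n$ using the rule $u(-k)=-u(k)$. Once the explicit permutation formula for $\alpha_w$ is in hand, each of parts (a)--(d) becomes a short and direct calculation.
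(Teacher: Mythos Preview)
Your proof is correct. The overall strategy---using the permutation characterization from Lemma~\ref{decomposition} together with uniqueness and the centrality of $w_n$---matches the paper's, and your reduction of the product identity to $a_na_l=\sigma_{n-l,l}\,a_{n-l}\,\sigma_n$ is equivalent to the paper's algebraic manipulation in its step~(3).

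There is one structural difference worth noting. The paper proves (b) and (c) simultaneously: it \emph{defines} $\alpha'=\alpha_w\sigma_n\sigma_{n-l,l}$, $\beta'=\beta_w\sigma_n\sigma_{n-l,l}$, $\sigma'=\sigma_n\sigma_w\sigma_n^{-1}\sigma_{n-l,l}$, then checks that $\alpha',\beta'\in Y_{n-l,l}$ (via Lemma~\ref{longueur}), that $\sigma'\in\SG_{n-l,l}$, and that the product equals $w_nw$; uniqueness then gives (b) and (c) at once. You instead establish (b) first and independently, by extracting the explicit permutation formula $\alpha_w(m)=i_m$ or $j_{m-l}$ from (d) and \ref{ri1 ril}, and comparing directly with $\alpha_w\sigma_n\sigma_{n-l,l}$. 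This has the advantage that you never need to verify membership in $Y_{n-l,l}$ by hand (it is automatic, since $\alpha_{w_nw}$ is by definition the $Y$-component of the decomposition of $w_nw$), at the cost of a slightly more concrete computation. Both routes are short; yours leans more on the permutation model, the paper's more on the abstract Coxeter structure.
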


\bigskip

\begin{proof}
(a) is clear. (d) follows from Lemma \ref{decomposition}. 
We now prove (b) and (c) simultaneously. 
For this, let $\a'=\a_w \s_n \s_{n-l,l}$, $\b'=\b_w \s_n \s_{n-l,l}$ 
and $\s'=\s_n \s_w \s_n^{-1} \s_{n-l,l}$. By the unicity statement 
of the Lemma \ref{decomposition}, we only need to show the following 
three properties:
\begin{quotation}
\begin{itemize}
\itemth{1} $\a'$, $\b' \in Y_{n-l,l}$.

\itemth{2} $\s' \in \SG_{n-l,l}$.

\itemth{3} $w_n w= \a' a_{n-l} \s' \b^{\prime -1}$.
\end{itemize}
\end{quotation}
For this, note first 
$$\s_n \SG_{l,n-l} \s_n^{-1} = \SG_{n-l,l},$$
so that (2) follows immediately. This also implies that 
$\s_n \s_{n-l,l} \s_n^{-1}=\s_{l,n-l}$ because conjugacy by 
$\s_n$ in $\SG_n$ preserves the length. 

\medskip

Let us now show (1). Let $i \in \{1,2,\dots,n\}\setminus\{n-l\}$. 
We want to show that $\ell(\a' s_i) > \ell(\a')$. By Lemma \ref{longueur}, 
this amounts to show that $\a'(i+1) > \a'(i)$. But 
$\a'=\a_w \s_{l,n-l} \s_n$. Also 
$\s_n(i) =n+1-i > \s_n(i+1)=n-i$ and $n+1-i$ and $n-i$ both belong to 
the same interval $[1,l]$ or $[l+1,n]$. 
Hence $\s_{l,n-l}\s_n(i) < \s_{l,n-l}\s_n(i+1)$ and 
$\a_w\s_{l,n-l}\s_n(i) < \a_w\s_{l,n-l}\s_n(i+1)$ since $\a_w \in Y_{l,n-l}$. 
This shows that $\a' \in Y_{n-l,l}$. Similarly, $\b' \in Y_{n-l,l}$. 
So (1) is proved.

\medskip

It remains to show (3). We have
\eqna
\a' a_{n-l} \s' \b^{\prime -1} &=& 
(\a_w \s_n \s_{n-l,l}) \cdot a_{n-l} \cdot (\s_n \s_w \s_n^{-1} \s_{n-l,l}) 
\cdot (\s_{n-l,l}^{-1} \s_n^{-1} \b_w^{-1}) \\
&=& \a_w \s_n \s_{n-l,l} a_{n-l} \s_n \s_w \b_w^{-1}
\endeqna
But $\s_{n-l,l}=\s_{[n-l+1,n]} \s_{n-l}$ and 
$\s_n \s_{[n-l+1,n]} \s_n^{-1} = \s_{[1,l]}=\s_l$ so 
\eqna
\a' a_{n-l} \s' \b^{\prime -1} &=& \a_w \s_l \s_n \s_{n-l}a_{n-l}\s_n^{-1} 
\s_w \b_w^{-1} \\
&=& \a_w \s_l \s_n w_{n-l}\s_n^{-1} \s_w \b_w^{-1},
\endeqna
the last equality following from \ref{wn an sn}. Now, 
$\s_n w_{n-l}\s_n^{-1} = w_l w_n$ (see again \ref{wn an sn}) so 
\eqna
\a' a_{n-l} \s' \b^{\prime -1} &=& \a_w \s_l w_l w_n\s_w \b_w^{-1}\\
&=& \a_w a_l w_n \s_w \b_w^{-1}\\
&=& w_n \a_w a_l \s_w \b_w^{-1}= w_n w,
\endeqna
the second equality following from \ref{wn an sn} and the third one 
from the fact that $w_n$ is central (see \ref{centre}).
\end{proof}

\bigskip

\soussection{Subgroups ${\boldsymbol{W_m}}$ of ${\boldsymbol{W_n}}$} 
If $m \le n$, we shall view $W_m$ naturally as a subgroup of $W_n$ 
(the pointwise stabilizer of $[m+1,n]$). It is the 
standard parabolic subgroup generated 
by $S_m=\{t,s_1,\dots,s_{m-1}\}$: we denote by $X_n^{(m)}$ the 
set of $w \in W_n$ which are of minimal length in $wW_m$. 
For simplification, we set $X_n=X_n^{(n-1)}$. It follows from 
Lemma \ref{longueur} that:

\bigskip

\begin{lem}\label{caracterisation X}
Let $w$ be an element of $W_n$. Then $w$ belongs to $X_n^{(m)}$ 
if and only if $0 < w(1) < w(2) < \cdots < w(m)$.
\end{lem}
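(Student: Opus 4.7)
The plan is to invoke the standard characterization of minimal-length coset representatives: for any parabolic subgroup generated by a set $S_m$ of simple reflections, an element $w$ is minimal in $w W_m$ if and only if $\ell(ws) > \ell(w)$ for every $s \in S_m$. Applying this to $W_m = \langle t, s_1,\dots,s_{m-1}\rangle$, we see that $w \in X_n^{(m)}$ if and only if
\[
\ell(wt) > \ell(w) \quad\text{and}\quad \ell(ws_i) > \ell(w) \text{ for all } i\in\{1,\dots,m-1\}.
\]

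Next I would translate each of these length inequalities via Lemma \ref{longueur}. Since $t = t_1$, part (b) of that lemma says $\ell(wt) > \ell(w)$ is equivalent to $w(1) > 0$. Part (a) says $\ell(ws_i) > \ell(w)$ is equivalent to $w(i) < w(i+1)$, for each $i = 1, \dots, m-1$. Combining these conditions gives exactly $0 < w(1) < w(2) < \cdots < w(m)$, which is the desired characterization.

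There is no genuine obstacle here: the lemma is a direct corollary of the standard Exchange/Deletion-based description of distinguished coset representatives together with the combinatorial length criteria already recorded in Lemma \ref{longueur}. The only small point worth stating explicitly in the write-up is that $t = t_1$ under the notational convention $t_1 = r_1 = t$ introduced earlier, so that Lemma \ref{longueur}(b) does apply to the generator $t$ of $W_m$.
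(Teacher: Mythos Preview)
Your proof is correct and is exactly the argument the paper has in mind: the lemma is stated immediately after ``It follows from Lemma \ref{longueur} that:'' with no further proof given, and your write-up simply unpacks that reference via the standard description of distinguished coset representatives.
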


\bigskip

If $I=\{i_1,\dots,i_l\} \subseteq [1,n-1]$ with $i_1 < \cdots < i_l$, 
then we set
$$c_I=s_{i_1}s_{i_2}\cdots s_{i_l}\quad\text{and}\quad 
d_I=s_{i_l} \cdots s_{i_2}s_{i_1}.$$
By convention, $c_\vide=d_\vide=1$. We have
\equat\label{xn}
X_n=\{c_{[i,n-1]}~|~1 \le i \le n\} \hskip1mm\dot{\cup}\hskip1mm
\{d_{[1,i]}tc_{[1,n-1]}~|~0 \le i \le n-1\}.
\endequat

\bigskip

\soussection{Hecke algebra} 
We fix a totally ordered abelian group $\G$ (denoted additively) 
and a weight function 
$\ph : W_n \to \G$. We set 
$$\ph(t)=b\quad\text{and}\quad 
\ph(s_1)=a \quad(=\ph(s_2)=\cdots=\ph(s_{n-1})).$$
Note that
\equat\label{phi w}
\ph(w)=\ell_t(w) b + \ell_s(w) a
\endequat
for all $w \in W_n$. 

We denote by $A$ the group algebra $\ZM[\G]$. We shall use 
an exponential notation: $A=\DS{\mathop{\oplus}_{\g \in \G}} \ZM e^\g$, 
where $e^\g\cdot e^{\g'}=e^{\g+\g'}$ for all $\g$, $\g' \in \G$. 
We set 
$$Q=e^b\quad\text{and}\quad q=e^a.$$
Note that $Q$ and $q$ are not necessarily algebraically independent.
We set
$$A_{<0}=\mathop{\oplus}_{\g < 0} \ZM e^\g,$$
and we define similarly $A_{\leqslant 0}$, $A_{> 0}$ and 
$A_{\geqslant 0}$. 

\medskip

We shall denote by $\HC_n$ the {\it Hecke algebra of $W_n$ with 
parameter $\ph$}: it is the free $A$-module with basis $(T_w)_{w \in W_n}$ 
and the multiplication is $A$-bilinear and is completely 
determined by the following rules:
$$\begin{cases}
T_w T_{w'} = T_{ww'} & \text{if $\ell(ww')=\ell(w)+\ell(w')$,}\\
(T_t - Q)(T_t+Q^{-1})=0, & \\
(T_{s_i}-q)(T_{s_i}+q^{-1}) = 0 & \text{if $1 \le i \le n-1$.}
\end{cases}$$
We also set
$$\HC_n^{<0}=\mathop{\oplus}_{w \in W_n} A_{<0} T_w.$$
Finally, we denote by $\overline{\vphantom{A}~} : \HC_n \to \HC_n$ 
the unique $A$-semilinear involution of $\HC_n$ such that 
$\overline{e^\g}=e^{-\g}$ and $\overline{T}_w=T_{w^{-1}}^{-1}$ for 
all $\g \in \G$ and $w \in W_n$. 

\bigskip

\soussection{Kazhdan-Lusztig basis} 
We shall recall here the basic definitions of Kazhdan-Lusztig theory. 
If $w \in W_n$, then \cite[Theorem 5.2]{lusztig} 
there exists a unique $C_w \in \HC_n$ such that
$$\begin{cases}
\overline{C}_w=C_w & \\
C_w \equiv T_w \mod \HC_n^{<0}.
\end{cases}$$
Note that \cite[\S 5.3]{lusztig}
\equat\label{bruhat}
C_w - T_w \in \mathop{\oplus}_{w' < w} A_{<0} T_{w'},
\endequat
where $\le$ denotes the Bruhat order on $W_n$. 
In particular, $(C_w)_{w \in W_n}$ is an $A$-basis of $\HC_n$, 
called the {\it Kazhdan-Lusztig basis} of $\HC_n$. 

\bigskip

\soussection{Cells} 
If $x$, $y \in W_n$, then we shall write 
$x \gauche y$ (resp. $x \droite y$, resp. $x \bilatere y$) if there exists 
$h \in \HC_n$ such that the coefficient of $C_x$ in the decomposition 
of $hC_y$ (resp. $C_y h$, resp. $hC_y$ or $C_y h$) is non-zero. 
We denote by $\pre{L}$ 
(resp. $\pre{R}$, resp. $\pre{LR}$) the transitive closure of $\gauche$ 
(resp. $\droite$, resp. $\bilatere$). 
Then $\pre{L}$, $\pre{R}$ and $\pre{LR}$ are preorders on $W_n$ 
and we denote respectively by $\sim_L$, $\sim_R$ and $\sim_{LR}$ the associated 
equivalence relations \cite[Chapter 8]{lusztig}. An equivalence class 
for $\sim_L$ (resp. $\sim_R$, resp. $\sim_{LR}$) is called 
a {\it left} (resp. {\it right}, resp. {\it two-sided}) {\it cell}. 
We recall the following result \cite[\S 8.1]{lusztig}: 
if $x$, $y \in W_n$, then
\equat\label{siml simr}
x \sim_L y \Longleftrightarrow x^{-1} \sim_R y^{-1}.
\endequat

\bigskip

\soussection{Domino insertion} 
If $r \ge 0$ and $w \in W_n$, then the domino insertion algorithm 
(see \cite{lam}, \cite{vanl}, \cite{white}) 
into the $2$-core $\d_r=(r,r-1,\dots,2,1)$ associates 
to $w$ a standard domino tableau $D_r(w)$ (with $n$ dominoes, 
filled with $\{1,2,\dots,n\}$). If $D$ is a domino tableau, we denote 
by $\shape(D)$ its {\it shape}: we shall denote by $\shape_r(w)$ 
the shape of $D_r(w)$ (which is equal to the shape of $D_r(w^{-1})$, 
loc. cit.). 

If $x$ and $y \in W_n$ we shall write $x \sim_L^r y$ 
(resp. $x \sim_R^r y$, resp. $x \sim_{LR}^r y$) 
if $D_r(x^{-1})=D_r(y^{-1})$ 
(resp. $D_r(x)=D_r(y)$, resp. $\shape_r(x)=\shape_r(y)$). 
These are equivalence relations on $W_n$. Note that 
$\sim_{LR}^r$ is the equivalence relation generated by 
$\sim_L^r$ and $\sim_R^r$. 

We denote by 
$\approx_L^{r+1}$ (resp. $\approx_R^{r+1}$, resp. $\approx_{LR}^{r+1}$) 
the equivalence relation generated by $\sim_L^r$ and $\sim_L^{r+1}$ 
(resp. $\sim_R^r$ and $\sim_R^{r+1}$, resp. 
$\sim_{LR}^r$ and $\sim_{LR}^{r+1}$). 
Recall the following conjecture from \cite[Conjectures A and B]{bgil}:

\bigskip

\begin{quotation}
\noindent{\bf Conjecture.} 
{\it Assume that $a$, $b > 0$. Let $r \ge 0$ and $? \in \{L,R,LR\}$.
\begin{itemize}
\itemth{a} If $ra < b < (r+1)a$, then the relations 
$\sim_?$ and $\sim_?^r$ coincide.

\itemth{b} If $r\ge 1$ and $b=ra$, then the relations 
$\sim_?$ and $\approx_?^r$ coincide.
\end{itemize}}
\end{quotation}

\bigskip

The main result of this paper is the following 
partial result towards the previous conjecture:

\bigskip

\begin{theo}\label{main}
Assume that $a$, $b > 0$. Let $r \ge 0$, $? \in \{L,R,LR\}$ and 
$x$, $y \in W_n$. Then:
\begin{itemize}
\itemth{a} If $ra < b < (r+1)a$ and $x \sim_?^r y$, then 
$x \sim_? y$.

\itemth{b} If $r \ge 1$, $b=ra$ and $x \approx_?^r y$, then 
$x \sim_? y$.
\end{itemize}
\end{theo}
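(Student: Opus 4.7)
The plan is to reduce the theorem to showing that elementary plactic moves preserve Kazhdan--Lusztig cells, and then to verify this preservation by explicit computations inside $\HC_n$.

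First I would reduce to the right-cell case. Since $\sim_{LR}$ is generated by $\sim_L$ and $\sim_R$ (and likewise $\sim_{LR}^r$ is generated by $\sim_L^r$ and $\sim_R^r$, and $\approx_{LR}^{r+1}$ by $\approx_L^{r+1}$ and $\approx_R^{r+1}$), it suffices to handle $? = L$ and $? = R$; but \ref{siml simr} together with the well-known identity $D_r(w)=D_r(w^{-1})$-transposed that gives $x \sim_R^r y \Longleftrightarrow x^{-1} \sim_L^r y^{-1}$ shows that the two are equivalent. So I only need to prove: under $ra<b<(r+1)a$, the relation $\sim_R^r$ is contained in $\sim_R$; and under $b=ra$, the relation $\approx_R^r$ is contained in $\sim_R$.

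By the Taskin--Pietraho description of the fibers of domino insertion into $\d_r$, the equivalence relation $\sim_R^r$ is generated by a short, explicit list of \emph{elementary plactic moves}: local substitutions on one-line notations of elements of $W_n$, parameterized by $r$. It is therefore enough to show, for each such elementary move relating $x$ and $y$, that $x\sim_R y$ as Kazhdan--Lusztig cells. Concretely I would produce $h, h' \in \HC_n$ (each a Kazhdan--Lusztig basis element $C_w$ for a short $w$, or a small product of such) so that $C_x$ appears with nonzero coefficient in the expansion of $C_y h$ in the basis $(C_z)$, and symmetrically $C_y$ appears in $C_x h'$; this yields $x \pre{R} y \pre{R} x$. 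The computations rely on \ref{bruhat} together with the standard multiplication rules for $C_w T_s$ and $C_w C_s$, while tracking the monomials $e^{\pm kb\pm \ell a}$ that arise: the strict inequality $ra<b<(r+1)a$ is exactly what is needed to guarantee that no unwanted cancellation kills the leading coefficient of $C_x$ in $C_y h$ (a single exponential $e^\gamma$ with $\gamma$ determined by $b-ra$ or $(r+1)a-b$ is picked out as the dominant term). These computations form the content of Propositions \ref{exemples m1} and \ref{exemples m2}, and are then packaged as cell statements in Propositions \ref{quasi asymptotique} and \ref{coro exemples m2}.

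The global combinatorial packaging that turns single-move statements into the full theorem should be routine type-$B$ bookkeeping: using the decomposition $w=\a_w a_{\ell_t(w)} \s_w \b_w^{-1}$ from Lemma \ref{decomposition} to localize plactic moves inside a parabolic $W_m$, exploiting the $w_n$-symmetry of Proposition \ref{mult w0} to interchange the roles of $\ell_t=l$ and $\ell_t=n-l$, and combining with the compatibility of Kazhdan--Lusztig cells with parabolic induction. Part (b) should then follow from (a) by a semicontinuity argument: $\approx_R^r$ is the join of $\sim_R^{r-1}$ and $\sim_R^r$, which by (a) are Kazhdan--Lusztig cell relations for parameters just above and just below the wall $b=ra$, and by \cite[Corollary 5.8]{semicontinu} cells at a wall are unions of cells on either side, so both collapse into $\sim_R$ when $b=ra$. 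The principal obstacle will be the Hecke algebra bookkeeping in Propositions \ref{exemples m1} and \ref{exemples m2}: the lists of plactic moves produced by Taskin and Pietraho are not short, and for each one the strict parameter inequality $ra<b<(r+1)a$ must be traced through every cancellation to extract the required leading $C_x$ coefficient.
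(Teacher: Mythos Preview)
Your overall architecture for part (a) is essentially the paper's: reduce to $?=R$, invoke Taskin's generators $\smile_1$, $\smile_2^r$, $\smile_3^r$, and check each preserves $\sim_R$ using the computations of Propositions \ref{exemples m1}, \ref{exemples m2} and their corollaries \ref{quasi asymptotique}, \ref{coro exemples m2}, with parabolic reduction and the $w_n$-symmetry of Proposition \ref{mult w0} as bookkeeping tools. That part is fine.

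The genuine gap is your treatment of part (b). You propose to deduce it from (a) via semicontinuity: $\approx_R^r$ is generated by $\sim_R^{r-1}$ and $\sim_R^r$, each of which is contained in the Kazhdan--Lusztig $\sim_R$ for parameters just off the wall, and then ``cells at a wall are unions of cells on either side''. But that last step is exactly the semicontinuity \emph{conjecture}; it is not a theorem, and \cite[Corollary 5.8]{semicontinu} does not supply it (that reference concerns only the reduction to positive parameters). So your argument for (b) rests on an unproven statement.

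The paper avoids this entirely. Instead of proving the three lemmas under the strict hypothesis $ra<b<(r+1)a$, it proves them under the \emph{closed} inequalities: Lemma \ref{2} assumes only $b\ge ra$, and Lemma \ref{3} assumes only $b\le (r+1)a$. Then at the wall $b=ra$ one has simultaneously $b\ge ra$ (so $\smile_2^r$ is handled) and $b\le ((r-1)+1)a$ (so $\smile_3^{r-1}$ is handled), and Taskin's description of $\approx_R^r$ as generated by $\smile_1$, $\smile_2^r$, $\smile_3^{r-1}$ finishes (b) directly. Your remark that ``the strict inequality $ra<b<(r+1)a$ is exactly what is needed to guarantee no unwanted cancellation'' is therefore misleading: the point of Corollary \ref{mt} and of the computation in \S\ref{M2} is precisely that the relevant coefficients survive at the boundary values as well. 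Sharpening the Hecke-algebra estimates to the closed range, not invoking semicontinuity, is what makes (b) go through.
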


\bigskip

The other sections of this paper are devoted to the proof of this 
theorem. 

\bigskip

\noindent{\sc Comments - } If one assumes Lusztig's Conjectures 
{\bf P1}, {\bf P2},\dots, {\bf P15} in \cite[Chapter 14]{lusztig}, 
then Theorem \ref{main} implies that the statement (a) of the Conjecture 
is true. Indeed, Lusztig's Conjectures imply in this case that 
the left cell representations are irreducible, and one can conclude 
by a counting argument. It might be probable that a similar 
argument applies for the statement (b), using results of 
Pietraho \cite{pietraho 2}: however, we are not able to do it.

\medskip

In the case where $b > (n-1) a$, Theorem \ref{main} was 
proved in \cite[Theorem 7.7]{lacriced} (in fact, the conjecture 
was also proved) by using a counting argument. The proof here 
does not make use of the counting argument.\finl

\bigskip

\section{Kazhdan-Lusztig polynomials, structure constants}

\medskip

\begin{quotation}
\noindent{\bf Hypothesis and notation.} 
{\it From now on, and until the end of this paper, we assume that 
$a$, $b$ are positive. Recall that $Q=e^b$ and $q=e^a$, so that 
$\ZM[Q,Q^{-1},q,q^{-1}] \subseteq A$. If $p \in A_{\ge 0}$, we 
denote by $\t_A(p)$ the coefficient of $1=e^0$ in the expansion 
of $p$ in the basis $(e^\g)_{\g \in \G}$.}
\end{quotation}

\bigskip

\soussection{Recollection of general facts} 
If $x$ and $y$ are elements of $W_n$, we set
$$C_x C_y=\sum_{z \in W_n} h_{x,y,z} C_z,$$
where the $h_{x,y,z}$ belong to $A$ and satisfy
$$\overline{h_{x,y,z}}=h_{x,y,z}.$$
We also set 
$$C_y=\sum_{x \in W_n} p_{x,y}^* T_x\quad
\text{and}\quad p_{x,y}=e^{\ph(y)-\ph(x)} p_{x,y}^*.$$
Recall \cite[Proposition 5.4]{lusztig} that 
\equat\label{kl pol}
\begin{cases}
p_{y,y}^*=p_{y,y}=1, &\\
p_{x,y}^* \in A_{< 0} & \text{if $x \neq y$.}\\
p_{x,y}^*= p_{x,y} =0 & \text{if $x \not\leqslant y$,}\\
p_{x,y} \in  A_{\geqslant 0},&\\
\t_A(p_{x,y}) = 1 & \text{if $x \le y$.}\\
\end{cases}
\endequat
Now, if $s \in S_n$, Lusztig \cite[Proposition 6.3]{lusztig} 
has defined inductively a family of polynomials 
$(M_{x,y}^s)_{sx < x < y < sy}$ by the following properties:
\refstepcounter{theo}$$\overline{M_{x,y}^s} = M_{x,y}^s,
\leqno{\boldsymbol{(\arabic{section}.\arabic{theo}\ab)}}\label{a}$$
$$M_{x,y}^s + 
\sum_{\substack{x < z < y\\ sz < z}} p_{x,z}^* M_{z,y}^s - 
e^{\ph(s)} p_{x,y}^* \in A_{<0}.
\leqno{\boldsymbol{(\arabic{section}.\arabic{theo}\bb)}}\label{b}$$
With this notation, we have \cite[Theorem 6.6]{lusztig}:

\bigskip

\begin{theo}[Kazhdan-Lusztig, Lusztig]\label{kl plus}
Let $s \in S_n$ and let $y \in W_n$. Then:
\smallskip
\begin{itemize}
\itemth{a} $C_s C_y = \begin{cases}
C_{sy} + \DS{\sum_{sx < x < y}} M_{x,y}^s C_x & \text{if $sy > y$,}\\
(e^{\ph(s)}+e^{-\ph(s)})~C_y & \textit{if $sy < y$.}\\
\end{cases}$
\smallskip
\itemth{b} If $sy < y$, and if $x \le y$, then 
$$p_{x,y}=\begin{cases}
q^2 p_{x,sy} + p_{sx,sy} - 
\DS{\sum_{\substack{x \leqslant z < sy\\ sz < z}}} 
e^{\ph(y)-\ph(z)} p_{x,z} M_{z,sy}^s & \text{if $sx < x$,}\\
p_{sx,y} & \text{if $sx > x$},
\end{cases}$$
$$p_{x,y}^*=\begin{cases}
q p_{x,sy}^* + p_{sx,sy}^* - 
\DS{\sum_{\substack{x \leqslant z < sy\\ sz < z}}} 
p_{x,z}^* M_{z,sy}^s & \text{if $sx < x$,}\\
e^{-\ph(s)} p_{sx,y}^* & \text{if $sx > x$.}
\end{cases}\leqno{\text{\it and}}$$
\end{itemize}
\end{theo}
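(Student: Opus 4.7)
The plan is a joint induction on $\ell(y)$, resting on three ingredients: the explicit form $C_s = T_s + e^{-\ph(s)}$ for a simple reflection (verified directly from bar-invariance and the triangularity condition), the quadratic relation $T_s^2 = 1 + (e^{\ph(s)}-e^{-\ph(s)}) T_s$ which yields $C_s^2 = (e^{\ph(s)}+e^{-\ph(s)})\,C_s$, and the uniqueness of the Kazhdan--Lusztig basis (any bar-invariant element in $T_w + \HC_n^{<0}$ equals $C_w$). The base case $\ell(y)=0$ is trivial.

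The heart of the argument is part (a) in the case $sy>y$. I would set
$$h := C_s C_y - C_{sy} - \sum_{sx<x<y} M_{x,y}^s\, C_x,$$
which is bar-invariant (each $C_w$ by definition, each coefficient $M_{x,y}^s$ by property (\ref{a})). By uniqueness it suffices to show $h \in \HC_n^{<0}$. Expanding in the $T$-basis, using $T_s T_w = T_{sw}$ when $sw>w$ and $T_s T_w = T_{sw} + (e^{\ph(s)}-e^{-\ph(s)})T_w$ when $sw<w$, one computes
$$[T_z]\,(C_s C_y) \;=\; p_{sz,y}^* + \bigl(e^{-\ph(s)} + \delta_{sz<z}(e^{\ph(s)}-e^{-\ph(s)})\bigr)\, p_{z,y}^*.$$
The $T_{sy}$-coefficient of $h$ vanishes (the leading term $T_{sy}$ cancels against $C_{sy}$, and $p_{sy,x}^*=0$ for $x<y$), and for $z$ with $sz<z<y$ the requirement $[T_z]\,h \in A_{<0}$ becomes precisely the defining congruence (\ref{b}) of $M_{z,y}^s$, modulo a residual term involving $p_{z,sy}^*$ controlled by the joint induction via bar-invariance of $C_{sy}$. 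The remaining ranges for $z$ (either $sz>z$, or $z$ not below $y$) use only the support and positivity facts from (\ref{kl pol}).

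For part (a) case $sy < y$, set $y' = sy$, so $\ell(y')<\ell(y)$ and $sy'>y'$; the case just proved applied to $y'$ gives
$$C_s C_{y'} \;=\; C_y + \sum_{sz<z<y'} M^s_{z,y'}\, C_z.$$
Multiplying on the left by $C_s$ and invoking $C_s^2 = (e^{\ph(s)}+e^{-\ph(s)})\,C_s$, together with the induction hypothesis (which for each $z$ occurring satisfies $\ell(z)<\ell(y)$ and $sz<z$, hence yields $C_s C_z = (e^{\ph(s)}+e^{-\ph(s)})\,C_z$), a direct rearrangement produces $C_s C_y = (e^{\ph(s)}+e^{-\ph(s)})\,C_y$.

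Part (b) then follows by extracting $T_x$-coefficients from consequences of (a). When $sx>x$, the identity $T_s C_y = e^{\ph(s)} C_y$ (immediate from $C_s C_y = (e^{\ph(s)}+e^{-\ph(s)})\,C_y$) gives $p_{sx,y}^* = e^{\ph(s)} p_{x,y}^*$, that is $p_{x,y}^* = e^{-\ph(s)} p_{sx,y}^*$. When $sx<x$, rewrite (a) applied to $sy$ as
$$C_y \;=\; C_s C_{sy} - \sum_{sz<z<sy} M^s_{z,sy}\, C_z,$$
and extract $[T_x]$ via the $T$-basis expansion of $C_s C_{sy}$ computed in part (a); the displayed recursion falls out. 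The formulas for $p_{x,y}$ follow from those for $p_{x,y}^*$ through $p_{x,y} = e^{\ph(y)-\ph(x)} p_{x,y}^*$ together with $\ph(sx)=\ph(x)\pm\ph(s)$ and $\ph(sy)=\ph(y)-\ph(s)$. The chief obstacle is the $A_{<0}$-verification of $[T_z]\,h$ in part (a) case $sy>y$: in the principal range $sz<z<y$ it collapses to the defining property of $M_{z,y}^s$, but the residual entanglement with $p_{z,sy}^*$ is what forces the joint induction and the use of the bar-invariance of $C_{sy}$.
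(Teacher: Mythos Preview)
The paper does not give its own proof of this theorem: it is quoted verbatim from Lusztig's monograph \cite[Theorem 6.6]{lusztig}, so there is no in-paper argument to compare against. Your sketch is essentially the standard Lusztig proof (construct the bar-invariant element, verify it lies in $T_{sy}+\HC_n^{<0}$, then invoke uniqueness), and the overall architecture is sound.

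There is, however, one inaccuracy worth flagging. In part (a) with $sy>y$, you claim that for $z$ with $sz>z$ the verification of $[T_z]\,h\in A_{<0}$ ``uses only the support and positivity facts from (\ref{kl pol}).'' This is not quite true: the problematic term is $\sum_{sx<x<y} M_{x,y}^s\,p_{z,x}^*$, where $p_{z,x}^*\in A_{<0}$ but $M_{x,y}^s$ may have strictly positive degrees (it is only bounded by $e^{\ph(s)}$), so the product is not obviously in $A_{<0}$. The clean fix is precisely your joint induction: since each such $x$ satisfies $sx<x$ and $\ell(x)<\ell(y)$, part (b) applied inductively to $x$ gives $p_{z,x}^* = e^{-\ph(s)} p_{sz,x}^*$; substituting this yields
\[
[T_z]\,h \;=\; e^{-\ph(s)}\,[T_{sz}]\,h \;+\; \bigl(-p_{z,sy}^* + e^{-\ph(s)} p_{sz,sy}^*\bigr),
\]
and now both pieces on the right lie in $A_{<0}$ (the first because $[T_{sz}]\,h\in A_{<0}$ from the case $s(sz)<sz$ already handled, the second straight from (\ref{kl pol})). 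So the induction hypothesis is genuinely needed here, not merely the static bounds. Your vague phrase ``controlled by the joint induction via bar-invariance of $C_{sy}$'' also slightly misidentifies the mechanism: the term $p_{z,sy}^*$ is in $A_{<0}$ simply because $C_{sy}$ exists (Theorem 5.2 of \cite{lusztig}), with no induction required for that part.
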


\bigskip

\begin{coro}\label{kl}
If $s$, $s' \in \{s_1,\dots, s_{n-1}\}$ and $x$, $y \in W_n$ 
are such that $sx < x < s'x=y < sy$, then $x \sim_L y$.
\end{coro}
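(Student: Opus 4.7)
The plan is to establish $y \pre{L} x$ and $x \pre{L} y$ separately, each via the multiplication formulas in Theorem \ref{kl plus}. For the first relation I apply Theorem \ref{kl plus}(a) to the simple reflection $s'$ and the element $x$: since $s'x = y > x$,
$$C_{s'} C_x = C_y + \sum_{s'z < z < x} M_{z,x}^{s'}\, C_z,$$
so $C_y$ occurs in $C_{s'} C_x$ with coefficient $1$, whence $y \pre{L} x$.

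For the reverse relation I apply Theorem \ref{kl plus}(a) to $s$ and $y$: since $sy > y$,
$$C_s C_y = C_{sy} + \sum_{sz < z < y} M_{z,y}^s\, C_z.$$
Because $sx < x$ and $x < y$, the term $M_{x,y}^s\, C_x$ appears in this sum, so it suffices to show $M_{x,y}^s \neq 0$. For this I use the defining property (\ref{b}) of $M_{x,y}^s$. The key observation is that $y = s'x$ with $\ell(y) = \ell(x) + 1$, which makes $(x,y)$ a Bruhat cover: no $z$ satisfies $x < z < y$. Hence the summation in (\ref{b}) is empty and
$$M_{x,y}^s - e^{\ph(s)} p_{x,y}^* \in A_{<0}.$$

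To finish, I compute $p_{x,y}^*$ using Theorem \ref{kl plus}(b) applied to the reflection $s'$ and the element $y$ (noting $s'y = x < y$): since $s'x > x$, the recursion yields $p_{x,y}^* = e^{-\ph(s')} p_{s'x,y}^* = e^{-a} p_{y,y}^* = e^{-a}$. Because $\ph(s) = a$, this gives $e^{\ph(s)} p_{x,y}^* = 1$, so $M_{x,y}^s - 1 \in A_{<0}$. Combining this with the bar-invariance of $M_{x,y}^s$ (property (\ref{a})) and of $1$, the bar involution sends $M_{x,y}^s - 1$ to itself and into $A_{>0}$; hence $M_{x,y}^s - 1 \in A_{<0} \cap A_{>0} = \{0\}$ and $M_{x,y}^s = 1 \neq 0$. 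The only point requiring care is the vanishing of the intermediate sum in (\ref{b}), which reduces to the standard fact that multiplication by a simple reflection with length increase exactly $1$ produces a Bruhat cover; the rest is a direct computation with the recursive formulas of Theorem \ref{kl plus}.
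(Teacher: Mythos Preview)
Your argument is correct. The paper itself gives no proof here but simply cites \cite[Proposition~5~(b)]{lusztig left}; your write-up is essentially a self-contained reconstruction of that standard argument using only the tools already recalled in this paper (Theorem~\ref{kl plus} and the defining properties (\ref{a}), (\ref{b}) of the $M$-polynomials). The crucial point---that $s$ and $s'$ both lie in $\{s_1,\dots,s_{n-1}\}$ and hence carry the same weight $a$, so that $e^{\ph(s)}p_{x,y}^*=e^a\cdot e^{-a}=1$---is exactly what makes the computation go through, and you use it correctly.
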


\bigskip

\begin{proof}
See \cite[Proposition 5 (b)]{lusztig left}.
\end{proof}

\bigskip

\soussection{Special features in type ${\boldsymbol{B}}$} 
The previous results of this section hold for any Coxeter group 
(finite or not). In this subsection, we shall investigate 
what is implied by the structure of $W_n$. The particular 
ingredient we shall need is the following lemma \cite[\S 4]{lacriced}:

\bigskip

\begin{lem}\label{al min}
$\{a_l~|~0 \le l \le n\}$ is the set of elements $w\in W_n$ which 
have minimal length in $\SG_n w \SG_n$. If $x < a_l$ for some 
$l \in \{1,2,\dots,n\}$ and some $x \in W_n$, then 
$\ell_t(x) < \ell_t(a_l)=l$. 
\end{lem}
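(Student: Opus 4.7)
The plan is to reduce both statements to direct applications of Lemma~\ref{decomposition}, which we will use to parameterise the double cosets $\SG_n\backslash W_n/\SG_n$.

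For the first statement I would proceed in two steps. First, for any $w\in W_n$, Lemma~\ref{decomposition} supplies the decomposition $w=\a_w\,a_{\ell_t(w)}\,\s_w\,\b_w^{-1}$ in which all three factors $\a_w,\b_w\in Y_{\ell_t(w),n-\ell_t(w)}$ and $\s_w\in\SG_{\ell_t(w),n-\ell_t(w)}$ lie in $\SG_n$, together with the length-additivity $\ell(w)=\ell(\a_w)+\ell(a_{\ell_t(w)})+\ell(\s_w)+\ell(\b_w)$. Thus $w\in\SG_n\,a_{\ell_t(w)}\,\SG_n$ and $\ell(w)\ge \ell(a_{\ell_t(w)})=\ell_t(w)(\ell_t(w)+1)/2$ by \ref{l al}. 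Second, I would check that $\ell_t$ is invariant under left and right multiplication by $\SG_n$: using the description $\ell_t(w)=|\{i\in[1,n]:w^{-1}(i)<0\}|$ given in Lemma~\ref{decomposition}, together with the characterisation \ref{sn} of $\SG_n$ as the stabiliser of $I_n^+$ (and hence also of $-I_n^+$), neither left nor right multiplication of $w$ by an element of $\SG_n$ alters this cardinality. Combining these two observations shows that the double cosets $\SG_n\,a_l\,\SG_n$ for $0\le l\le n$ are pairwise distinct, exhaust $W_n$, and $a_l$ has minimal length in its own coset.

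For the second statement, suppose $x<a_l$ in the Bruhat order, so $\ell(x)<\ell(a_l)=l(l+1)/2$, and set $l'=\ell_t(x)$. Applying the first statement to $x$ yields $\ell(x)\ge \ell(a_{l'})=l'(l'+1)/2$, and the strict inequality $l'(l'+1)/2<l(l+1)/2$ immediately forces $l'<l$. I do not foresee a serious obstacle in this lemma: the whole argument is a bookkeeping consequence of Lemma~\ref{decomposition} together with the length formula \ref{l al}. The only slightly delicate point is the bi-invariance of $\ell_t$ under $\SG_n$, and even this is transparent once one observes, via \ref{sn}, that elements of $\SG_n$ preserve the sign on $I_n$.
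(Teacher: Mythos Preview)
Your argument is correct. The paper does not supply its own proof of this lemma, merely citing \cite[\S 4]{lacriced}; your approach of deducing both assertions from Lemma~\ref{decomposition} (itself taken from the same reference) is natural and works cleanly. One small remark: in the second statement you only use that $x<a_l$ implies $\ell(x)<\ell(a_l)$, so you have in fact proved the marginally stronger assertion that $\ell(x)<\ell(a_l)$ already forces $\ell_t(x)<l$; the Bruhat hypothesis is not used beyond the length drop.
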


\bigskip

It has the following consequences (here, if $p \in \ZM[q]$, 
we denote by $\deg_q p$ its degree in the variable $q$):

\bigskip

\begin{coro}\label{pxy}
Let $x$ and $y$ be two elements of $W_n$ such that 
$x \le y$ and $\ell_t(x)=\ell_t(y)$. Then:
\begin{itemize}
\itemth{a} $p_{x,y} \in \ZM[q]$ and, if $x \neq y$, then 
$\deg_q p_{x,y} < \ell(y)-\ell(x)$.

\itemth{b} If $1 \le i \le n-1$ is such that 
$s_ix < x < y < s_iy$, then $M_{x,y}^{s_i} \in \ZM$: it is 
equal to $\t_A(q p_{x,y}^*)$ (note also that $qp_{x,y}^* \in \ZM[q^{-1}]$).
\end{itemize}
\end{coro}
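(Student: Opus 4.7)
The plan is to prove (a) and (b) together by induction on $\ell(y)$: within each inductive step, I first establish (a) for the fixed $y$ (using a secondary induction on $\ell(y)-\ell(x)$), and then bootstrap (b) for that same $y$ from (a) (using a downward induction on $\ell(z)$). The key combinatorial input is the Bruhat monotonicity of $\ell_t$: if $x\le y$, then $\ell_t(x)\le\ell_t(y)$, since $\ell_t$ is the $t$-count in any reduced expression and a reduced expression of $x$ can be extracted as a subword of one for $y$ (Lemma \ref{al min} records a strict version when $y=a_l$). Under the hypothesis $\ell_t(x)=\ell_t(y)=l$, any $z$ with $x\le z\le y$ is squeezed to $\ell_t(z)=l$, and $\ph(y)-\ph(x)=(\ell(y)-\ell(x))\,a\in\NM a$; thus as soon as membership $p_{x,y}\in\ZM[q]$ is proved, the degree bound $\deg_q p_{x,y}<\ell(y)-\ell(x)$ follows automatically from $p_{x,y}^*\in A_{<0}$ (see \ref{kl pol}).

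\medskip

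For (a), fix $y$ and assume the statement for all pairs with strictly smaller second index. Pick any left descent $s\in S_n$ of $y$ (if $y=1$ then $x=y$ and there is nothing to do). When $s=s_i\in\{s_1,\dots,s_{n-1}\}$ and $s_ix<x$, I apply the first formula of Theorem \ref{kl plus}(b): the polynomials $p_{x,s_iy}$ and $p_{s_ix,s_iy}$ satisfy the equal-$\ell_t$ hypothesis (since $\ell_t(s_ix)=l=\ell_t(s_iy)$), so by primary induction both lie in $\ZM[q]$; the squeeze forces $\ell_t(z)=l$ for every $z$ in the sum, so $e^{\ph(y)-\ph(z)}=q^{\ell(y)-\ell(z)}$ and $p_{x,z}\in\ZM[q]$; and the $M_{z,s_iy}^{s_i}$ are integers by primary induction applied to (b). Hence $p_{x,y}\in\ZM[q]$.

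\medskip

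When $s=s_i$ but $s_ix>x$, the second formula gives $p_{x,y}=p_{s_ix,y}$, which I treat by the secondary induction on $\ell(y)-\ell(x)$: the base case is $x=y$ with $p_{y,y}=1$, and the step is legitimate since $\ell(s_ix)>\ell(x)$. The delicate case is when $y$ has no left $s_i$-descent, forcing $s=t$. Here $tx>x$ is impossible, as it would give $\ell_t(tx)=l+1>l=\ell_t(y)$, contradicting Bruhat monotonicity applied to $tx\le y$ (which is needed for $p_{x,y}=p_{tx,y}\neq 0$). So $tx<x$, and in the first formula with $s=t$ the term $Q^{2}\,p_{x,ty}$ vanishes (since $\ell_t(x)=l>l-1=\ell_t(ty)$ forces $x\not\le ty$) and the sum is empty (no $z$ with $x\le z<ty$ can satisfy the required $\ell_t$-squeeze). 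The formula collapses to $p_{x,y}=p_{tx,ty}$, and primary induction applies because $\ell_t(tx)=\ell_t(ty)=l-1$ and $\ell(ty)<\ell(y)$.

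\medskip

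For (b), part (a) gives $p_{x,y}^{*}=q^{\ell(x)-\ell(y)}\,p_{x,y}\in q^{-1}\ZM[q^{-1}]$, so $q\,p_{x,y}^{*}\in\ZM[q^{-1}]$. I then prove $M_{z,y}^{s_i}=\t_A(q\,p_{z,y}^{*})\in\ZM$ by downward induction on $\ell(z)$ over all $z$ with $s_iz<z<y<s_iy$ and $\ell_t(z)=l$. In the defining relation \ref{b}, the $\ell_t$-squeeze forces $\ell_t(w)=l$ for every $w$ in the sum $\sum p_{z,w}^{*}\,M_{w,y}^{s_i}$, so by (a) $p_{z,w}^{*}\in q^{-1}\ZM[q^{-1}]$ and by the downward sub-induction $M_{w,y}^{s_i}\in\ZM$; thus the whole sum lies in $A_{<0}$. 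Combined with bar-invariance \ref{a}, $M_{z,y}^{s_i}$ is pinned down as the unique bar-invariant element congruent to $q\,p_{z,y}^{*}$ modulo $A_{<0}$, which is precisely $\t_A(q\,p_{z,y}^{*})$. The main obstacle throughout is bookkeeping: at every recursive step one must verify the $\ell_t$-squeeze so that the nested inductions (primary on $\ell(y)$, secondary on $\ell(y)-\ell(x)$, downward on $\ell(z)$) all stay inside the class $\ell_t(x)=\ell_t(y)$.
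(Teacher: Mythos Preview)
Your argument is correct and follows the same overall scheme as the paper (simultaneous induction feeding the recursion of Theorem~\ref{kl plus}(b) into the defining relation~\ref{b}). There is, however, one genuine difference worth noting. The paper never confronts the case where $t$ is the only left descent of $y$: it invokes Lemma~\ref{al min} to guarantee that some $s_i$ satisfies $s_iy<y$ or $ys_i<y$, and then uses the inversion symmetry $p_{x,y}=p_{x^{-1},y^{-1}}$ from \cite[\S 5.6]{lusztig} to reduce to a left $s_i$-descent. You instead handle the $s=t$ branch directly, observing that Bruhat monotonicity of $\ell_t$ kills $p_{x,ty}$ and empties the sum, so the recursion collapses to $p_{x,y}=p_{tx,ty}$. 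This is a clean self-contained alternative that avoids the external symmetry citation. Your further remark that the degree bound in (a) is automatic from $p_{x,y}^*\in A_{<0}$ once $p_{x,y}\in\ZM[q]$ is known is also a pleasant simplification over the paper, which tracks the degree explicitly through the recursion.
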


\bigskip

\begin{proof}
We shall prove (a) and (b) together by induction on the pair 
$(\ell(y),\ell(y)-\ell(x))$ (with lexicographic order). 
The result is obvious if $\ell(y)=\ell(x)$ or if $\ell(y) \le 1$. 
So assume now that $\ell(y) > 1$, that $\ell(y)-\ell(x) > 0$ and 
that (a) and (b) hold for all pairs $(x',y')$ such that 
$(\ell(y'),\ell(y')-\ell(x')) < (\ell(y),\ell(y)-\ell(x))$. 
First, note that
$$e^{\ph(y)-\ph(x)}=q^{\ell(y)-\ell(x)},$$
because $\ph(y)-\ph(x)=(\ell_t(y)-\ell_t(x)) b + (\ell_s(y)-\ell_s(x)) a
= (\ell_s(y)-\ell_s(x)) a=(\ell(y)-\ell(x)) a$. 

Let us first prove (a). So we have $x < y$ and $\ell_t(x)=\ell_t(y)$. By 
Lemma \ref{al min}, this implies that there exists 
$i \in \{1,2,\dots,n-1\}$ such that $s_i y < y$ or $ys_i < y$. 
In the second case, one can exchange $y$ and $y^{-1}$ 
(and $x$ and $x^{-1}$) by using \cite[\S 5.6]{lusztig}, 
so that we may assume that $s_i y < y$. Then, Theorem \ref{kl plus} (b) 
can be rewritten as follows:
$$p_{x,y}=\begin{cases}
(q^2 p_{x,s_iy} - q^{\ell(y)-\ell(x)} M_{z,s_iy}^{s_i}) +
 p_{s_ix,s_iy} - \DS{\sum_{\substack{x < z < s_iy\\ s_iz < z}}} 
q^{\ell(y)-\ell(z)} p_{x,z} M_{z,s_iy}^{s_i} & \text{if $s_ix < x$,}\\
p_{s_ix,y} & \text{if $s_ix > x$}.
\end{cases}$$
If $s_i x > x$, then the result follows from 
the induction hypothesis. If $s_i x < x$, then 
$$q^2 p_{x,s_iy} - q^{\ell(y)-\ell(x)} M_{x,s_iy}^{s_i} = 
q^{\ell(y)-\ell(x)}(qp_{x,s_iy}^* - M_{x,s_iy}^{s_i})$$
belong to $\ZM[q]$ and 
has degree $< \ell(y)-\ell(x)$ by the induction hypothesis. 
The other terms in the above formula also belong to $\ZM[q]$ 
and also have degree $< \ell(y)-\ell(x)$ by the induction hypothesis. 
So we get (a). 

Let us now prove (b). So we assume that $s_i x < x < y < s_i y$. 
Then, using the induction hypothesis and \ref{kl pol}, 
the condition \ref{b} (b) can be rewritten
$$M_{x,y}^{s_i}- q p_{x,y}^* \in A_{<0}.$$
Now, the result follows easily from (a).
\end{proof}

\bigskip

Now, if $tx < x < y < ty$ 
are such that $\ell_t(x)=\ell_t(y)$, let us define 
an element $\mu_{x,y} \in A$ by induction on $\ell(y)-\ell(x)$ 
by the following formula:
$$\mu_{x,y} = p_{x,y}-\sum_{\substack{x < z < y\\ tz < z}} 
p_{x,z} \mu_{z,y}.$$
It follows easily from Corollary \ref{pxy} (and an induction 
argument on $\ell(y)-\ell(x)$) that
\equat\label{mu q}
\mu_{x,y} \in \ZM[q]\quad\text{and}\quad
\deg_q \mu_{x,y} < \ell(y)-\ell(x).
\endequat
Moreover:

\bigskip

\begin{coro}\label{mt}
Assume that $tx < x < y < ty$ and that $\ell_t(x)=\ell_t(y)$. 
Then:
\begin{itemize}
\itemth{a} If $b > (\ell(y)-\ell(x)) a$, then 
$M_{x,y}^t = Q q^{\ell(x)-\ell(y)} \mu_{x,y} + 
Q^{-1} q^{\ell(y)-\ell(x)} \overline{\mu_{x,y}}$.

\itemth{b} If $b = (\ell(y)-\ell(x)) a$, then 
$M_{x,y}^t = \mu_{x,y} + \overline{\mu_{x,y}} - \t_A(\mu_{x,y})$.
\end{itemize}
\end{coro}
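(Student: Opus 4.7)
The plan is to verify, by induction on $\ell(y)-\ell(x)$, that the expression $N_{x,y}$ on the right-hand side of (a) (respectively (b)) satisfies the two conditions (\ref{a}) and (\ref{b}) characterizing $M_{x,y}^t$: bar-invariance, and the congruence
$$N_{x,y}+\sum_{\substack{x<z<y\\tz<z}}p_{x,z}^*\,N_{z,y}\equiv Q\,p_{x,y}^*\pmod{A_{<0}}.$$
Since those conditions uniquely determine $M_{x,y}^t$, this will force $N_{x,y}=M_{x,y}^t$.

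Bar-invariance is immediate: in case (a) the two summands are interchanged by the bar involution, since $\mu_{x,y}\in\ZM[q]$ by (\ref{mu q}); in case (b), $\mu_{x,y}+\overline{\mu_{x,y}}$ is manifestly bar-invariant and $\t_A(\mu_{x,y})\in\ZM$ is a bar-invariant constant. For the congruence, I first note that by the subword characterization of the Bruhat order, every $z$ with $x<z<y$ satisfies $\ell_t(x)\le\ell_t(z)\le\ell_t(y)$ and hence $\ell_t(z)=\ell_t(x)=\ell_t(y)$; thus Corollary \ref{pxy} applies to $p_{x,z}$, giving $p_{x,z}=q^{\ell(z)-\ell(x)}p_{x,z}^*\in\ZM[q]$ with $\deg_q p_{x,z}<\ell(z)-\ell(x)$, and (\ref{mu q}) yields $\deg_q\mu_{z,y}<\ell(y)-\ell(z)$.

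Multiplying the defining recursion of $\mu_{x,y}$ through by $Qq^{\ell(x)-\ell(y)}$ produces the identity
$$Q\,p_{x,y}^*=Qq^{\ell(x)-\ell(y)}\mu_{x,y}+\sum_{\substack{x<z<y\\tz<z}}p_{x,z}^*\cdot Qq^{\ell(z)-\ell(y)}\mu_{z,y}.$$
In case (a), the induction hypothesis $N_{z,y}=Qq^{\ell(z)-\ell(y)}\mu_{z,y}+Q^{-1}q^{\ell(y)-\ell(z)}\overline{\mu_{z,y}}$ allows me to substitute and obtain
$$N_{x,y}+\sum p_{x,z}^*\,N_{z,y}=Qp_{x,y}^*+R_{x,y},$$
where $R_{x,y}:=Q^{-1}q^{\ell(y)-\ell(x)}\overline{\mu_{x,y}}+\sum p_{x,z}^*\cdot Q^{-1}q^{\ell(y)-\ell(z)}\overline{\mu_{z,y}}$. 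The same identity holds in case (b), because for every $z$ with $x<z<y$ one has $b>a(\ell(y)-\ell(z))$, so the inductive pairs $(z,y)$ still lie in case (a) and the induction hypothesis provides the same formula for $N_{z,y}$.

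The main obstacle is the degree analysis showing that $R_{x,y}$ lies in $A_{<0}$ in case (a), and differs from an element of $A_{<0}$ by exactly $\t_A(\mu_{x,y})$ in case (b). Using the bounds on $p_{x,z}$ and $\mu_{z,y}$ above, one checks that each summand of $R_{x,y}$ has maximal $q$-degree at most $\ell(y)-\ell(x)$ or $\ell(y)-\ell(z)-1$, so after multiplication by $Q^{-1}$ its maximal $\G$-degree is at most $-b+a(\ell(y)-\ell(x))$, a bound attained only by the constant term of the outer summand $Q^{-1}q^{\ell(y)-\ell(x)}\overline{\mu_{x,y}}$. In case (a) this maximal degree is strictly negative by the hypothesis $b>a(\ell(y)-\ell(x))$, so $R_{x,y}\in A_{<0}$. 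In case (b) it is zero, attained precisely by the constant $\t_A(\mu_{x,y})$; subtracting this constant from the case-(a) formula preserves bar-invariance and cancels the excess, yielding the formula of (b). Uniqueness of $M_{x,y}^t$ then finishes the induction.
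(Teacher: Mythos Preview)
Your proof is correct and follows essentially the same approach as the paper. Both arguments hinge on the identity obtained by multiplying the defining recursion for $\mu_{x,y}$ by $Qq^{\ell(x)-\ell(y)}$, and on the observation that each term $p_{x,z}^*\,Q^{-1}q^{\ell(y)-\ell(z)}\overline{\mu_{z,y}}$ lies in $A_{<0}$; the only cosmetic difference is that the paper deduces the shape of $M_{x,y}^t$ from the condition $M_{x,y}^t - Qq^{\ell(x)-\ell(y)}\mu_{x,y}\in A_{<0}$ together with bar-invariance, whereas you verify directly that the proposed formula satisfies the defining conditions and invoke uniqueness.
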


\bigskip

\begin{proof}
Let us assume that $b \ge (\ell(y)-\ell(x)) a$. We shall 
prove the result by induction on $\ell(y)-\ell(x)$. 
By the induction hypothesis, the condition \ref{b} (b) can 
we written
$$M_{x,y}^t - Q q^{\ell(x)-\ell(y)} p_{x,y} + 
\sum_{\substack{x < z < y\\ tz < z}} 
p_{x,z}^* \Bigl(Q q^{\ell(z)-\ell(y)} \mu_{z,y} + 
Q^{-1} q^{\ell(y)-\ell(z)} \overline{\mu_{z,y}}\Bigr) \in A_{<0}.$$
But, if $x < z < y$ and $tz < z$, then 
$$p_{x,z}^* Q^{-1} q^{\ell(y)-\ell(z)} \overline{\mu_{z,y}}
\in A_{<0}$$
because $p_{x,z}^* \in A_{<0}$, $\overline{\mu_{z,y}} \in A_{\leqslant 0}$ 
and $Q^{-1} q^{\ell(y)-\ell(z)}=e^{-b+(\ell(y)-\ell(z))a} \in A_{< 0}$ 
(since $\ell(y)-\ell(z) < \ell(y)-\ell(x)$). 
Therefore, 
$$M_{x,y}^t - Q q^{\ell(x)-\ell(y)} p_{x,y} + 
\sum_{\substack{x < z < y\\ tz < z}} 
Q q^{\ell(x)-\ell(y)} p_{x,z} \mu_{z,y}  \in A_{<0}.$$
In other words, 
$$M_{x,y}^t - Q q^{\ell(x)-\ell(y)} \mu_{x,y} \in A_{< 0}.$$
Let $\mu=Q q^{\ell(x)-\ell(y)} \mu_{x,y}$. 
Two cases may occur:
\begin{itemize}
\item[$\bullet$] If $b > (\ell(y)-\ell(x))a$, then $\mu \in A_{>0}$ 
and so the condition \ref{a} (a) forces 
$M_{x,y}^t=\mu + \overline{\mu}$, as required.

\item[$\bullet$] If $b = (\ell(y)-\ell(x))a$, then 
$\mu=\mu_{x,y} \in A_{\geqslant 0}$ and now the condition \ref{a} (a) forces 
$M_{x,y}^t=\mu + \overline{\mu}-\t_A(\mu)$, as required.
\end{itemize}
The proof of the Corollary is complete. 
\end{proof}

\bigskip

We conclude this subsection with two results involving 
the decomposition of Lemma \ref{decomposition}. 

\bigskip

\begin{lem}\label{remove b}
Let $x$ and $y$ be two elements of $W_n$ and let $s \in S_n$ 
be such that $sx<x<y<sy$, $\ell_t(x)=\ell_t(y)$ and 
$\b_x=\b_y=\b$. Then $M_{x,y}^s=M_{x\b,y\b}^s$ (note that 
$\b_{x\b}=\b_{y\b}=1$).
\end{lem}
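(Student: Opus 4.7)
The plan is to proceed by induction on $\ell(\beta)$. The base case $\ell(\beta) = 0$ is vacuous since then $x\beta = x$ and $y\beta = y$. For the inductive step, since $\beta \in Y_{l,n-l} \subseteq \SG_n$ (with $l = \ell_t(x) = \ell_t(y)$) is nontrivial, one may choose a simple reflection $u \in \{s_1,\dots,s_{n-1}\}$ which is a right descent of $\beta^{-1}$. From the length-additive decomposition $x = \alpha_x a_l \sigma_x \beta^{-1}$ (and similarly for $y$), it follows that $u$ is a common right descent of both $x$ and $y$, so $xu<x$ and $yu<y$. Iterating this procedure (at each step removing a right descent of the remaining right tail), after $\ell(\beta)$ steps we arrive precisely at the pair $(x\beta, y\beta)$. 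It therefore suffices to establish the single-step identity: if $sx<x<y<sy$, $\ell_t(x)=\ell_t(y)$, and $u$ is a common right descent of $x$ and $y$, then $M_{x,y}^s = M_{xu,yu}^s$.

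For the single-step identity, the plan is to compare the two sides of the defining congruence \ref{b} for each of $M_{x,y}^s$ and $M_{xu,yu}^s$, using bar-invariance \ref{a} and uniqueness of the solution. The key input is the classical identity
$$p_{a,b}^* = p_{au, bu}^* \qquad \text{whenever } au<a \text{ and } bu<b,$$
a consequence of the right-multiplication analog of Theorem \ref{kl plus}(b); this handles the principal term $e^{\ph(s)} p_{x,y}^*$. For the sum $\sum_{x<z<y,\, sz<z} p_{x,z}^* M_{z,y}^s$, one proceeds by a nested induction on $\ell(y)-\ell(x)$: the induction hypothesis gives $M_{z,y}^s = M_{zu, yu}^s$ for intermediate $z$ with $zu<z$, while the same KL identity supplies $p_{x,z}^* = p_{xu, zu}^*$. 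Because the left action of $s$ and the right action of $u$ commute, the hypotheses $s(zu)<zu$ transfer correctly, and the map $z \mapsto zu$ gives a bijection between the relevant index sets. Combining everything, the defining congruences for $M_{x,y}^s$ and $M_{xu,yu}^s$ coincide term by term modulo $A_{<0}$, and bar-invariance forces equality.

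The main obstacle is the careful bookkeeping for the bijection between the sets $\{z : x<z<y,\, sz<z\}$ and $\{z' : xu<z'<yu,\, sz'<z'\}$: one must account not only for $z$ with $zu<z$ (which match directly under $z \mapsto zu$), but also argue that elements with $zu>z$ either do not contribute or contribute in pairs $(z, zu)$ that cancel or absorb into the bar-invariant part. This requires a delicate use of the commutation of left and right simple reflections, together with the observation that the interplay of the hypotheses $sx<x$, $sy>y$ with the right descent $u$ constrains the Bruhat interval $[x,y]$ in a controlled way. Once the bijection is established, the remainder of the argument is routine manipulation of the recursive KL formulas.
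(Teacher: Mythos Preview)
Your plan has a genuine gap: the ``classical identity'' $p_{a,b}^* = p_{au,bu}^*$ whenever $au<a$ and $bu<b$ is \emph{false} in general, and it is not what the right-hand analogue of Theorem~\ref{kl plus}(b) says (that result gives $p^*_{x,y}=e^{-\varphi(u)}p^*_{xu,y}$ when $xu>x$ and $yu<y$, a different statement). A counterexample already lives in $\SG_4$ (hence in $W_n$ for $n\ge 4$, with $\ell_t=0$ throughout): take $x=s_2$, $y=s_2s_1s_3s_2$, $u=s_2$; then $xu<x$, $yu<y$, and $p_{x,y}=1+q^2$ while $p_{xu,yu}=1$. So your ``single-step identity'' for an \emph{arbitrary} common right descent $u$ cannot hold, and in particular the principal term $e^{\varphi(s)}p^*_{x,y}$ does not automatically match $e^{\varphi(s)}p^*_{xu,yu}$. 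The same counterexample shows that the index sets $\{z:x<z<y,\ sz<z\}$ and $\{z':xu<z'<yu,\ sz'<z'\}$ need not correspond term-by-term under $z\mapsto zu$; you flag this as the ``main obstacle'' but do not resolve it, and it is not resolvable at the level of generality you work in.

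The paper itself does not give an independent argument; it cites \cite[Proposition~7.2]{lacriced}. The point is that the $u$ one strips off is not an arbitrary common right descent but one forced by the common $\beta$-component, and the hypotheses $\ell_t(x)=\ell_t(y)$ together with $\beta_x=\beta_y$ constrain the elements $z$ in the recursion enough to make the comparison go through. Establishing this control is essentially the entire content of the proof, not routine bookkeeping after the fact; your outline stops exactly where the real work begins.
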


\bigskip

\begin{proof}
See \cite[Proposition 7.2]{lacriced}. Strictly speaking, 
in \cite{lacriced}, the authors are generally working 
with a special choice of a function $\ph$ (``asymptotic case''): 
however, the 
reader can check that the proof of this particular result, namely 
\cite[Proposition 7.2]{lacriced}, remains valid for all 
choices of parameters. 
\end{proof}

\bigskip

\begin{prop}\label{aalsb}
Let $l \in [0,n]$, let $\s$ and $\s' \in \SG_{l,n-l}$ be such 
that $\s \sim_L \s'$ and let $\b \in Y_{l,n-l}$. Then
$$a_l \s \b^{-1} \sim_L a_l \s' \b^{-1}.$$
\end{prop}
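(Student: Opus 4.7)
The strategy is to reduce the hypothesis $\s\sim_L\s'$ to a chain of single $\gauche$-arrows with endpoints in $\SG_{l,n-l}$, and to lift each individual arrow, via an explicit left multiplication in $\HC_n$, to the relation $a_l\s\b^{-1}\gauche a_l\s'\b^{-1}$.

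By transitivity of $\sim_L$ it suffices to treat a single step $\s\gauche\s'$ with $\s,\s'\in\SG_{l,n-l}$, meaning that $C_\s$ occurs with nonzero coefficient in $C_s C_{\s'}$ for some simple reflection $s\in S_n$. A direct inspection via Theorem~\ref{kl plus}(a) then rules out $s=t$ and $s=s_l$: in both cases one has $s\s'>\s'$ (for $s=t$ since $t\notin D_L(\s')$ for $\s'\in\SG_n$; for $s=s_l$ since $\s'^{-1}(l)<\s'^{-1}(l+1)$ together with Lemma~\ref{longueur}), so
$$C_s C_{\s'}\;=\;C_{s\s'}\;+\;\sum_{\substack{sx<x\\ x<\s'}}M^s_{x,\s'}\,C_x;$$
but $s\s'\notin\SG_{l,n-l}$ in either case, and no $x=\s$ can appear in the sum since $s\s>\s$ for every $\s\in\SG_{l,n-l}$ when $s\in\{t,s_l\}$. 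Hence $s=s_i$ for some $i\in\{1,\dots,n-1\}\setminus\{l\}$.

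Set $s':=s_i$ if $i>l$ and $s':=s_{l-i}$ if $i<l$; by~(\ref{al s}) we have the key commutation $s'a_l=a_l s_i$. Lemma~\ref{decomposition} provides the length-additive decomposition $a_l\s'\b^{-1}=1\cdot a_l\cdot\s'\cdot\b^{-1}$, so the inequality $s_i\s'>\s'$ lifts to $s'(a_l\s'\b^{-1})=a_l s_i\s'\b^{-1}>a_l\s'\b^{-1}$, and Theorem~\ref{kl plus}(a) yields
$$C_{s'}\,C_{a_l\s'\b^{-1}}\;=\;C_{a_l s_i\s'\b^{-1}}\;+\;\sum_{\substack{s'x<x\\ x<a_l\s'\b^{-1}}}M^{s'}_{x,\,a_l\s'\b^{-1}}\,C_x.$$
If $\s=s_i\s'$, the leading term equals $C_{a_l\s\b^{-1}}$ and we obtain $a_l\s\b^{-1}\gauche a_l\s'\b^{-1}$ at once. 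Otherwise $\s<\s'$, $s_i\s<\s$, and $M^{s_i}_{\s,\s'}\neq 0$; length-additivity then gives both $a_l\s\b^{-1}<a_l\s'\b^{-1}$ in Bruhat order and $s'(a_l\s\b^{-1})<a_l\s\b^{-1}$, so the coefficient of $C_{a_l\s\b^{-1}}$ above is $M^{s'}_{a_l\s\b^{-1},\,a_l\s'\b^{-1}}$.

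The proof concludes by showing this $M$-polynomial is nonzero. By the uniqueness in Lemma~\ref{decomposition}, both $a_l\s\b^{-1}$ and $a_l\s'\b^{-1}$ have $\a_\cdot=1$, $\b_\cdot=\b$ and $\ell_t(\cdot)=l$, so Lemma~\ref{remove b} reduces the task to showing $M^{s'}_{a_l\s,a_l\s'}\neq 0$. This is the main technical obstacle. The plan is to establish the stronger identity $p_{a_l\s,a_l\s'}=p_{\s,\s'}$ by a parallel induction on $\ell(\s')-\ell(\s)$, using the recursive formulas of Theorem~\ref{kl plus}: the bijection $\tau\mapsto a_l\tau$ from $\SG_{l,n-l}$ onto $a_l\SG_{l,n-l}$ preserves length differences and, via the commutation $(a_l s'' a_l)\cdot a_l=a_l\cdot s''$ coming from~(\ref{al s}), identifies left descents of $\tau\in\SG_{l,n-l}$ with left descents of $a_l\tau\in a_l\SG_{l,n-l}$; thus the recursion for $p_{a_l\s,a_l\s'}$ runs in lockstep with the parabolic recursion for $p_{\s,\s'}$ inside $\SG_{l,n-l}$. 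Once this identity is in hand, Corollary~\ref{pxy}(b) (applicable since $\ell_t$ is constant on each pair) gives $M^{s'}_{a_l\s,a_l\s'}=\t_A(q\,p^*_{a_l\s,a_l\s'})=\t_A(q\,p^*_{\s,\s'})=M^{s_i}_{\s,\s'}\neq 0$, completing the proof.
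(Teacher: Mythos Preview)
Your route is different from the paper's and considerably more laborious; the decisive step---the ``parallel induction'' establishing $p_{a_l\s,a_l\s'}=p_{\s,\s'}$---is only sketched and has a genuine gap. For the recursion of Theorem~\ref{kl plus}(b) to run in lockstep on both sides, every intermediate $z$ appearing in the sum (with $a_l\s\le z<a_l s_i\s'$ and $s'z<z$) must lie in $a_l\SG_{l,n-l}$; otherwise the induction hypothesis says nothing about $p_{a_l\s,z}$ or $M^{s'}_{z,\,\cdot}$, and the two recursions do not match. This containment is in fact true---$a_l\s'$ lies in the parabolic $W_l\times\SG_{[l+1,n]}$ generated by $S_n\setminus\{s_l\}$, and inside that direct product the constraint $\ell_t=l$ (forced by $a_l\s\le z\le a_l\s'$) pins $z$ down to $a_l\SG_l\times\SG_{[l+1,n]}=a_l\SG_{l,n-l}$---but you do not address it, and it is the heart of why the induction closes. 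Your observation about length differences and left descents under $\tau\mapsto a_l\tau$ is not enough by itself.

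The paper's argument is a few lines and sidesteps all of this. The key ingredient you are missing is the classical Kazhdan--Lusztig description of left cells in the symmetric group \cite[Theorem~1.4 and \S4]{KL}: the relation $\s\sim_L\s'$ within $\SG_{l,n-l}$ is generated by single \emph{Knuth moves}, so one may reduce at once to $\s'=s'\s$ for simple reflections $s,s'\in\{s_1,\dots,s_{l-1},s_{l+1},\dots,s_{n-1}\}$ satisfying the chain $s\s<\s<\s'<s\s'$. Setting $u=a_l s a_l$ and $u'=a_l s' a_l$ (again simple reflections, by~(\ref{al s})) and appending $\b^{-1}$ on the right preserves this chain of inequalities verbatim, so Corollary~\ref{kl} applies directly to $a_l\s\b^{-1}$ and $a_l\s'\b^{-1}$. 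There is no need to compare $M$-polynomials or Kazhdan--Lusztig polynomials at all; your second case (the one requiring the parallel induction) simply never arises once the reduction is made this way.
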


\bigskip

\begin{proof}
By the description of Kazhdan-Lusztig left cells in the symmetric 
group \cite[Theorem 1.4 and \S 4]{KL}, we may assume that there exist 
two elements 
$s$ and $s'$ in $\{s_1,\dots,s_{l-1},s_{l+1},\dots,s_{n-1}\}$ such that 
$\s'=s'\s$ and $s\s < \s < \s' < s\s'$. Let $u=a_lsa_l$ and 
$u'=a_ls'a_l$. Then $u$ and $u'$ belong to 
$\{s_1,\dots,s_{l-1},s_{l+1},\dots,s_{n-1}\}$ by \ref{al s}, and
$$u a_l \s \b^{-1} < a_l \s \b^{-1} < u'a_l \s \b^{-1}=a_l \s' \b^{-1}
< u a_l \s' \b^{-1}.$$
So $(*)$ follows from Corollary \ref{kl}.
\end{proof}

\bigskip

\soussection{${\boldsymbol{*}}$-operation\label{star sec}} 
We shall recall the definition of the $*$-operation (see \cite[\S 4]{KL}) 
and prove some properties which are particular to the type $B$. 
Let us introduce 
some notation. If $1 \le i \le n-2$ and $x \in W_n$, we set 
$$\RC_i(x)=\{s \in \{s_i,s_{i+1}\}~|~\ell(xs) < \ell(x)\}.$$
We denote by $\DC_i(W_n)$ the set of $x \in W_n$ such that 
$|\RC_i(x)|=1$. If $x \in \DC_i(W_n)$, then it is readily 
seen that the set $\{xs_i,xs_{i+1}\} \cap \DC_i(W_n)$ is 
a singleton. We shall denote by $\g_i(x)$ the unique element 
of this set (it is denoted by $x^*$ in \cite[\S 4]{KL}, but 
we want to emphasize that it depends on $i$). Note that 
$$\g_i \circ \g_i = \Id_{\DC_i(W_n)}.$$
We recall Kazhdan-Lusztig result \cite[Corollary 4.3]{KL}: if 
$x$ and $y \in \DC_i(W_n)$, then
\equat\label{star}
x \sim_L y \Longleftrightarrow \g_i(x) \sim_L \g_i(y).
\endequat
The fact that $t$ is not conjugate to any of the $s_k$'s implies 
the following easy fact:

\bigskip

\begin{prop}\label{descente t}
Let $x \in W_n$ and let $1 \le k \le n-1$. Then 
$xs_k > x$ if and only if $txs_k > tx$.
\end{prop}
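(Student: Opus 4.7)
The plan is to reduce the statement directly to the combinatorial description of descents given by Lemma \ref{longueur}(a), and then carry out a small case analysis based on how $t$ acts as a permutation of $I_n$.

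First, by Lemma \ref{longueur}(a), the condition $xs_k>x$ is equivalent to $x(k)<x(k+1)$, and similarly $txs_k>tx$ is equivalent to $(tx)(k)<(tx)(k+1)$. Since multiplication in $W_n$ corresponds to composition of permutations of $I_n$ with $(tx)(i)=t(x(i))$, and since $t=(1,-1)$ by \ref{ti}, it suffices to show that for any $\alpha,\beta\in I_n$ with $|\alpha|\neq|\beta|$ one has $\alpha<\beta$ if and only if $t(\alpha)<t(\beta)$, applied to $\alpha=x(k)$ and $\beta=x(k+1)$. The hypothesis $|\alpha|\neq|\beta|$ is automatic here: since $x$ is a signed permutation, $|x(k)|\neq|x(k+1)|$, so in particular $\{x(k),x(k+1)\}\neq\{1,-1\}$.

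With this reduction in hand, the case analysis is immediate. If $\{\alpha,\beta\}\cap\{1,-1\}=\varnothing$, then $t$ fixes both $\alpha$ and $\beta$, and there is nothing to do. Otherwise, by the observation above, exactly one of $\alpha,\beta$ lies in $\{1,-1\}$; say $\alpha\in\{1,-1\}$ (the other case is symmetric). Then $|\beta|\geqslant 2$, so either $\beta\geqslant 2$ (in which case both $\alpha<\beta$ and $t(\alpha)<\beta$ hold, since $t(\alpha)\in\{1,-1\}$) or $\beta\leqslant -2$ (in which case both inequalities fail). In every case $\alpha<\beta\Longleftrightarrow t(\alpha)<t(\beta)$, which gives the result.

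There is essentially no obstacle here: the proposition is a purely combinatorial remark about how the reflection $t$, which swaps only $1$ and $-1$, interacts with the linear order on $I_n$. The only subtlety worth noting explicitly is the impossibility of $\{x(k),x(k+1)\}=\{1,-1\}$, which rules out the one configuration in which swapping $1\leftrightarrow -1$ could reverse a comparison.
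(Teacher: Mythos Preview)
Your proof is correct and follows the same approach as the paper: both reduce to Lemma~\ref{longueur}(a) and then check that applying $t$ preserves the comparison between $x(k)$ and $x(k+1)$. If anything, your version is more careful than the paper's, since you make explicit the one point that actually needs it --- namely that $\{x(k),x(k+1)\}\neq\{1,-1\}$ because $x$ is a signed permutation --- whereas the paper's terse phrasing (``there is no $j'\in I_n$ with $t(j)<j'<j$'') leaves this case implicit.
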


\bigskip

\begin{proof}
Indeed, by Lemma \ref{longueur}, we have $xs_k > x$ if and only if 
$x(k) < x(k+1)$. But, for any $j \in I_n^+$, there is no element 
$j' \in I_n$ such that $t(j) < j' < j$. So $x(k) < x(k+1)$ 
if and only if $tx(k) < tx(k+1)$ that is, if and only if 
$txs_k > tx$ (again by Lemma \ref{longueur}).
\end{proof}

\bigskip

The proposition \ref{descente t} implies immediately the 
following result:

\bigskip

\begin{coro}\label{star t}
Let $x \in W_n$ and let $1 \le i \le n-2$. 
Then $x \in \DC_i(W_n)$ if and only if $tx \in \DC_i(W_n)$. 
If this is the case, then $\g_i(tx)=t\g_i(x)$.
\end{coro}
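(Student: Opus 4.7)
The plan is to deduce everything directly from Proposition \ref{descente t}. The key observation is that Proposition \ref{descente t} says left multiplication by $t$ preserves the right descent status with respect to each $s_k$ (for $1 \le k \le n-1$), and hence preserves the set $\RC_i(x)$ for every $i \in \{1,\dots,n-2\}$. Explicitly, applying the proposition with $k=i$ and $k=i+1$ to an arbitrary $x \in W_n$ gives
\[
\RC_i(tx) = \RC_i(x),
\]
so in particular $|\RC_i(tx)| = |\RC_i(x)|$, which is exactly the statement $x \in \DC_i(W_n) \Longleftrightarrow tx \in \DC_i(W_n)$.

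For the second assertion, fix $x \in \DC_i(W_n)$ and recall that $\g_i(x)$ is defined as the unique element of $\{xs_i,xs_{i+1}\} \cap \DC_i(W_n)$. I would argue as follows: the element $t\g_i(x)$ lies in $\{t(xs_i),t(xs_{i+1})\} = \{(tx)s_i,(tx)s_{i+1}\}$, and by the first part of the corollary applied to the element $\g_i(x) \in \DC_i(W_n)$, we have $t\g_i(x) \in \DC_i(W_n)$. So $t\g_i(x)$ is an element of $\{(tx)s_i,(tx)s_{i+1}\} \cap \DC_i(W_n)$. Since $tx \in \DC_i(W_n)$, this intersection is a singleton (namely $\{\g_i(tx)\}$), so $\g_i(tx) = t\g_i(x)$.

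There is essentially no obstacle here: once Proposition \ref{descente t} is granted, both parts of Corollary \ref{star t} are immediate bookkeeping about the definition of $\DC_i(W_n)$ and $\g_i$. The only point to be slightly careful about is to apply the first part twice (once to $x$ itself, once to $\g_i(x)$) in order to conclude the identification $\g_i(tx)=t\g_i(x)$.
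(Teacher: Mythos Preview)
Your argument is correct and matches the paper's approach exactly: the paper simply states that Proposition~\ref{descente t} ``implies immediately'' Corollary~\ref{star t} without spelling out details, and what you have written is precisely the natural unpacking of that immediacy via $\RC_i(tx)=\RC_i(x)$ and the uniqueness in the definition of $\g_i$.
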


\bigskip

\soussection{Two relations ${\boldsymbol{\gauche}}$} 
The crucial steps towards the proof of Theorem \ref{main} are 
the following two propositions, whose proofs will be given in sections 
\ref{M1} and \ref{M2} respectively.

\bigskip

\begin{prop}\label{exemples m1}
Let $l \in \{1,\dots,n-1\}$ and assume that $b \ge (n-1) a$. 
Then $M_{r_1\cdots r_l \s_{[l+1,n]},r_2 \dots r_l r_n \s_{[l+1,n]}}^t \neq 0$. 
\end{prop}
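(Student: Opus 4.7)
The plan is to reduce the claim $M_{x,y}^t \neq 0$ to $\mu_{x,y}\neq 0$ via Corollary~\ref{mt}, and then to compute $\mu_{x,y}$ by a careful induction on $\ell(y)-\ell(x)$.

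\emph{Step 1 (Setup and reduction).} Applying Lemma~\ref{decomposition} to $y$ with the increasing sequence $(i_1,\ldots,i_l)=(2,3,\ldots,l,n)$, one reads off $\ell(\alpha_y)=\sum_{k=1}^l(i_k-k)=(l-1)+(n-l)=n-1$, together with $\sigma_y=\sigma_{[l+1,n]}=\sigma_x$ and $\beta_y=1=\beta_x$. In particular $\ell(y)-\ell(x)=n-1$, so the hypothesis $b\ge(n-1)a$ matches precisely the assumptions of Corollary~\ref{mt}. Bar-invariance of $M_{x,y}^t$ together with the explicit $Q^{\pm 1}$-structure in the right-hand side of that corollary forces $M_{x,y}^t=0$ if and only if $\mu_{x,y}=0$.

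\emph{Step 2 (Enumeration of intermediates).} Any $z$ with $x<z<y$ satisfies $\ell_t(z)=l$ because the subword characterization of Bruhat order preserves $\ell_t$, so $z$ admits a decomposition $z=\alpha_z a_l\sigma_z\beta_z^{-1}$ via Lemma~\ref{decomposition}. The condition $tz<z$ translates to $z^{-1}(1)<0$. I would first verify the explicit factorization $\alpha_y=(s_1 s_2\cdots s_{l-1})(s_{n-1}s_{n-2}\cdots s_l)$ by comparing both sides as signed permutations of $I_n$. Using this, I expect the intermediates contributing to $\mu_{x,y}$ to come in two families: those where $\alpha_z$ runs over prefixes of this reduced expression for $\alpha_y$ (with $\sigma_z=\sigma_{[l+1,n]}$ and $\beta_z=1$), and those where $\alpha_z=1$ but $\sigma_z$ is modified by right-multiplication inside $\mathfrak{S}_{[1,l]}\subset\mathfrak{S}_{l,n-l}$.

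\emph{Step 3 (Inductive computation).} Given the intermediates identified in Step~2, I would compute $p_{x,z}$ and $\mu_{z,y}$ by induction on $\ell(y)-\ell(x)$, using Theorem~\ref{kl plus}(b) for the Kazhdan--Lusztig polynomial recursion, Corollary~\ref{pxy} for degree control, and Proposition~\ref{aalsb} to relate elements sharing the same $a_l$-structure. Small cases provide a sanity check and suggest that $\mu_{x,y}=\pm 1$ throughout: $n=2$, $l=1$ gives $\mu_{x,y}=1$; $n=3$, $l=1$ also gives $\mu_{x,y}=1$; and $n=3$, $l=2$ gives $\mu_{x,y}=-1$ once \emph{both} intermediates $s_2 a_2$ and $a_2 s_1=w_0(W_2)$ are correctly counted.

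\emph{Main obstacle.} The crux is the exhaustive enumeration of intermediates in Step~2. The $n=3$, $l=2$ case already illustrates the subtlety: the intermediate $a_2 s_1$ is a Bruhat subword of $y=s_1 s_2 t s_1 t$ only after the braid relation $(s_1 t)^4=1$ is used to rewrite $a_2 s_1 = ts_1 ts_1$ as $s_1 t s_1 t$, which is then visible as a subword by deleting the $s_2$. Missing even one such intermediate spuriously yields $\mu_{x,y}=0$ and invalidates the proposition. A provably complete enumeration---ideally phrased directly in terms of the Lemma~\ref{decomposition} parameters $(\alpha_z,\sigma_z,\beta_z)$ together with a combinatorial Bruhat-comparison criterion on them---is what the ``brute force'' computation announced in the introduction must supply, and it is the main computational and conceptual hurdle of the argument.
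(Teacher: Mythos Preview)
Your Step~1 reduction via Corollary~\ref{mt} is exactly the paper's opening move, and your claim that every intermediate $z$ has $\ell_t(z)=l$ is correct. But the enumeration you flag as the main obstacle is the entire content of the proof, and your ``two families'' guess does not reflect the actual structure. The paper shows that $[x;y]$ is isomorphic as a poset to the Boolean lattice $\mathcal{P}([1,n-1])$: writing $x=\sigma_{[l+1,n]}a_l$ and $y=c_{[1,l-1]}\sigma_{[l,n]}a_l$, the map $z\mapsto za_l^{-1}$ carries $[x;y]$ to a Bruhat interval inside $\mathfrak{S}_n$ (since $a_l$ has minimal length in $\mathfrak{S}_n a_l$), and that interval then factors as $[1;c_{[1,l-1]}]\times[\sigma_{[l+1,n]};\sigma_{[l,n]}]$ along disjoint commuting supports. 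The result is a single product parametrization by subsets $I\subseteq[1,n-1]$, not two separate families; your troublesome intermediate $a_2 s_1$ at $(n,l)=(3,2)$ is simply the image of $I=\{2\}$ under this bijection, with no braid-relation rewriting needed.

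Two further ideas, absent from your plan, are what make the induction close. First, the paper works only with the constant term $\tau_A(\mu_{x,y})$: since $\tau_A(p_{x,z})=1$ whenever $x\le z$ (see~(\ref{kl pol})), the defining recursion for $\mu$ collapses at the level of constant terms to $\tilde\mu_I = 1 - \sum_{J\supsetneq I,\,t\alpha(J)<\alpha(J)} \tilde\mu_J$, a purely combinatorial identity in which no Kazhdan--Lusztig polynomials remain. Second, the descent condition $t\alpha(I)>\alpha(I)$ is shown (Lemma~\ref{alpha I}) to hold precisely when $[1,l-1]\subsetneq I$. With these two ingredients a signed inclusion--exclusion over subsets gives $\tilde\mu_\emptyset=(-1)^{l-1}$, confirming your small-case answers. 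By contrast, your proposed route through Theorem~\ref{kl plus}(b) and Corollary~\ref{pxy} would require tracking the full polynomials $p_{x,z}$, and Proposition~\ref{aalsb} concerns left-cell equivalences rather than $M$- or $\mu$-values, so it does not help here.
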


\bigskip

\begin{prop}\label{exemples m2}
Let $l \in \{1,\dots,n-1\}$ and assume that $(n-2) a < b \le (n-1) a$. 
Then $a_{l-1}\s_{[l,n]} \gauche a_l \s_{[l,n]}$.
\end{prop}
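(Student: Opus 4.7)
The plan is to realise the desired left preorder relation directly, by computing a specific product in the Kazhdan--Lusztig basis. Set $x:=a_{l-1}\s_{[l,n]}$ and $y:=a_l\s_{[l,n]}$. Using the explicit formulas \ref{al} and \ref{ri}, one first describes both elements as signed permutations; Lemma \ref{longueur} then yields that $y$ has left descents exactly $\{t,s_{l+1},\ldots,s_{n-1}\}$ while $x$ has left descents $\{t,s_l,s_{l+1},\ldots,s_{n-1}\}$. In particular $s_l$ is a left ascent of $y$ and a left descent of $x$ (for $l\ge 2$; the case $l=1$ being analogous with $s_1$ in place of $s_l$), which singles out $h:=C_{s_l}$ as the natural multiplier.

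From the factorisation $y=a_{l-1}\cdot r_l\cdot\s_{[l,n]}$ given by Lemma \ref{decomposition} (so that $\ell(y)=\ell(a_{l-1})+\ell(r_l)+\ell(\s_{[l,n]})$), one deduces $x<y$ in the Bruhat order with $\ell(y)-\ell(x)=l$, simply by deleting the middle factor $r_l$ from a reduced word for $y$. A short check on the signed permutations also shows $s_ly\neq x$. Applying Theorem \ref{kl plus}~(a) to the product $C_{s_l}C_y$ then reduces the proposition $x\gauche y$ to the single non-vanishing statement
\[
M^{s_l}_{x,y}\neq 0.
\]

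The main obstacle is computing $M^{s_l}_{x,y}$. Since $\ell_t(x)=l-1$ differs from $\ell_t(y)=l$, the convenient formula of Corollary \ref{pxy}~(b) is not available, and one has to unravel the defining inductive conditions \ref{a}--\ref{b} by hand: compute $p^*_{x,y}$, and the intermediate polynomials $p^*_{x,z}$ for $x<z<y$ with $s_lz<z$, using Theorem \ref{kl plus}~(b), substitute into the recursion for $M^{s_l}_{x,y}$, and check which monomials survive modulo $A_{<0}$. The parameter range $(n-2)a<b\le(n-1)a$ should enter precisely here: it controls which monomials in $e^{\ph(s_l)}p^*_{x,y}=qp^*_{x,y}$ land in $A_{\ge 0}$ and therefore, by the bar-symmetric completion forced by \ref{a}, contribute a non-zero term to $M^{s_l}_{x,y}$ which cannot be cancelled by the sum over intermediate $z$. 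If this direct computation proves too intricate, a fall-back is to combine Proposition \ref{aalsb} with the $*$-operation (Corollary \ref{star t} and \ref{star}) to reduce the assertion to a low-rank base case that can be resolved by an explicit calculation in a small Hecke algebra.
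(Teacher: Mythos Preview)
Your setup is correct up to the point where you identify the left descents of $x$ and $y$ and single out $s_l$, but the crucial claim $M^{s_l}_{x,y}\neq 0$ is false in general, so the proposed approach collapses. Take $l=1$ and any $n\ge 3$: then $x=\s_{[1,n]}=\s_n$ and $y=t\s_n$, so $\ell(y)-\ell(x)=1$ and there are no intermediate $z$ in the recursion \ref{b}. Since $tx=y>x$, Theorem \ref{kl plus}(b) gives $p^*_{x,y}=Q^{-1}$, whence $e^{\ph(s_1)}p^*_{x,y}=qQ^{-1}=e^{a-b}$. In the prescribed range one has $b>(n-2)a\ge a$, so $qQ^{-1}\in A_{<0}$ and the bar-symmetric condition \ref{a} forces $M^{s_1}_{x,y}=0$. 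In fact for $l=1$, $n\ge 3$ the only $s\in S_n$ with $sx<x<y<sy$ is $s_1$, so no single generator $C_s$ can witness $x\gauche y$. The fall-back you mention does not help either: Proposition \ref{aalsb} and the $*$-operation preserve $\ell_t$, whereas here $\ell_t(x)=l-1\neq l=\ell_t(y)$.

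The paper circumvents this by multiplying $C_y$ not by a single $C_{s_l}$ but by an element $C_{n-1}$ built inductively from $C_{s_l},C_{s_{l+1}},\dots,C_{s_{n-1}}$ via $C_{l-1}=1$, $C_l=C_{s_l}$, $C_{j+1}=C_{s_{j+1}}C_j-C_{j-1}$. The heart of the argument is an explicit computation of $C_{n-1}C_{a_l\s_{[l,n]}}$ modulo $\HC_n[l-2]+\HC_n^{<0}$, using the closed formula $C_{a_l}=\prod_{i=1}^l(T_{t_i}+Q^{-1})\,T_{\s_l}^{-1}$ (valid here because $a_l\in W_{n-1}$ and $b>(n-2)a$). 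Expanding and tracking degrees, the only term that survives carries the coefficient $Q^{-1}(q-q^{-1})^{l-1}(q+q^{-1})^{n-l}$, whose top monomial $Q^{-1}q^{n-1}=e^{-b+(n-1)a}$ lies in $A_{\ge 0}$ exactly when $b\le(n-1)a$; this is what produces the nonzero coefficient of $C_{a_{l-1}\s_{[l,n]}}$. The point is that one needs all $n-l$ factors $C_{s_l},\dots,C_{s_{n-1}}$ to accumulate the power $q^{n-l}$ (and the expansion of $T_{\s_l}^{-1}$ contributes the remaining $q^{l-1}$), matching the parameter window; a single $C_{s_l}$ gives only $q$, which is far too small once $n\ge 3$.
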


\bigskip

\section{Proof of Proposition \ref{exemples m1}\label{M1}}

\medskip

\begin{quotation}
\noindent{\bf Notation.} 
{\it If $u$, $v \in W_n$ are such that $u \le v$, we denote by 
$[u;v]$ the Bruhat interval between $u$ and $v$. In this section,
and only in this section, we assume that $l \ge 1$ and $b \ge (n-1)a$ 
and we set $x=r_1\cdots r_l \s_{[l+1,n]}$ 
and $y=r_2 \dots r_l r_n \s_{[l+1,n]}$.}
\end{quotation}

\bigskip

\soussection{Easy reduction} 
Note that  
$$tx < x < y < ty,$$
so it makes sense to compute $M_{x,y}^t$. Moreover, 
$\ell(y)-\ell(x) = n-1$ so, by Corollary \ref{mt}, 
we only need to prove that $\m_{x,y} \neq 0$ 
(even if $b=(n-1)a$). For this, we only need to 
show that
$$\t_A(\mu_{x,y}) \neq 0.\leqno{(?)}$$

\bigskip

\soussection{The Bruhat interval ${\boldsymbol{[x;y]}}$} 
First, note that 
$$x=a_l\s_{[l+1,n]}=\s_{[l+1,n]}a_l$$
$$y=s_1\cdots s_{l-1} s_{n-1}\cdots s_l x= 
s_1\cdots s_{l-1} s_{n-1}\cdots s_l \s_{[l+1,n]}a_l = 
c_{[1,l-1]} \s_{[l,n]} a_l.\leqno{\text{and}}$$
Since $a_l$ has minimal length in $\SG_n a_l$, the map
$$\fonctio{[\s_{[l+1,n]};c_{[1,l-1]} \s_{[l,n]}]}{[x;y]}{z}{za_l}$$
is an increasing bijection \cite[Lemma 9.10 (f)]{lusztig}. 
Since the support of $c_{[1,l-1]}$ is disjoint from 
the support of $\s_{[l,n]}$, the map
$$\fonctio{[1;c_{[1,l]}] 
\times [\s_{[l+1,n]};\s_{[l,n]}]}{[\s_{[l+1,n]};c_{[1,l-1]}
\s_{[l,n]}]}{(z,z')}{zz'}$$
is an increasing bijection (for the product order). Now, 
$\s_{[l,n]}$ is the longest element of $\SG_{[l,n]}$ and 
$\s_{[l+1,n]}\s_{[l,n]}=c_{[l,n-1]}$. Therefore, the map 
$$\fonctio{[1;c_{[l,n-1]}]}{[\s_{[l+1,n]};\s_{[l,n]}]}{z}{z\s_{[l,n]}}$$
is a decreasing bijection. So, if we denote by $\PC(E)$ 
the set of subsets of a set $E$, then the maps
$$\fonctio{\PC([1,l-1])}{[1;c_{[1,l-1]}]}{I}{c_I}$$
$$\fonctio{\PC([l,n-1])}{[\s_{[l+1,n]};
\s_{[l,n]}]}{J}{c_{\bar{J}}\s_{[l,n]}}
\leqno{\text{and}}$$
are increasing bijections (here, $\bar{J}$ denotes the complement of $J$). 
On the other hand, the map 
$$\fonctio{\PC([1,l-1])\times\PC([l,n-1])}{\PC([1,n-1])}{(I,J)}{I \cup J}$$
is an increasing bijection. Finally, by composing all these bijections, 
we get an isomorphism of ordered sets
$$\fonction{\a}{\PC([1,n-1])}{[x;y]}{I}{c_{I \cap [1,l-1]} 
c_{\overline{I \cap [l,n-1]}} \s_{[l,n]} a_l.}$$

\bigskip

\soussection{The elements ${\boldsymbol{z \in [x;y]}}$ such that 
${\boldsymbol{tz<z}}$} 
If $I \subseteq [1,n-1]$ is such that $t\a(I) < \a(I)$, 
we set $\mut_I=\t_A(\mu_{\a(I),y})$. So we can rephrase $(?)$ as follows:
$$\mut_\vide \neq 0.\leqno{(??)}$$
But, by the induction formula that defines the $\mu$-polynomials 
and by \ref{kl pol}, we have, for all $I \subseteq [1,n-1]$ 
such that $t\a(I) < \a(I)$, 
\equat\label{mutilde I}
\mut_I = 1 -\sum_{\substack{I \varsubsetneq J \subseteq [1,n-1]\\ t\a(J) 
< \a(J)}} \mut_J.
\endequat
Let 
$$\EC=\{I \in \PC([1,n-1])~|~t\a(I) < \a(I)\}.$$
The set $\EC$ is easy to describe:

\bigskip

\begin{lem}\label{alpha I}
Let $I \subseteq [1,n-1]$. Then $t \a(I) > \a(I)$ if and only 
if $[1,l-1] \varsubsetneq I$. 
\end{lem}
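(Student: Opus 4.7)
The statement reduces to a direct sign computation of a single coordinate of $\a(I)^{-1}$ acting on $I_n$. Since $t = t_1$, applying Lemma~\ref{longueur}(b) to $\a(I)^{-1}$ with $j=1$ (and using $\ell(u)=\ell(u^{-1})$) yields $t\a(I) > \a(I)$ if and only if $\a(I)^{-1}(1) > 0$. So the plan is to compute this sign in terms of $I$.

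Writing $c = c_{I \cap [1,l-1]}$ and $d = c_{\overline{I \cap [l,n-1]}}$, and using $a_l^{-1} = a_l$ (formula (\ref{inverse al})) together with the involutivity of $\s_{[l,n]}$, one gets
\[
\a(I)^{-1}(1) = a_l\bigl(\s_{[l,n]}(j)\bigr), \qquad j = d^{-1}\bigl(c^{-1}(1)\bigr).
\]
By (\ref{al}), $a_l$ is negative on $[1,l]$ and the identity on $[l+1,n]$, while $\s_{[l,n]}$ reverses $[l,n]$ and fixes every point outside. Consequently $\a(I)^{-1}(1) < 0$ is equivalent to $\s_{[l,n]}(j) \in [1,l]$, i.e.\ to $j \in [1,l-1]$ or $j = n$.

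The main calculation is to pin down $j$. A short induction shows that for $J = \{j_1 < \cdots < j_k\} \subseteq [1,n-1]$ and $c_J = s_{j_1}\cdots s_{j_k}$, the inverse $c_J^{-1}$ sends $m$ to $m + p$, where $p$ is the length of the maximal run $\{m, m+1, \dots, m+p-1\} \subseteq J$. Applied to $c^{-1}(1)$: the output equals $l$ when $[1,l-1] \subseteq I$, and equals $\min([1,l-1] \setminus I) \in [1,l-1]$ otherwise.

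A case split then finishes the proof. If $[1,l-1] \not\subseteq I$, then $c^{-1}(1) \in [1,l-1]$, and since $d$ has support in $[l,n-1]$ it fixes this value, so $j \in [1,l-1]$ and $t\a(I) < \a(I)$. If $[1,l-1] \subseteq I$, then $c^{-1}(1) = l$ and the same combinatorial principle applied to $d$ gives $j = d^{-1}(l) = l + k$, where $k$ is the length of the maximal initial block of $[l,n-1] \setminus I$ starting at $l$. Three subcases then arise: $l \in I$ (so $k = 0$, $j = l$, $\s_{[l,n]}(j) = n$), $l \notin I$ with $I \cap [l,n-1] \ne \vide$ (so $k \ge 1$, $j \in [l+1,n-1]$, $\s_{[l,n]}(j) \in [l+1,n-1]$), and $I = [1,l-1]$ (so $k = n-l$, $j = n$, $\s_{[l,n]}(j) = l$). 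The first two give $\a(I)^{-1}(1) > 0$ and the last gives $\a(I)^{-1}(1) < 0$, and exactly the first two correspond to $[1,l-1] \varsubsetneq I$, which is what we wanted. The only mildly delicate point is tracking the boundary cases $l \in I$ versus $l \notin I$, and $I \cap [l,n-1] = \vide$, when computing $d^{-1}(l)$; the rest is bookkeeping.
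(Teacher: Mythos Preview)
Your proof is correct and follows the same approach as the paper's: both reduce via Lemma~\ref{longueur}(b) to checking the sign of $\a(I)^{-1}(1)$, then track $1$ through the factors $c_A$, $c_B$, $\s_{[l,n]}$, $a_l$ (the paper writes $A=I\cap[1,l-1]$, $B=\overline{I\cap[l,n-1]}$ for your $c$, $d$), with the same three-way case split on whether $[1,l-1]\not\subseteq I$, $[1,l-1]=I$, or $[1,l-1]\varsubsetneq I$.

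One small inaccuracy worth fixing: your general formula ``$c_J^{-1}(m)=m+p$ where $p$ is the length of the maximal run $\{m,\dots,m+p-1\}\subseteq J$'' is false for arbitrary $m$ (take $J=\{2\}$, $m=3$: then $c_J^{-1}(3)=s_2(3)=2$, not $3$). It \emph{is} correct whenever $m\le\min J$, and both of your applications ($m=1$ with $J\subseteq[1,l-1]$, and $m=l$ with $J\subseteq[l,n-1]$) satisfy this hypothesis, so your conclusions stand unchanged. The paper sidesteps this by arguing more coarsely in the case $[1,l-1]\varsubsetneq I$: it only uses that $c_B^{-1}(l)<n$ (equivalent to $B\ne[l,n-1]$), not the precise value.
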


\bigskip

\begin{proof}[Proof of Lemma \ref{alpha I}]
By Lemma \ref{longueur}, we just need to show that 
$$\text{\it $\a(I)^{-1}(1) > 0$ if and only if $[1,l-1] \varsubsetneq I$.}
\leqno{(\#)}$$
For simplification, we set 
$A=[1,l-1] \cap I$ and $B=\overline{I \cap [l,n-1]}$. 
So $\a(I)=c_A c_B \s_{[l,n]} a_l$. 

First, assume that $[1,l-1] \not\subseteq I$. Then 
$0 < c_A^{-1}(1) < n$, so $\s_{[l,n]}^{-1}c_B^{-1}c_A^{-1}(1)=c_A^{-1}(1)$ 
and $\a(I)^{-1}(1)=a_l^{-1}(c_A^{-1}(1)) < 0$ by \ref{al}. 
This shows $(\#)$ in this case.

Now, let us assume that $[1,l-1] = I$. Then 
$c_A=s_1\cdots s_{l-1}$ and $c_B=s_l \cdots s_{n-1}$ 
and so $c_A^{-1}(1)=l$ and $c_B^{-1}(l)=n$. In particular, 
$\a(I)^{-1}(1)=a_l^{-1}\s_{[l,n]}(n)=a_l^{-1}(l)=-1 < 0$ by \ref{al}. 
This shows $(\#)$ again in this case.

Now, let us assume that $[1,l-1] \varsubsetneq I$. 
Then $c_A^{-1}(1)=l$ and $c_B^{-1}(l) < n$ and so 
$\s_{[l,n]}^{-1}c_B^{-1}c_A^{-1}(1) > l$. So 
$\a(I)^{-1}(1) > 0$ by \ref{al}. The proof of $(\#)$ is complete.
\end{proof}

\bigskip

\soussection{Computation of the ${\boldsymbol{\mut_I}}$} 
We shall now compute the family $(\mut_I)_{I \in \EC}$ by descending induction 
on $|I|$, by using the formula \ref{mutilde I}. For this, 
the following well-known lemma will be useful.

\bigskip

\begin{lem}\label{sous-ensembles}
If $S$ is a finite set and $I \varsubsetneq S$, then 
$\DS{\sum_{I \subseteq J \subseteq S} (-1)^{|J|} = 0}$.
\end{lem}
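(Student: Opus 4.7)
The plan is to reduce the sum to a standard binomial identity by the substitution $J = I \cup K$ where $K \subseteq S \setminus I$. Under this substitution, which is a bijection between the subsets of $S$ containing $I$ and the subsets of $S \setminus I$, we have $|J| = |I| + |K|$, so the sum factors as
\[
\sum_{I \subseteq J \subseteq S} (-1)^{|J|} = (-1)^{|I|} \sum_{K \subseteq S \setminus I} (-1)^{|K|}.
\]
The inner sum is the standard binomial expansion of $(1 + (-1))^{|S \setminus I|}$, which equals $0$ provided $|S \setminus I| \ge 1$. Since the hypothesis $I \varsubsetneq S$ guarantees $S \setminus I$ is nonempty, this finishes the proof.

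An equally short alternative is an involution argument: fix any element $x \in S \setminus I$ (which exists by the strict inclusion), and consider the map $J \mapsto J \triangle \{x\}$ on the set of subsets $J$ with $I \subseteq J \subseteq S$. This map is well defined because $x \notin I$ ensures $I \subseteq J \triangle \{x\}$ as well, and it is a fixed-point-free involution pairing each $J$ with a subset whose cardinality differs by $1$. Hence the contributions $(-1)^{|J|}$ cancel in pairs.

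I would present the factorization version, since it is one line and requires no case analysis; the involution version is a backup in case a more combinatorial flavor is desired. There is no real obstacle here, as the statement is a purely elementary fact about the Boolean lattice; the only thing to make sure of is to use the hypothesis $I \varsubsetneq S$ exactly where it is needed, namely to ensure that the exponent $|S \setminus I|$ is at least $1$ so that $0^{|S \setminus I|} = 0$.
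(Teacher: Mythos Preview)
Your proof is correct; both the binomial-expansion argument and the sign-reversing involution are standard and complete. The paper itself does not supply a proof of this lemma (it is introduced as a ``well-known lemma'' and simply stated), so there is nothing to compare against.
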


\bigskip

To obtain the value of $\mut_\vide$, the proof 
goes in three steps.  

\bigskip

\equat\label{mutilde 1}
\text{\it If $[l,n-1] \subseteq I \varsubsetneq [1,n-1]$, 
then $\mut_I=(-1)^{n-|I|}$.}
\endequat

\bigskip

\begin{proof}[Proof of \ref{mutilde 1}]
First, note that $I \in \EC$ by Lemma \ref{alpha I}. 
We argue by descending induction on $|I|$. If $|I|=n-2$, 
then $\mut_I=1$, as desired. Now, let us assume that 
$[l,n-1] \subseteq I \varsubsetneq [1,n-1]$ and that 
$\mut_J=(-1)^{n-|J|}$ for all $I \varsubsetneq J \varsubsetneq [1,n-1]$. 
Then, by \ref{mutilde I}, we have
$$\mut_I=1-\sum_{I \varsubsetneq J \varsubsetneq [1,n-1]} (-1)^{n-|J|}.$$
Therefore,
$$\mut_I=1 + (-1)^{n-|I|} + (-1)^{n-(n-1)} - 
\sum_{I \subseteq J \subseteq [1,n-1]} (-1)^{n-|J|}=(-1)^{n-|I|},$$
the last equality following from Lemma \ref{sous-ensembles}.
\end{proof}

\bigskip

\equat\label{mutilde 2}
\text{\it If $I \in \EC$ is such that 
$[l,n-1] \not\subseteq I$ and $I \not\subseteq [1,l-1]$, 
then $\mut_I=0$.}
\endequat

\bigskip

\begin{proof}[Proof of \ref{mutilde 2}]
We shall again argue by descending induction on $|I|$. 
Let $I'=I \cup [l,n-1]$. Then, by \ref{mutilde I}, we have
$$\mut_I=1
-\sum_{\substack{J \in \EC\\ I \varsubsetneq J\text{ and }I' \subseteq J}}
\mut_J 
- \sum_{\substack{J \in \EC\\ I \varsubsetneq J\text{ and }I' \not\subseteq J}}
\mut_J.$$
But, if $J \in \EC$ is such that $I \varsubsetneq J$ and $I' \not\subseteq J$, 
(or, equivalently, $[l,n-1] \not\subsetneq J$), then $\mut_J=0$ by the 
induction hypothesis. On the other hand, 
if $J \in \EC$ is such that $I \varsubsetneq J$ and $I' \not\subseteq J$, 
then $\mut_J=(-1)^{n-|J|}$ by \ref{mutilde 1}. Therefore, 
$$\mut_I=1
-\sum_{\substack{J \in \EC\\ I \varsubsetneq J\text{ and }I' \subseteq J}}
(-1)^{n-|J|} = 1-\sum_{I' \subseteq J \varsubsetneq [1,n-1]} (-1)^{n-|J|} 
= -\sum_{I' \subseteq J \subseteq [1,n-1]} (-1)^{n-|J|}=0$$
by Lemma \ref{sous-ensembles}. 
\end{proof}

\bigskip

\equat\label{mutilde 3}
\text{\it If $I \subseteq [1,l-1]$, then $\mut_I=(-1)^{l-1-|I|}$.}
\endequat

\bigskip

\begin{proof}[Proof of \ref{mutilde 3}]
Note that $I \in \EC$. We shall argue by descending induction on $|I|$. 
First, for all $J$ such that $I \varsubsetneq J \subseteq [1,n-1]$, we have 
$t\a(I) > \a(I)$. Therefore, $\mut_{[1,l-1]}=1$, as desired.

Now, let $I \varsubsetneq [1,l-1]$ and assume that, for all 
$I \varsubsetneq J \subseteq [1,l-1]$, we have $\mut_J=(-1)^{l-1-|J|}$. 
Then
$$\mut_I=1-\sum_{\substack{J \in \EC \\ I \varsubsetneq J}} \mut_I.$$
Now, if $J \in \EC$ is such that $I \varsubsetneq J$, then three cases 
may occur:
\begin{itemize}
\item If $J \subseteq [1,l-1]$, then $\mut_J=(-1)^{l-1-|J|}$ by the induction 
hypothesis.

\item If $J \not\subseteq [1,l-1]$ and $[l,n-1] \not\subseteq I$, then 
$\mut_J=0$ by \ref{mutilde 2}.

\item If $[l,n-1] \subseteq J$, then $\mut_J=(-1)^{n-|J|}$.
\end{itemize}
Therefore, if we set $I'=I \cap [l,n-1]$, then we get
$$\mut_I=1-\sum_{I' \subseteq J \varsubsetneq [l,n-1]} (-1)^{n-|J|} 
- \sum_{I \varsubsetneq J \subsetneq [1,l-1]} (-1)^{l-1-|J|}.$$
But
$$1-\sum_{I' \subseteq J \varsubsetneq [l,n-1]} (-1)^{n-|J|}=
-\sum_{I' \subseteq J \subseteq [l,n-1]} (-1)^{n-|J|} = 0$$
$$-\sum_{I \varsubsetneq J \subsetneq [1,l-1]} (-1)^{l-1-|J|} = 
(-1)^{l-1-|I|} -\sum_{I \subseteq J \subsetneq [1,l-1]} (-1)^{l-1-|J|} 
= (-1)^{l-1-|I|}\leqno{\text{and}}$$
by Lemma \ref{sous-ensembles}. The proof is now complete.
\end{proof}

\bigskip

As a special case of \ref{mutilde 3}, we get that 
$$\mut_\vide=(-1)^{l-1}.$$
This shows (?). The proof of the Proposition \ref{exemples m1} is 
complete. 

\bigskip

\section{Consequence of Proposition \ref{exemples m1}}

\medskip

The aim of this section is to prove the following

\bigskip

\begin{prop}\label{quasi asymptotique}
Let $l \in \{0,1,\dots,n\}$, let $\a$, $\b \in Y_{l,n-l}$ and 
let $\s$ and $\s' \in \SG_{l,n-l}$ be such that $\s \sim_L \s'$. 
Assume that $b \ge (n-1) a$. Then 
$$\a a_l \s \b^{-1} \sim_L a_l \s' \b^{-1}.$$
\end{prop}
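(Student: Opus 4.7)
The strategy is to show $\a a_l \s \b^{-1} \sim_L a_l \s \b^{-1}$ for every $\a \in Y_{l,n-l}$; combining this with Proposition~\ref{aalsb} (which already handles $\s \sim_L \s'$) then yields the conclusion. Using Lemma~\ref{decomposition} to write $\a a_l = r_{i_1} \cdots r_{i_l}$ with $1 \le i_1 < \cdots < i_l \le n$, the goal becomes: every element $r_{i_1} \cdots r_{i_l}\, \s \b^{-1}$ lies in the same Kazhdan--Lusztig left cell as $r_1 r_2 \cdots r_l\, \s \b^{-1}$.

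The elementary building block is Proposition~\ref{exemples m1}. Our hypothesis $b \ge (n-1) a$ implies $b \ge (m-1) a$ for every $l < m \le n$, so the proposition applies verbatim in each standard parabolic subgroup $W_m \subseteq W_n$, yielding
\[
M^t_{r_1 \cdots r_l\, \s_{[l+1,m]},\; r_2 \cdots r_l r_m\, \s_{[l+1,m]}} \neq 0.
\]
Theorem~\ref{kl plus}(a) converts this into a relation $r_1 \cdots r_l \s_{[l+1,m]} \gauche r_2 \cdots r_l r_m \s_{[l+1,m]}$ inside $W_m$, which lifts to $W_n$ by the standard parabolic compatibility of the $M$-polynomials (their recursive definition through \ref{a}--\ref{b} only involves $p$-polynomials, and these coincide in $W_m$ and in $W_n$ for arguments in $W_m$). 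Lemma~\ref{remove b} then inserts an arbitrary $\b \in Y_{l,n-l}$ on the right without destroying the non-vanishing of $M^t$, while Proposition~\ref{aalsb} replaces $\s_{[l+1,m]}$ by any $\s$ in its left $\SG_{l,n-l}$-cell. Together these produce a family of one-step $\pre{L}$-comparisons whose effect on the index sequence $(i_1 < \cdots < i_l)$ is to replace an initial run $(1,2,\ldots,k)$ by $(2,3,\ldots,k,m)$.

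I would then induct on the statistic $\ell(\a) = \sum_k (i_k-k)$: whenever $\a \ne 1$ there is an index $k$ with $i_k > k$, and a suitable instance of the move above, carried out inside $W_{i_k}$, produces an element in the same $\pre{L}$-class whose statistic is strictly smaller; iterating reduces $\a$ to $1$. To upgrade the resulting chain of $\pre{L}$-comparisons into the required $\sim_L$-equivalence, I would invoke Proposition~\ref{mult w0}: since $w_n$ is central (see \ref{centre}), left multiplication by $w_n$ preserves Kazhdan--Lusztig left cells while, by Proposition~\ref{mult w0}(b)--(d), exchanging the roles of $l$ and $n-l$ in the decomposition of Lemma~\ref{decomposition}. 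Running the inductive construction on the $w_n$-translates therefore supplies the reverse chain of $\pre{L}$-comparisons, so the composite network of arrows closes into $\sim_L$-equivalences.

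The principal obstacle is the inductive step: one must check that the moves coming from Proposition~\ref{exemples m1} (enhanced by Lemma~\ref{remove b} and Proposition~\ref{aalsb}) really do generate a connected network of $\gauche$-arrows reaching every $\a \in Y_{l,n-l}$, while only perturbing $\s$ inside a single left cell of $\SG_{l,n-l}$. Closing the chain via the $w_n$-symmetry above, or equivalently via a careful combinatorial use of the $*$-operation from Corollary~\ref{star t}, is the secondary but essential subtlety in turning the $\pre{L}$-chain into the sought $\sim_L$-equivalence.
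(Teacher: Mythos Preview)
Your proposal assembles the right ingredients (Proposition~\ref{exemples m1}, Lemma~\ref{remove b}, Proposition~\ref{aalsb}, the $w_n$-symmetry) and the overall shape matches the paper's argument, but there is a genuine gap at the point you yourself flag as ``the principal obstacle''. The move coming from Proposition~\ref{exemples m1} (even after inserting $\b$ via Lemma~\ref{remove b}) is only available when the $\s$-component equals $\s_{[l+1,m]}$; Proposition~\ref{aalsb} lets you replace $\s_{[l+1,m]}$ only by elements of its own left $\SG_{l,n-l}$-cell, not by an arbitrary $\s$. So for a generic $\s$ you have no way to fire the move, and the inductive step on $\ell(\a)$ does not go through. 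Relatedly, inducting on $\ell(\a)$ rather than on $n$ means that when you ``work inside $W_{i_k}$'' you do not have the full statement there: the restricted element $v\in W_{i_k}$ has its own $\t\in\SG_{k,i_k-k}$ and $\g\in Y_{k,i_k-k}$, and you need the conclusion for \emph{that} pair, which an induction on $\ell(\a)$ in $W_n$ does not supply.

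The paper closes this gap by inducting on $n$ (so the full statement is available in every proper $W_m$), using it to reduce to the single key relation $r_1\cdots r_{l-1}r_n\,\s\b^{-1}\sim_L r_1\cdots r_l\,\s\b^{-1}$, and then doing a three-case analysis on $\s=(\l,\mu)$: when $\s=\s_{[l+1,n]}$, Proposition~\ref{exemples m1} and Lemma~\ref{remove b} give one direction of $\pre{L}$, and an obvious descending chain gives the other; when $\mu\neq\s_{[l+1,n]}$, a descending induction using Corollary~\ref{kl} (precisely the $*$-type moves you allude to only in passing) shifts $\s$ until the previous case or the parabolic induction applies; and when $\l\neq 1$, the $w_n$-symmetry of Proposition~\ref{mult w0} swaps the roles of $\l$ and $\mu$ to land in the second case. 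The role of $w_n$ is thus not to ``close a $\pre{L}$-chain'' as you suggest, but to reduce one $\s$-case to another; and the Corollary~\ref{kl}-based argument for general $\s$ is not a secondary subtlety but the substantive heart of the proof.
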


\bigskip

\noindent{\sc Remarks - } (1) The condition $\s \sim_L \s'$ 
does not depend on the choice of $a$ and $b$ in $\G$. Indeed, 
by \cite[Theorem 1]{geck induction}, $\s \sim_L \s'$ in $W_n$ 
if and only if $\s \sim_L \s'$ in $\SG_{l,n-l}$. But 
this last condition depends neither on the choice of $b$ 
(since $t \not\in \SG_{l,n-l}$) nor on the choice of $a$ 
(provided that it is in $\G_{>0})$.

\medskip

(2) If $b > (n-1) a$, then the above proposition 
is proved in \cite[Theorem 7.7]{lacriced} (see also 
\cite[Corollary 5.2]{bilatere} 
for the exact bound) by a counting argument. The proof 
below will not use this counting argument but uses instead 
the proposition \ref{exemples m1}: it allows to extend the scope 
of validity to the case where $b =(n-1) a$ (this is 
compatible with \cite[Conjecture A (b)]{semicontinu}).\finl

\bigskip

\begin{proof}
First, recall that 
$a_l \s \b^{-1} \sim_L a_l \s' \b^{-1}$ by Poposition \ref{aalsb}. 
This shows that we may (and we will) assume that $\s=\s'$. 
We want to show that $\a a_l \s\b^{-1} \sim_L a_l\s\b^{-1}$. 
We shall use induction on $n$. So let $(P_n)$ denote the 
following statement:
\begin{quotation}
\begin{itemize}
\item[$(P_n)$] For all $l \in [0,n]$, for all sequences 
$1 \le i_1 < \cdots < i_l \le n$, 
for all $\s \in \SG_{l,n-l}$ and for all $\b \in Y_{l,n-l}$, we have  
$r_{i_1}r_{i_2} \cdots r_{i_l} \s\b^{-1} \sim_L r_1r_2\cdots r_l \s\b^{-1}$.
\end{itemize}
\end{quotation}

\medskip

The property $(P_1)$ is vacuously true and the property $(P_2)$ 
can be easily checked by a straightforward computation. 
So we assume that $n \ge 3$ and $(P_m)$ holds for all $m < n$.
Now, let $l \in [0,n]$, let $1 \le i_1 < \cdots < i_l \le n$ 
be a sequence of elements of $[1,n]$, let $\s \in \SG_{l,n-l}$ 
and let $\b \in Y_{l,n-l}$. 
As a consequence of this induction hypothesis, we get:
\begin{quotation}
{\small\begin{lem}\label{induction}
If $k \in [1,l]$ is such that $i_k < n$, then 
$r_{i_1}r_{i_2} \cdots r_{i_l} \s\b^{-1} \sim_L 
r_1\cdots r_k r_{i_{k+1}} \cdots r_{i_l} \s\b^{-1}$.
\end{lem}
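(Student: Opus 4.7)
The plan is to prove the two preorder inequalities $w \pre{L} w'$ and $w' \pre{L} w$ separately, where $w := r_{i_1}\cdots r_{i_l}\sigma\beta^{-1}$ and $w' := r_1\cdots r_k\,r_{i_{k+1}}\cdots r_{i_l}\sigma\beta^{-1}$. The key structural observation is that since $i_k < n$, both $r_{i_1}\cdots r_{i_k}$ and $a_k = r_1\cdots r_k$ lie in the parabolic subgroup $W_{n-1}$. Applying Lemma~\ref{decomposition} inside $W_{n-1}$, there is a unique $\alpha_0 \in Y_{k,n-1-k} \subseteq \SG_{n-1}$ with $r_{i_1}\cdots r_{i_k} = \alpha_0\,a_k$ and $\ell(\alpha_0) = \sum_{j=1}^k(i_j - j)$. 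Consequently $w = \alpha_0\,w'$, and a direct length computation from the formula in Lemma~\ref{decomposition} gives $\ell(w) = \ell(\alpha_0) + \ell(w')$, so this factorization is length-additive.

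For the inequality $w \pre{L} w'$, length-additivity yields $T_{\alpha_0}T_{w'} = T_w$. Expanding $T_{\alpha_0}\,C_{w'}$ in the Kazhdan--Lusztig basis, the dominant contribution comes from $T_{\alpha_0}\,T_{w'} = T_w$, so $C_w$ appears with coefficient $1$. Hence $w \gauche w'$ via $h = T_{\alpha_0}$.

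For the reverse inequality $w' \pre{L} w$, I would invoke the induction hypothesis $(P_{n-1})$. Applied inside $W_{n-1}$ with trivial $\sigma^* = \beta^* = 1$, it yields $\alpha_0\,a_k \sim_L a_k$ in $W_{n-1}$, and by Geck's theorem (used in Remark~(1) just above the proposition) this equivalence also holds in $W_n$. The plan is then to transfer the underlying $\gauche$-relation $a_k \gauche \alpha_0 a_k$ through right multiplication by $\xi := r_{i_{k+1}}\cdots r_{i_l}\sigma\beta^{-1}$, arriving at $w' = a_k\xi \gauche \alpha_0 a_k\xi = w$ in $W_n$. The canonical decompositions of $w$ and $w'$ via Lemma~\ref{decomposition} provide the length-additivities $\ell(\alpha_0 a_k\xi) = \ell(\alpha_0 a_k) + \ell(\xi)$ and $\ell(a_k\xi) = \ell(a_k) + \ell(\xi)$, which is the combinatorial input needed for right multiplication by $T_\xi$ to be compatible at the leading order with the Kazhdan--Lusztig basis.

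The hard part is precisely this last step: formally propagating a $\gauche$-relation from $W_{n-1}$ to $W_n$ across right multiplication by the non-simple element $T_\xi$. Standard parabolic-induction compatibility of the Kazhdan--Lusztig basis should suffice, and the technique is in the same spirit as Lemma~\ref{remove b}, which performs a closely related compatibility at the level of the $M$-values for a common right factor $\beta$.
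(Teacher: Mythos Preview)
Your first inequality $w \pre{L} w'$ is fine: since $w=\alpha_0 w'$ with $\ell(w)=\ell(\alpha_0)+\ell(w')$, iterating Theorem~\ref{kl plus}(a) along a reduced expression of $\alpha_0$ shows that $C_w$ occurs in $C_{s_{j_1}}\cdots C_{s_{j_m}} C_{w'}$.

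The reverse direction, however, has a genuine gap. You obtain $\alpha_0 a_k \sim_L a_k$ in $W_{n-1}$ (hence in $W_n$) from $(P_{n-1})$, and then you want to deduce $\alpha_0 a_k\,\xi \sim_L a_k\,\xi$. But left-cell equivalence is \emph{not} preserved under right multiplication by an arbitrary element $\xi$; the only general tool in this direction is \cite[Proposition~9.11]{lusztig}, which requires the right factor to be the inverse of a distinguished coset representative for the parabolic subgroup in which the equivalence holds. Your $\xi=r_{i_{k+1}}\cdots r_{i_l}\sigma\beta^{-1}$ is typically not such a representative for $W_{n-1}\backslash W_n$ (indeed it may lie entirely in $W_{n-1}$, or contain $r_n$, or neither). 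The appeal to Lemma~\ref{remove b} is not apt either: that lemma concerns the invariance of $M^s_{x,y}$ under removing a common $\beta$-factor, which is a different (and much more specific) compatibility.

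The paper avoids this obstacle by choosing the parabolic more carefully: it works in $W_{i_k}$ rather than $W_{n-1}$. Writing $w=vx^{-1}$ and $w'=v'x'^{-1}$ with $v,v'\in W_{i_k}$ and $x,x'\in X_n^{(i_k)}$, one checks $x=x'$ (since $ww'^{-1}\in W_{i_k}$) and that $v=r_{i_1}\cdots r_{i_k}\tau\gamma^{-1}$, $v'=r_1\cdots r_k\tau\gamma^{-1}$ for some $\tau\in\SG_{k,i_k-k}$, $\gamma\in Y_{k,i_k-k}$. Now $(P_{i_k})$ gives $v\sim_L v'$ directly in $W_{i_k}$, and \cite[Proposition~9.11]{lusztig} applies cleanly because $x^{-1}$ is precisely a minimal coset representative. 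The point is that by shrinking the parabolic to $W_{i_k}$, the troublesome factor $r_{i_{k+1}}\cdots r_{i_l}\sigma\beta^{-1}$ is absorbed into the coset representative $x^{-1}$, where it does no harm.
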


\begin{proof}
Let $w=r_{i_1}r_{i_2} \cdots r_{i_l} \s\b^{-1}$ and 
$w'=r_1\cdots r_k r_{i_{k+1}} \cdots r_{i_l} \s\b^{-1}$. Let 
us write $w=vx^{-1}$ and $w'=v'x^{\prime -1}$ with 
$v$, $v' \in W_{i_k}$ and $x$, $x' \in X_n^{(i_k)}$. 
First, note that 
$$w w^{\prime -1} = 
(r_{i_1} \cdots r_{i_k}) \cdot (r_1 \cdots r_k)^{-1} \in W_{i_k}.$$
Therefore, $x=x'$ and 
$$v v^{\prime -1} =
(r_{i_1} \cdots r_{i_k}) \cdot (r_1 \cdots r_k)^{-1} \in W_{i_k}.$$
Moreover, by Lemma \ref{caracterisation X}, 
we have $0 < x(1) < \cdots < x(i_k)$. So, if $i \in [1,i_k]$, then 
$v^{-1}(i) < 0$ (resp. $v^{\prime -1}(i) < 0$) if and only if 
$i \in \{i_1,\dots,i_k\}$ (resp. $\{1,\dots,k\}$). So, 
by Lemma \ref{decomposition}, we have 
$$v= r_{i_1} \cdots r_{i_k} \t \g^{-1}
\quad \text{and}\quad
v'=r_1\cdots r_k \t \g^{-1},$$
where $\t \in \SG_{k,i_k-k}$ and $\g \in Y_{k,i_k-k}$. 
But, since $i_k < n$, it follows from the induction 
hypothesis that $v \sim_L v'$. Here, note that $v \sim_L v'$ 
in $W_{i_k}$ if and only if $v \sim_L v'$ in $W_n$ 
(see \cite[Theorem 1]{geck induction}). So, by 
\cite[Proposition 9.11]{lusztig}, we get that $w \sim_L w'$.
\end{proof}

\begin{coro}\label{coro induction}
~
\begin{itemize}
\itemth{a} If $i_l < n$, then $r_{i_1}r_{i_2} \cdots r_{i_l} \s\b^{-1} \sim_L r_1r_2\cdots r_l \s\b^{-1}$.

\itemth{b} If $i_l=n$, then $r_{i_1}r_{i_2} \cdots r_{i_l} \s\b^{-1} \sim_L r_1r_2\cdots r_{l-1} r_n \s\b^{-1}$.
\end{itemize}
\end{coro}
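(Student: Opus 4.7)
The plan is to deduce the corollary directly from Lemma \ref{induction} by an appropriate choice of the index $k$, with one trivial case to check by hand. No further machinery is required, since all the substantive work (which combines the induction hypothesis $(P_m)$ for $m<n$, the description of parabolic cosets, and the compatibility of $\sim_L$ with parabolic restriction via \cite[Proposition 9.11]{lusztig}) is already packaged in Lemma \ref{induction}.

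For part (a), I would apply Lemma \ref{induction} with $k=l$. The hypothesis of the lemma becomes $i_l<n$, which is exactly our assumption; the tail product $r_{i_{k+1}}\cdots r_{i_l}$ is empty because $k=l$, and the conclusion collapses to
\[
r_{i_1}r_{i_2}\cdots r_{i_l}\s\b^{-1} \sim_L r_1r_2\cdots r_l\s\b^{-1},
\]
which is precisely (a).

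For part (b), assume $i_l=n$. If $l=1$ then $i_1=n$ and both sides of the asserted equivalence are literally $r_n\s\b^{-1}$, so there is nothing to prove. If $l\ge 2$, I would apply Lemma \ref{induction} with $k=l-1$. Since the sequence is strictly increasing, $i_{l-1}<i_l=n$, so the hypothesis $i_k<n$ is satisfied, and the conclusion of the lemma reads
\[
r_{i_1}\cdots r_{i_{l-1}}r_n\s\b^{-1} \sim_L r_1\cdots r_{l-1}r_n\s\b^{-1},
\]
which is exactly (b).

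The only point to be slightly careful about is the degenerate case $l=1$ in (b), where the choice $k=l-1=0$ would fall outside the range $[1,l]$ allowed in Lemma \ref{induction}; this is precisely why I treat it separately as a tautology. There is no other obstacle: the corollary is a bookkeeping consequence of the lemma.
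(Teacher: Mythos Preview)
Your proof is correct and is exactly the intended deduction: the paper states the corollary immediately after Lemma \ref{induction} without proof, and the only content is to specialize $k=l$ for (a) and $k=l-1$ for (b), with the trivial case $l=1$ in (b) handled separately just as you do.
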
}
\end{quotation}

By Corollary \ref{coro induction}, we only need to show that 
$$r_1r_2\cdots r_{l-1} r_n \s\b^{-1} \sim_L r_1r_2\cdots r_l \s \b^{-1}.
\leqno{(?)}$$
Now, let us write $\s=(\l,\mu)$, where 
$\l \in \SG_{[1,l]}$ and $\mu \in \SG_{[l+1,n]}$.
Three cases may occur:

\medskip

$\bullet$ {\it Case 1:} If $\l=1$ and $\mu=\s_{[l+1,n]}$, then $\s=\s_{[l+1,n]}$. Since 
$r_1 r_2\cdots r_{l-1}r_k \s\b^{-1}=s_k s_{k-1} \cdots s_{l+1} \s\b^{-1}$ 
for all $k > l$, we have 
\begin{multline*}
\qquad r_1 \cdots r_{l-1}r_n \s\b^{-1} \pre{L} 
r_1 \cdots r_{l-1}r_{n-1} \s\b^{-1} \pre{L} \cdots \\ 
\pre{L} r_1 \cdots r_{l-1}r_{l+1} \s\b^{-1} 
\pre{L} r_1 \cdots r_{l-1}r_l \s\b^{-1}.\qquad
\end{multline*}
On the other hand, by Proposition \ref{exemples m1} and 
Lemma \ref{remove b}, we get 
$r_1 \cdots r_{l-1}r_l\s\b^{-1}\pre{L}r_1 \cdots r_{l-1}r_n\s\b^{-1}$. 
This shows (?) in this particular case.

\medskip

$\bullet$ {\it Case 2:} 
If $\mu \neq \s_{[l+1,n]}$, then $n \ge l+2$ and 
there exists $k \in [l+1,n-1]$ such that $s_k \s > \s$. 
Let $i$ be maximal such that $s_i \s > \s$. 
We shall prove (?) by descending induction on $i$. 
For simplification, let $x=r_1\cdots r_{l-1} r_n \s \b^{-1}$. 

First, if $i = n-1$, then, by \ref{sk rl}, we have (since 
$n-2 > l-1$) 
$$s_{n-2} x = r_1\cdots r_{l-1} s_{n-2} r_n \s \b^{-1}
= r_1\cdots r_{l-1} r_n s_{n-1} \s \b^{-1} > s_{n-2} x,$$
$$s_{n-1} x = r_1\cdots r_{l-1} s_{n-1} r_n \s \b^{-1}
r_1\cdots r_{l-1} r_{n-1} \s \b^{-1} < x$$
$$s_{n-2} s_{n-1} x = r_1\cdots r_{l-1} s_{n-2} r_{n-1} \s \b^{-1} 
= r_1 \cdots r_{l-1} r_{n-2} \s \b^{-1} < s_{n-1} x.\leqno{\text{and}}$$
So $x \sim_L s_{n-1} x$ by Corollary \ref{kl}. On the other hand, by 
Corollary \ref{coro induction}, we have 
$s_{n-1} x \sim_L r_1 \cdots r_l \s \b^{-1}$, 
so we get (?) in this case.

Now, assume that $l+1 \le i < n-1$. Then $s_{i+1} \s < \s$ 
(by the maximality of $i$). Two cases may occur:

\begin{quotation}
\noindent{$\bullet$} {\it Subcase 1:} 
If $s_i s_{i+1} \s < s_{i+1} \s$, then we set $\t=s_{i+1} \s < \s$ 
and $y=r_1 \cdots r_{l-1} r_n \t \b^{-1}$. Then $y=s_i x < x$ 
by \ref{sk rl}. Moreover, still by \ref{sk rl}, we have 
$$s_{i-1} x = r_1 \cdots r_{l-1} r_n s_i \s \b^{-1} > x$$
$$s_{i-1} s_i x = r_1 \cdots r_{l-1} r_n s_i s_{i+1} \s \b^{-1} < s_i x.
\leqno{\text{and}}$$
So $x \sim_L y$ by Corollary \ref{kl}. But, by the induction 
hypothesis (and since $s_{i+1} \t > \t$), we have 
$y \sim_L a_l \t \b^{-1}$. But $\s \sim_L \t$ (again by Corollary 
\ref{kl} and since $s_i \t < \t < \s=s_{i+1} \t < s_i \s$), 
so $a_l \s \b^{-1} \sim_L a_l \t \b^{-1}$ by $(*)$. 
This shows (?).

\medskip

\noindent{$\bullet$} {\it Subcase 2:} If $s_i s_{i+1} \s > s_{i+1} \s$, 
then $s_{i+1} s_i \s > s_i \s$ (by an easy application of 
Lemma \ref{longueur}) so, if we set $\t=s_i\s$ and 
$y=r_1 \cdots r_{l-1} r_n \t \b^{-1}$, we have, by the 
induction hypothesis, $y \sim_L a_l \t\b^{-1}$. Moreover, 
$s_{i+1} \t > \t=s_i\s > \s > s_{i+1} \s$ and, 
by the same argument as in the subcase 1, we have 
$s_i y > y =s_{i-1} x > x > s_i x$. So 
$x \sim_L y$, $\s \sim_L \t$. So it follows from $(*)$ and 
$x \sim_L a_l \s \b^{-1}$, as required.
\end{quotation}

\medskip

$\bullet$ {\it Case 3:} If $\l \neq 1$, then we set 
$x=r_1 \cdots r_{l-1} r_n \s \b^{-1}$ and $y=r_1 \cdots r_l \s\b^{-1}$. 
We want to show that $x \sim_L y$. For this, let $x'=w_n x$, 
$y'=w_n y$, $\s' = \s_n \s \s_n^{-1} \s_{n-l,l}$ and 
$\b'=\b \s_n \s_{n-l,l}$. Then, by Proposition \ref{mult w0},
$$x'=r_l r_{l+1} \cdots r_{n-1} \s'\b^{\prime -1} \quad\text{and}\quad
y'=r_l r_{l+1} \cdots r_{n-2} r_n \s' \b^{\prime -1}.$$
But, by Corollary \ref{coro induction}, we have 
$$x' \sim_L r_1 \cdots r_{n-l} \s' \b^{\prime -1}
\quad\text{and}\quad
y' \sim_L r_1 \cdots r_{n-l-1} r_n \s' \b^{\prime -1}.$$
Now, if we write $\s'=(\l',\mu')$, with $\l' \in \SG_{[1,n-l]}$ 
and $\mu'\in\SG_{[n-l+1,n]}$, we have $\mu' \neq \s_{[n-l+1,n]}$ 
(because $\l \neq 1$). So, by Case 2, we have 
$$r_1 \cdots r_{n-l-1} r_n \s' \b^{\prime -1} \sim_L 
r_1 \cdots r_{n-l} \s' \b^{\prime -1}.$$
Therefore, $x'=w_n x \sim_L y'=w_ny$, and so $x \sim_L y$ 
by \cite[Corollary 11.7]{lusztig}.
\end{proof}

\bigskip

\begin{coro}\label{b sup}
Let $l \in \{1,\dots,n\}$, let $1 \le i_1 < \cdots < i_l \le n$, 
let $\s \in \SG_n$, let $\b \in Y_{l,n-l}$ and let $k \in [1,l]$ be 
such that $b \ge (i_k-1) a$. Then 
$$r_{i_1} \cdots r_{i_l} \s \sim_L r_1\cdots r_k r_{i_{k+1}} 
\cdots r_{i_l} \s.$$
\end{coro}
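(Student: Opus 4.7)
The plan is to imitate the proof of Lemma \ref{induction} (where the reduction from $W_n$ to the parabolic $W_{i_k}$ was driven by the outer induction $(P_m)$ for $m<n$), but to replace that induction step by a direct appeal to Proposition \ref{quasi asymptotique} applied inside $W_{i_k}$. The numerical hypothesis $b\ge (i_k-1)a$ is precisely what Proposition \ref{quasi asymptotique} requires in a Weyl group of type $B_{i_k}$, so everything lines up.

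First I would set $w=r_{i_1}\cdots r_{i_l}\,\s\b^{-1}$ and $w'=r_1\cdots r_k r_{i_{k+1}}\cdots r_{i_l}\,\s\b^{-1}$, and observe the key cancellation
\[
ww'{}^{-1}=r_{i_1}\cdots r_{i_k}\,(r_1\cdots r_k)^{-1}\in W_{i_k},
\]
which holds because each $r_j$ for $j\le i_k$ uses only generators in $S_{i_k}=\{t,s_1,\ldots,s_{i_k-1}\}$. Writing $w=vx^{-1}$ and $w'=v'x^{-1}$ with $v,v'\in W_{i_k}$ and $x\in X_n^{(i_k)}$ (the same $x$ works for both, since $ww'{}^{-1}\in W_{i_k}$), Lemma \ref{caracterisation X} together with Lemma \ref{decomposition} applied \emph{inside} $W_{i_k}$ forces
\[
v=r_{i_1}\cdots r_{i_k}\,\t\g^{-1},\qquad v'=a_k\,\t\g^{-1},
\]
for a common $\t\in\SG_{k,i_k-k}$ and $\g\in Y_{k,i_k-k}$: indeed, the positions $i\in[1,i_k]$ with $v^{-1}(i)<0$ are $\{i_1,\ldots,i_k\}$, while for $v'$ they are $\{1,\ldots,k\}$, and the quotient $vv'{}^{-1}$ pins down the remaining $\t\g^{-1}$-part.

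Now apply Proposition \ref{quasi asymptotique} in the Weyl group $W_{i_k}$ (of rank $i_k$, where the hypothesis $b\ge (i_k-1)a$ is exactly what is required): this gives $v\sim_L v'$ in $W_{i_k}$. By \cite[Theorem 1]{geck induction}, the equivalence $\sim_L$ in the parabolic subgroup $W_{i_k}$ is the restriction of $\sim_L$ in $W_n$, so $v\sim_L v'$ also holds in $W_n$. Finally, by \cite[Proposition 9.11]{lusztig} (compatibility of left cells with the decomposition $w=vx^{-1}$, $x\in X_n^{(i_k)}$), one concludes $w\sim_L w'$, as desired.

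The main obstacle I expect is bookkeeping rather than anything conceptual: checking that the two decompositions of $v$ and $v'$ given by Lemma \ref{decomposition} really produce the \emph{same} $\t\g^{-1}$-part, so that the problem reduces in $W_{i_k}$ to exactly the situation covered by Proposition \ref{quasi asymptotique}. Once this identification is secured, the rest is a formal chain of applications of results already proved in the paper.
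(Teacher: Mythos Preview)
Your proposal is correct and follows essentially the same route as the paper's proof: both reduce to the parabolic $W_{i_k}$ via the decomposition $w=vx^{-1}$ with $x\in X_n^{(i_k)}$, identify the $\SG_{i_k}$-parts of $v$ and $v'$, invoke Proposition~\ref{quasi asymptotique} inside $W_{i_k}$ (using $b\ge(i_k-1)a$), and finish with \cite[Proposition 9.11]{lusztig}. The only cosmetic difference is that the paper writes $v=r_{i_1}\cdots r_{i_k}\t$ and $v'=r_1\cdots r_k\t'$ with $\t,\t'\in\SG_{i_k}$ and deduces $\t=\t'$ directly from $v'v^{-1}=w'w^{-1}$, rather than decomposing further into $\t\g^{-1}$ via Lemma~\ref{decomposition}.
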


\bigskip

\begin{proof}
The proof proceeds essentially as in Lemma 
\ref{induction}. Let $w=r_{i_1} \cdots r_{i_l} \s$, let 
$w'=r_1\cdots r_k r_{i_{k+1}} \cdots r_{i_l} \s$ and let us 
write $w=vx^{-1}$ and $w' = v' x^{\prime -1}$ with 
$v$, $v' \in W_{i_k}$ and $x$, $x' \in X_n^{(i_k)}$. 

Since $w' w^{-1} = (r_1\cdots r_k)^{-1} (r_{i_1} \cdots r_{i_k}) \in W_{i_k}$, 
we get that $x=x'$. The same argument as in Lemma \ref{induction} 
shows that $v=r_{i_1} \cdots r_{i_k} \t$ and $v' = r_1 \cdots r_k \t'$ 
for some $\t$, $\t' \in \SG_{i_k}$. But 
$v' v^{-1}=w' w^{-1}=(r_1\cdots r_k)^{-1} (r_{i_1} \cdots r_{i_k})$, 
so $\t=\t'$. Now, by Proposition \ref{quasi asymptotique}, 
$v \sim_L v'$. So $w \sim_L w'$ by \cite[Proposition 9.11]{lusztig}.
\end{proof}

\bigskip

\section{Proof of Proposition \ref{exemples m2}\label{M2}}

\medskip

\begin{quotation}
{\bf Notation.} {\it In this section, and only in this section, we assume that 
$1 \le l \le n-1$ and that $(n-2)a < b \le (n-1) a$.} 
\end{quotation}

\medskip

We define a sequence $(C_j)_{l-1 \le j \le n-1}$ by induction as follows:
$$\begin{cases}
C_{l-1}=1, & \\
C_l = C_{s_l}, & \\
C_{j+1}=C_{s_{j+1}} C_j - C_{j-1}, & \text{if $l \le j \le n-2$.}
\end{cases}$$
Let $\mu$ denote the coefficient of $C_{a_{l-1}\s_{[l,n]}}$ in the expansion 
of $C_{n-1} C_{a_l \s_{[l,n]}}$ in the Kazhdan-Lusztig basis. 
To prove Proposition \ref{exemples m2}, it is sufficient to show 
the following statement:
\equat\label{mu}
\mu=\begin{cases}
1 & \text{if $b=(n-1)a$,}\\
Q^{-1} q^{n-1} + Q q^{1-n} & \text{if $(n-2) a < b < (n-1) a$.}
\end{cases}
\endequat

\begin{proof}[Proof of \ref{mu}]
If $r \in \ZM$, we set 
$$\HC_n[r]=\mathop{\oplus}_{\ell_t(w) \le r} A T_w = 
\mathop{\oplus}_{\ell_t(w) \le r} A C_w.$$
We shall show that 
\equat\label{?}
\begin{array}{c}
C_{n-1} C_{a_l \s_{[l,n]}} \equiv 
T_{s_{n-1}\cdots s_{l+1} s_l a_l\s_{[l,n]}} \hskip4cm\\ \hskip4cm
+ Q^{-1}q^{n-1} T_{a_{l-1}\s_{[l,n]}} 
\mod \Bigl(\HC_n[l-2] + \HC_n^{<0}\Bigr).
\end{array}
\endequat
The statement \ref{?} will be proved at the end of this section. 
Let us conclude the proof of \ref{mu}, assuming that \ref{?} holds. 

Let 
$$\mut=\begin{cases}
1 & \text{if $b=(n-1) a$},\\
Q^{-1} q^{n-1} + Q q^{1-n} & \text{if $(n-2) a < b < (n-1) a$.}
\end{cases}$$
We want to show that $\mu=\mut$. But, by \ref{?}, 
we have
$$C_{n-1} C_{a_l \s_{[l,n]}} - 
C_{s_{n-1}\cdots s_{l+1} s_l a_l\s_{[l,n]}} - \mut C_{a_{l-1}\s_{[l,n]}} 
\in \HC_n[l-2] + \HC_n^{<0} + \mathop{\oplus}_{w < a_{l-1}\s_{[l+1,n]}} 
A T_w.$$
Since 
$$\mathop{\oplus}_{w < a_{l-1}\s_{[l+1,n]}} A T_w
=\mathop{\oplus}_{w < a_{l-1}\s_{[l+1,n]}} A C_w,$$
there exists a family $(\nu_w)_{\ell_t(w) \le l-2
\text{ or }w < a_{l-1}\s_{[l+1,n]}}$ 
of elements of $A_{\ge 0}$ such that 
$$C_{n-1} C_{a_l \s_{[l,n]}} - 
C_{s_{n-1}\cdots s_{l+1} s_l a_l\s_{[l,n]}} - \mut C_{a_{l-1}\s_{[l,n]}} 
-\sum_{\stackrel{\ell_t(w) \le l-2}{\text{or }a_{l-1}\s_{[l+1,n]}}} 
\nu_w C_w \in \HC_n^{<0}.$$
Let $\nu_w'=\nu_w + \overline{\nu}_w - \t_A(\nu_w)$. Then 
$$C_{n-1} C_{a_l \s_{[l,n]}} - 
C_{s_{n-1}\cdots s_{l+1} s_l a_l\s_{[l,n]}} - \mut C_{a_{l-1}\s_{[l,n]}} 
-\sum_{\stackrel{\ell_t(w) \le l-2}{\text{or }a_{l-1}\s_{[l+1,n]}}} 
\nu_w' C_w \in \HC_n^{<0}$$
and $\overline{\nu}_w'=\nu_w$. So, if we set 
$$C=C_{n-1} C_{a_l \s_{[l,n]}} - 
C_{s_{n-1}\cdots s_{l+1} s_l a_l\s_{[l,n]}} - \mut C_{a_{l-1}\s_{[l,n]}} 
-\sum_{\stackrel{\ell_t(w) \le l-2}{\text{or }a_{l-1}\s_{[l+1,n]}}} 
\nu_w' C_w,$$
then 
$$\overline{C}=C\quad\text{and}\quad C \in \HC_n^{<0}.$$
So $C=0$ by \cite[Theorem 5.2]{lusztig}, and so $\mu=\mut$, as expected.
\end{proof}

\bigskip

So it remains to prove the statement \ref{?}:

\bigskip

\begin{proof}[Proof of \ref{?}]
First of all, we have 
$C_{a_l \s_{[l,n]}} = C_{a_l} C_{\s_{[l,n]}}$, since the supports 
of $a_l$ and $\s_{[l,n]}$ (in $S_n$) are disjoint. Moreover, 
since $l \le n-1$ (i.e. $a_l \in W_{n-1}$) 
and $b > (n-2) a$, it follows from 
\cite[Propositions 2.5 and 5.1]{bilatere} that 
\equat\label{cal}
C_{a_l}=(T_{t_1} + Q^{-1})(T_{t_2}+Q^{-1}) \cdots (T_{t_l}+Q^{-1}) 
T_{\s_l}^{-1}.
\endequat
Let $\HC(\SG_n)$ denote the sub-$A$-algebra of $\HC_n$ generated 
by $T_{s_1}$,\dots, $T_{s_{n-1}}$. It is the Hecke algebra of $\SG_n$ 
(with parameter $a$). Then $\HC_n[l-2]$ is a sub-$A$-module of $\HC_n$. 
Therefore, it follows from \ref{cal} that 
$$C_{a_l} \equiv \bigl(T_{w_l} + Q^{-1} \sum_{1 \le i \le l} 
T_{t_1\cdots t_{i-1} t_{i+1}\cdots t_l}\bigr) T_{\s_l}^{-1} \mod \HC_n[l-2].$$
But, if $1 \le i \le l$, then
$$t_1 \cdots t_{i-1} t_{i+1}\cdots t_l = 
s_i s_{i+1} \cdots s_{l-1} a_{l-1} \s_{l-1} s_{l-1} \cdots s_{i+1} s_i,$$
and $\s_l=  s_{l+1-i} \cdots s_{l-2} s_{l-1} 
\s_{l-1} s_{l-1} \cdots s_{i+1} s_i$. Moreover, 
$$\ell(\s_l)=\ell(s_{l+1-i} \cdots s_{l-2} s_{l-1}) + 
\ell(\s_{l-1} s_{l-1} \cdots s_{i+1} s_i).$$
Therefore, 
$$C_{a_l} \equiv T_{a_l} + Q^{-1} \sum_{1 \le i \le l}
T_{s_i s_{i+1}\cdots s_{l-1}} T_{a_{l-1}} 
(T_{s_{l+1-i}\cdots s_{l-2}s_{l-1}})^{-1} \mod \HC_n[l-2].$$
Finally, we get
$$C_{a_l \s_{[l,n]}} \equiv T_{a_l} C_{\s_{[l,n]}} + Q^{-1} \sum_{1 \le i \le l}
T_{c_{[i,l-1]}} T_{a_{l-1}} 
(T_{c_{[l+1-i,l-1]}})^{-1} C_{\s_{[l,n]}} \mod \HC_n[l-2].$$
Now, if $l-1 \le j \le n-1$, then 
\equat\label{congruence}
\begin{array}{c}
C_j C_{a_l \s_{[l,n]}} \equiv \DS{\sum_{i=l-1}^j} q^{i-j} T_{d_{[l,i]}} 
T_{a_l} C_{\s_{[l,n]}} \hskip4cm \\ \hskip1cm
+ Q^{-1} C_j \DS{\sum_{1 \le i \le l}}
T_{c_{[i,l-1]}} T_{a_{l-1}} 
(T_{c_{[l+1-i,l-1]}})^{-1} C_{\s_{[l,n]}}\mod \HC_n[l-2].
\end{array}
\endequat
\begin{quotation}
\begin{proof}[Proof of \ref{congruence}]
We shall argue by induction on $j$. The cases where $j=l-1$ or 
$j=l$ are obvious. So assume that $j \in [l,n-2]$ and that 
\ref{congruence} holds for $j$. By the induction hypothesis, we get 
$$\begin{array}{c}
C_{j+1} C_{a_l \s_{[l,n]}} \equiv C_{s_{j+1}} \DS{\sum_{i=l-1}^j} q^{i-j} T_{d_{[l,i]}} 
T_{a_l} C_{\s_{[l,n]}} - \DS{\sum_{i=l-1}^{j-1}} q^{i-j+1} T_{d_{[l,i]}} 
T_{a_l} C_{\s_{[l,n]}} \\ 
+ Q^{-1} C_{j+1} \DS{\sum_{1 \le i \le l}}
T_{c_{[i,l-1]}} T_{a_{l-1}} 
(T_{c_{[l+1-i,l-1]}})^{-1} C_{\s_{[l,n]}}\mod \HC_n[l-2].
\end{array}$$
Now, 
$$C_{s_{j+1}} T_{d_{[l,j]}} T_{a_l} C_{\s_{[l,n]}} = 
T_{d_{[l,j+1]}} T_{a_l} C_{\s_{[l,n]}} + 
q^{-1} T_{d_{[l,j]}} T_{a_l} C_{\s_{[l,n]}}$$
and, if $l-1 \le i < j$, then 
$$C_{s_{j+1}} T_{d_{[l,i]}} T_{a_l} C_{\s_{[l,n]}} = 
T_{d_{[l,i]}} T_{a_l} C_{s_{j+1}} C_{\s_{[l,n]}} = 
(q+q^{-1}) T_{d_{[l,i]}} T_{a_l} C_{\s_{[l,n]}}.$$
Now \ref{congruence} follows from a straightforward computation.
\end{proof}
\end{quotation}
\bigskip

Since $d_{[l,i]} \in Y_{l,n-l}$, we have 
$$T_{d_{[l,i]}} T_{a_l} C_{\s_{[l,n]}} = T_{d_{[l,i]} a_l} C_{\s_{[l,n]}} 
\equiv T_{d_{[l,i]} a_l\s_{[l,n]}} \mod \HC_n^{<0},$$
so, by \ref{congruence}, we get
\begin{multline*}
C_{n-1} C_{a_l \s_{[l,n]}} \equiv T_{d_{[l,n-1]} a_l\s_{[l,n]}} \hskip6cm \\
+ Q^{-1} C_{n-1} \DS{\sum_{1 \le i \le l}}
T_{c_{[i,l-1]}} T_{a_{l-1}} 
(T_{c_{[l+1-i,l-1]}})^{-1} C_{\s_{[l,n]}}\mod 
\Bigl(\HC_n[l-2]+\HC_n^{<0}\Bigr). 
\end{multline*}
For $1 \le i \le l$, let $\XC_i=Q^{-1} C_{n-1} T_{c_{[i,l-1]}} T_{a_{l-1}} 
(T_{c_{[l+1-i,l-1]}})^{-1} C_{\s_{[l,n]}}$.
There exists a family $(f_I)_{I \subseteq [l,n-1]}$ of elements 
of $\ZM$ such that $C_{n-1}=\sum_{I \subseteq [l,n-1]} f_I C_{d_I}$. 
Moreover, $f_{[l,n-1]}=1$. Also,
$$(T_{c_{[l+1-i,l-1]}})^{-1} = \sum_{J \subseteq [l+1-i,l-1]} 
(q-q^{-1})^{i-1-|J|} T_{c_J}.$$
Therefore, 
$$\XC_i=\sum_{\substack{I \subseteq [l,n-1] \\ J \subseteq [l+1-i,l-1]}}
f_I Q^{-1} (q-q^{-1})^{i-1-|J|} C_{d_I} T_{c_{[i,l-1]}} T_{a_{l-1}} 
T_{c_J} C_{\s_{[l,n]}}.$$
Let $\D_{i,I,J}=f_I Q^{-1} (q-q^{-1})^{i-1-|J|} C_{d_I} 
T_{c_{[i,l-1]}} T_{a_{l-1}} T_{c_J} C_{\s_{[l,n]}}$. If we express 
$\D_{i,I,J}$ in the standard basis $(T_w)_{w \in W_n}$, then 
the degree of the coefficients are bounded by 
$-b + (i-1-|J| + |I|) a$. Since $b > (n-2) a$, this degree is 
in $\G_{<0}$, except if $i=l$, $J=\vide$ and $I=[l,n-1]$. Therefore, 
$$C_{n-1} C_{a_l \s_{[l,n]}} \equiv T_{d_{[l,n-1]} a_l\s_{[l,n]}} 
+ \D_{l,[l,n-1],\vide} \mod \Bigl(\HC_n[l-2]+\HC_n^{<0}\Bigr).$$
But
\eqna
\D_{l,[l,n-1],\vide} &=& Q^{-1} (q-q^{-1})^{l-1} 
C_{s_{n-1}\cdots s_l} T_{a_{l-1}} C_{\s_{[l,n]}}\\
&=& Q^{-1} (q-q^{-1})^{l-1} T_{a_{l-1}}C_{s_{n-1}\cdots s_l} C_{\s_{[l,n]}}\\
&=& Q^{-1} (q-q^{-1})^{l-1}(q+q^{-1})^{n-l} T_{a_{l-1}} C_{\s_{[l,n]}},
\endeqna
the last equality following from Theorem \ref{kl plus} (a). 
So $\D_{l,[l,n-1],\vide} \equiv Q^{-1} q^{n-1} T_{a_{l-1}} C_{\s_{[l,n]}} 
\mod \HC_n^{<0}$. The proof of \ref{?} is complete.
\end{proof}

\bigskip

\section{Consequences of Proposition \ref{exemples m2}}

\medskip

The aim of this section is to prove the following proposition:

\bigskip

\begin{prop}\label{coro exemples m2}
Let $l \in \{1,\dots,n\}$ and assume that $b \le (n-1) a$. Then 
$$s_1s_2\cdots s_{n-1} a_{l-1}\s_{[l,n-1]} \sim_L 
ts_1s_2\cdots s_{n-1} a_{l-1}\s_{[l,n-1]}.$$
\end{prop}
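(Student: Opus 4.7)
The plan is to first bring $z = s_1 \cdots s_{n-1} a_{l-1} \s_{[l,n-1]}$ and $tz$ into canonical form, dispatch the trivial half, reduce the edge case $l=n$ by $w_n$-duality, and finally split on the size of $b$. Using the relations \ref{sk rl} (in particular $s_i r_i = r_{i+1}$ and $s_j r_i = r_i s_j$ for $j>i$), a direct induction on $k$ shows that $s_1 s_2 \cdots s_{n-1}\cdot r_1 r_2\cdots r_k = r_2 r_3\cdots r_{k+1}\cdot s_{k+1}\cdots s_{n-1}$; combining the case $k=l-1$ with the permutation identity $s_l s_{l+1}\cdots s_{n-1}\cdot \s_{[l,n-1]} = \s_{[l,n]}$ yields
$$z = r_2 r_3\cdots r_l\cdot \s_{[l,n]}, \qquad tz = r_1 r_2\cdots r_l\cdot \s_{[l,n]} = a_l\,\s_{[l,n]},$$
with $\ell(tz)=\ell(z)+1$. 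The inequality $tz>z$ together with Theorem \ref{kl plus}(a) gives $C_t C_z = C_{tz}+\cdots$, so $tz \gauche z$ is automatic and the remaining task is to show $z \gauche tz$. When $l=n$, Proposition \ref{mult w0} gives $w_n z = t\s_n$ and $w_n tz = \s_n$; left multiplication by the central $w_n$ preserves left cells by \cite[Corollary 11.7]{lusztig}, so this case reduces to $l=1$. We may therefore assume $l \in \{1,\ldots,n-1\}$.

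Suppose first that $(n-2)a < b \le (n-1)a$. For $l \ge 2$, Corollary \ref{b sup} applied to the sequence $2<3<\cdots<l$ with $k=l-1$ (the hypothesis $b \ge (l-1)a$ follows from $b > (n-2)a \ge (l-1)a$) yields $z \sim_L a_{l-1}\s_{[l,n]}$; for $l=1$ the identification $z = \s_n = a_0\s_{[1,n]}$ is immediate. Proposition \ref{exemples m2} then provides $a_{l-1}\s_{[l,n]} \gauche a_l\s_{[l,n]} = tz$, and chaining gives $z \gauche tz$.

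Suppose now that $b \le (n-2)a$, and argue by induction on $n$ (the base $n=2$ is covered by the previous paragraph). Factor $\s_{[l,n]} = \s_{[l,n-1]}\cdot y$ with $y := s_{n-1} s_{n-2}\cdots s_l$ (reduced, by a length count), and set $v_{n-1} := r_2\cdots r_l\cdot \s_{[l,n-1]} \in W_{n-1}$. Then $z = v_{n-1}\, y$ and $tz = (tv_{n-1})\, y$ are both reduced, and $y^{-1} = s_l s_{l+1}\cdots s_{n-1}$ belongs to $X_n$ by Lemma \ref{caracterisation X} (as a permutation it is the identity on $[1,l-1]$ and skips the value $l$). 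Applying the canonical-form identity of the first paragraph inside $W_{n-1}$, the element $v_{n-1}$ coincides with $s_1 \cdots s_{n-2}\, a_{l-1}\,\s_{[l,n-2]}$, so the inductive hypothesis (for $W_{n-1}$ with parameter $b \le (n-2)a$) yields $v_{n-1} \sim_L tv_{n-1}$ in $W_{n-1}$, hence in $W_n$ by compatibility of Kazhdan-Lusztig bases \cite[Theorem 1]{geck induction}. A final application of \cite[Proposition 9.11]{lusztig} propagates this equivalence through right-multiplication by $y$ to give $z = v_{n-1}\, y \sim_L (tv_{n-1})\, y = tz$ in $W_n$.

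The main obstacle is the canonical identification in the first paragraph: once $tz$ is displayed as $a_l\s_{[l,n]}$, Proposition \ref{exemples m2} becomes directly applicable in the high-$b$ regime, and the very same algebraic shape re-emerges inside $W_{n-1}$ to drive the induction in the low-$b$ regime.
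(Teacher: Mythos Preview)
Your proof is correct and follows essentially the same strategy as the paper: induction on $n$, splitting into the regime $(n-2)a < b \le (n-1)a$ (where Proposition~\ref{exemples m2} does the work) and the regime $b \le (n-2)a$ (where one factors through $W_{n-1}$ and invokes \cite[Proposition 9.11]{lusztig}). The only notable differences are organizational: you handle the boundary case $l=n$ by $w_n$-duality (reducing it to $l=1$) rather than by the separate decomposition $u_{n,n}=u_{n-1,n-1}\,s_{n-1}\cdots s_1 t$ used in the paper, and in the high-$b$ regime you invoke the packaged Corollary~\ref{b sup} to get $z \sim_L a_{l-1}\sigma_{[l,n]}$, whereas the paper writes out the explicit chain $tu_{l,n}=c_{[1,l-1]}a_{l-1}\sigma_{[l,n]} \pre{L} c_{[2,l-1]}a_{l-1}\sigma_{[l,n]} \pre{L} \cdots \pre{L} a_{l-1}\sigma_{[l,n]}$. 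Both variants lead to the same cycle of $\pre{L}$-relations.
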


\begin{proof}
Let $u_{l,n}=ts_1s_2\cdots s_{n-1} a_{l-1}\s_{[l,n-1]}=
ts_1\cdots s_{l-1}a_{l-1} s_l\cdots s_{n-1}\s_{[l,n-1]}= a_l \s_{[l,n]}$. 
We need to show that $tu_{l,n} \sim_L u_{l,n}$ 
(note that $tu_{l,n} \le u_{l,n}$). We shall argue by induction on $n$, 
the cases where $n=1$ or $2$ being obvious. So assume that $n \ge 3$ and 
that $tu_{l,n-1} \sim_L u_{l,n-1}$ if $b \le (n-2) a$. 

First, assume that $b \le (n-2) a$. Then 
$$\begin{cases}
u_{l,n}=u_{l,n-1} s_{n-1} \cdots s_{l+1} s_l & \text{if $l \le n-1$},\\
u_{l,n}=a_n=u_{l-1,n-1} s_{n-1} \cdots s_2 s_1 t & \text{if $l=n$.} 
\end{cases}$$
By the induction hypothesis, we have $tu_{k,n-1} \sim_L u_{k,n-1}$ so, 
since $s_{n-1} \cdots s_{l+1} s_l$ and $s_{n-1} \cdots s_2 s_1 t$ belong 
to $X_n^{-1}$, it follows from \cite[Proposition 9.11]{lusztig} 
that $tu_{l,n} \sim_L u_{l,n}$. 

This means that we may, and we will, assume that $(n-2)a < b \le (n-1)a$.
But, by Proposition \ref{exemples m2}, we have 
$a_{l-1} \s_{[l,n]} \pre{L} a_l \s_{[l,n]} = u_{l,n}$. On the other hand,
$$tu_{l,n} = c_{[1,l-1]} a_{l-1} \s_{[l,n]} 
\pre{L} c_{[2,l-1]} a_{l-1} \s_{[l,n]} \pre{L} \cdots \pre{L} 
s_{l-1} a_{l-1} \s_{[l,n]}\pre{L} a_{l-1} \s_{[l,n]}.$$
So $tu_{l,n} \sim_L u_{l,n}$, as desired.
\end{proof}

\bigskip

\remark{if and only if}
Note that the converse of Proposition \ref{coro exemples m2} 
also holds. Indeed, if $b > (n-1) a$ and if $x \sim_L y$ 
for some $x$ and $y$ in $W_n$, then $\ell_t(x)=\ell_t(y)$ 
(see \cite[Theorem 7.7]{lacriced} and \cite[Corollary 5.2]{bilatere}).\finl 

\bigskip

\begin{coro}\label{fin du coup}
Let $l \in \{1,2,\dots,n\}$ and let $\b \in Y_{l-1,n-l}$. Then 
$$s_1s_2\cdots s_{n-1} a_{l-1}\s_{[l,n-1]}\b^{-1} \sim_L 
ts_1s_2\cdots s_{n-1} a_{l-1}\s_{[l,n-1]}\b^{-1}.$$
\end{coro}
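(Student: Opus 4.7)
My plan is to adapt the proof of Proposition \ref{coro exemples m2} by appending the right factor $\b^{-1}$ throughout. The elements $u_{l,n}=a_l\s_{[l,n]}$ and $tu_{l,n}=c_{[1,l-1]}a_{l-1}\s_{[l,n]}$ differ only by left-multiplication by $t$, and the proof of Proposition \ref{coro exemples m2} produces the chain
\[
tu_{l,n}=c_{[1,l-1]}a_{l-1}\s_{[l,n]}\pre{L}c_{[2,l-1]}a_{l-1}\s_{[l,n]}\pre{L}\cdots\pre{L} a_{l-1}\s_{[l,n]}\pre{L} a_l\s_{[l,n]}=u_{l,n}.
\]
I would check that this very chain, with $\b^{-1}$ appended on the right to each term, remains a valid chain of $\pre{L}$-relations; combined with the trivial reverse relation $u_{l,n}\b^{-1}\pre{L}tu_{l,n}\b^{-1}$ (obtained from $C_t\,C_{tu_{l,n}\b^{-1}}$ having $C_{u_{l,n}\b^{-1}}$ as its leading term, since $tu_{l,n}\b^{-1}<u_{l,n}\b^{-1}$), this yields the desired equivalence $u_{l,n}\b^{-1}\sim_L tu_{l,n}\b^{-1}$.

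The first technical check is length-additivity: for each intermediate element $y=c_{[k,l-1]}a_{l-1}\s_{[l,n]}$ in the chain (and for $y=u_{l,n}$), one needs $\ell(y\b^{-1})=\ell(y)+\ell(\b)$. This should follow from the hypothesis $\b\in Y_{l-1,n-l}$: such a $\b$ has the two ascending runs $\b(1)<\cdots<\b(l-1)$ and $\b(l)<\cdots<\b(n-1)$, and a direct inspection of the right descents of each $y$ (viewed as a signed permutation of $I_n$) should show that a suitable reduced expression of $\b^{-1}$ can be applied without ever hitting a right descent. Granted length-additivity, each left-multiplication step of the chain upgrades automatically: if $sy>y$, then $s(y\b^{-1})=(sy)\b^{-1}>y\b^{-1}$, so $C_s\,C_{y\b^{-1}}$ has $C_{s(y\b^{-1})}$ as leading term and the $\pre{L}$-relation persists.

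The hard step is the final one, $a_{l-1}\s_{[l,n]}\b^{-1}\pre{L} a_l\s_{[l,n]}\b^{-1}$, which is a version of Proposition \ref{exemples m2} with $\b^{-1}$ inserted. I would attack it by revisiting the Hecke-algebra computation of Section \ref{M2} with $T_{\b}^{-1}$ multiplied on the right throughout, showing that the coefficient of $C_{a_{l-1}\s_{[l,n]}\b^{-1}}$ in $C_{n-1}\,C_{a_l\s_{[l,n]}\b^{-1}}$ remains the nonzero value computed in \ref{mu}. The key point is that $C_{n-1}$ lies in the subalgebra of $\HC_n$ generated by $T_{s_l},\dots,T_{s_{n-1}}$ while $\b\in Y_{l-1,n-l}$ has its unique right descent at $s_{l-1}$; together with length-additivity, this should allow the extra right factor $T_{\b}^{-1}$ to be pushed through the computation of Section \ref{M2} without altering the leading coefficient $Q^{-1}q^{n-1}$ of $T_{a_{l-1}\s_{[l,n]}}$, and the main obstacle will be bookkeeping the additional cross-terms (Lemma \ref{remove b} does not apply directly since $\ell_t$ differs between the two elements). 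A slicker alternative route would be to invoke Corollary \ref{star t}: since left-multiplication by $t$ commutes with the $*$-operations $\g_i$, and $*$-operations preserve $\sim_L$ by \ref{star}, it would suffice to show that $u_{l,n}\b^{-1}$ lies in the right $*$-orbit of $u_{l,n}$, to be attempted by induction on $\ell(\b)$ exploiting the descent structure of $\b$.
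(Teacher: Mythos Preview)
Your ``slicker alternative'' at the end is exactly the route the paper takes. The paper argues by induction on $\ell(\b)$: the base case $\b=1$ is Proposition~\ref{coro exemples m2}, and for the inductive step one writes $a_{l-1}\b=r_{j_1}\cdots r_{j_{l-1}}$, picks $k$ with $j_k-j_{k-1}\ge 2$ (which exists since $\b\neq 1$), checks that $w\in\DC_{j_k-1}(W_n)$ with $\g_{j_k-1}(w)=s_1\cdots s_{n-1}a_{l-1}\s_{[l,n-1]}\b'^{-1}$ for $\b'=s_{j_k-1}\b\in Y_{l-1,n-l}$ of length $\ell(\b)-1$, and concludes via Corollary~\ref{star t} (so $\g_{j_k-1}(tw)=t\g_{j_k-1}(w)$) together with~\ref{star} and the induction hypothesis. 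So the idea you flag in your last sentence is the right one, and it is not merely an alternative but the intended argument.

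Your ``main approach'' (re-running the computation of Section~\ref{M2} with a factor $T_\b^{-1}$ on the right) is not what the paper does and has real obstacles beyond bookkeeping. First, the chain you display comes only from the critical range $(n-2)a<b\le(n-1)a$ in the proof of Proposition~\ref{coro exemples m2}; for $b\le(n-2)a$ that proof uses induction on $n$ via the factorisation $u_{l,n}=u_{\cdot,n-1}\cdot x$ with $x\in X_n^{-1}$, and appending $\b^{-1}$ does not obviously respect that factorisation. Second, in the critical range itself, the obstacle you identify is genuine: since $\ell_t$ differs, Lemma~\ref{remove b} is unavailable, and $C_{a_l\s_{[l,n]}\b^{-1}}$ is not $C_{a_l\s_{[l,n]}}$ times a standard basis element, so the delicate congruence~\ref{?} does not simply transfer; the support of $\b\in Y_{l-1,n-l}\subset\SG_{n-1}$ may well meet $\{s_l,\dots,s_{n-2}\}$ and interact with $C_{\s_{[l,n]}}$. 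The $*$-operation route avoids all of this by reducing directly to the case $\b=1$ already established.
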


\bigskip

\begin{proof}
Let $w=s_1s_2 \cdots s_{n-1} a_l \s_{[l+1,n-1]}\b^{-1}$. 
We want to show that $w \sim_L tw$. We shall argue by induction on $\ell(\b)$. 
If $\ell(\b)=0$ (i.e. $\b=1$), this is just the proposition 
\ref{coro exemples m2}. Sp we assume now that $\ell(\b) \ge 1$. 
We shall use the $*$-operation (see \S\ref{star sec}). 
For this, we need to study the action of the $\g_i$'s on $w$, when 
possible. 

We have $\s_{[l,n-1]}a_{l-1}=a_{l-1} \s_{[l,n-1]}$, so 
$$w=s_1s_2 \cdots s_{n-1} \s_{[l,n-1]} (a_{l-1}\b)^{-1}=
s_1s_2\cdots s_{l-1} \s_{[l,n]} (a_{l-1}\b)^{-1}.$$ 
Let $1 \le j_1 < \cdots < j_{l-1} \le n-1$ be the unique sequence 
such that $a_l\b=r_{j_1} r_{j_2} \cdots r_{j_{l-1}}$. Since 
$\ell(\b) > 0$, we have $(j_1,j_2,\dots,j_{l-1}) \neq (1,2,\dots,l-1)$, 
so there exists $k \in [1,l-1]$ such that $j_k-j_{k-1} \ge 2$ 
(where $j_0 = 0$ by convention). Note that $j_k < n$ so 
$j_k+1 \in [2,n]$. We have, by \ref{ri1 ril} 
\eqna
w(j_k) s_1\cdots s_{l-1} \s_{[l,n]} 
(r_{j_1}\cdots r_{j_{l-1}})^{-1}(j_k)&=&s_1\cdots s_{l-1} \s_{[l,n]}(k-l)\\
&=&-s_1\cdots s_{l-1} (l-k) \\
&=& -(l+1-k) < 0
\endeqna
and
\eqna
w(j_k-1) s_1\cdots s_{l-1} \s_{[l,n]} (r_{j_1}\cdots r_{j_{l-1}})^{-1}(j_k-1)
&=&s_1\cdots s_{l-1} \s_{[l,n]}(l+q)\\
&=&s_1\cdots s_{l-1} (n+1-q) \\ &=& n+1-q > 0
\endeqna
for some $q \in [1,n+1-l]$. Moreover,
a similar computation shows that (with the convention that $j_{l}=n+1$) 
$$w(j_k)=
\begin{cases}
-(l-k) & \text{if $j_{k+1} =j_k+1$,}\\
n-q & \text{if $j_{k+1} \ge j_k + 2$.}
\end{cases}$$
In any case, we have
$$w(j_k) < w(j_k+1) < w(j_k-1).$$
This shows that
$$w s_{j_k-1} s_{j_k} < w s_{j_k-1} < w < w s_{j_k},$$
So $w \in \DC_{j_k-1}(W_n)$ and $\g_{j_k-1}(w) = w s_{j_k-1} < w$. 
Now, let $\b'=s_{j_k}\b$. An easy computation as above shows that 
$\b' < \b$, so that $\b' \in Y_{l,n-1-l}$ by Deodhar's Lemma 
(see \cite[Lemma 2.1.2]{ourbuch}). So $\g_{j_k}(w)=s_1\cdots s_{n-1} 
a_{l-1} \s_{[l,n-1]} \b^{\prime -1}$ where $\b' \in Y_{l,n-1-l}$ 
is such that $\ell(\b') = \ell(\b)-1$. But, by Corollary \ref{star t}, 
we have $t\g_i(w)=\g_i(tw)$. So , by \ref{star} and by the induction 
hypothesis, we get that $w \sim_L tw$, as desired.
\end{proof}

\bigskip

\section{Proof of Theorem \ref{main}}

\medskip

\soussection{Knuth relations} 
By recent results of Taskin \cite[Theorems 1.2 and 1.3]{taskin}, 
the equivalence relations $\sim_R^r$ and $\simeq_R^r$ can be described using generalisations 
of Knuth relations (for the relation $\simeq_R^r$, a similar result has been obtained 
independently by Pietraho \cite[Theorems 3.8 and 3.9]{pietraho} using other kinds of Knuth 
relations). We shall recall here Taskin's construction. For this, we shall need the following 
notation: if $0 \le r \le n-2$, we denote by $\EC_n^{(r)}$ the set of elements $w \in W_n$ such that 
$|w(1)| > |w(i)|$ for $i \in \{2,3,\dots,r+2\}$ and such that the sequence 
$(w(2),w(3),\dots,w(r+2))$ is a shuffle of a positive decreasing sequence 
and a negative increasing sequence. If $r \ge n-1$, we set $\EC_n^{(r)}=\vide$.
Following \cite[Definition 1.1]{taskin}, we introduce three relations which 
will be used to generate the relations $\sim_R^r$ and $\simeq_R^r$.

Let $w$, $w' \in W_n$ and let $r \ge 0$:

\begin{itemize}
\item[$\bullet$] We write $w \smile_1 w'$ if there exists $i \ge 2$ (respectively $i \le n-2$) 
such that $w(i) < w(i-1) < w(i+1)$ (respectively $w(i) < w(i+2) < w(i+1)$) and 
$w'=ws_i$.

\item[$\bullet$] We write $w \smile_2^r w'$ if there exists $i \le {\mathrm{min}}(r,n-1)$ such that 
$w(i)w(i+1) < 0$ and $w'=ws_i$. The relation $\smile_2^0$ never 
occurs.

\item[$\bullet$] We write $w \smile_3^r w'$ if $w \in \EC_n^{(r)}$ and $w'=wt$. 
If $r \ge n-1$, the relation $\smile_3^r$ never occurs.
\end{itemize}

\bigskip

\noindent{\sc Remark - } If $w \smile_2^r w'$, then $w \smile_2^{r+1} w'$. If $w \smile_3^r w'$, 
then $w \smile_3^{r-1} w'$ (indeed, $\EC_n^{(r)} \subseteq \EC_n^{(r-1)}$).\finl

\bigskip

\noindent{\bf Taskin's Theorem.} 
{\it With the above notation, we have:
\begin{itemize}
\itemth{a} The relation $\sim_R^r$ is the equivalence relation generated by 
the relations $\smile_1$, $\smile_2^r$ and $\smile_3^r$.

\itemth{b} The relation $\simeq_R^r$ is the equivalence relation generated by 
the relations $\smile_1$, $\smile_2^r$ and $\smile_3^{r-1}$.
\end{itemize}}

\bigskip

\soussection{Proof of Theorem \ref{main}}
Recall that the relation $\sim_{LR}^r$ (respectively $\simeq_{LR}^r$) is the equivalence relation 
generated by $\sim_L^r$ and $\sim_R^r$ (respectively $\simeq_L^r$ and $\simeq_R^r$). Recall also 
that $x \sim_L^r y$ (respectively $x \simeq_L^r y$, respectively $x \sim_L y$) if 
and only if $x^{-1} \sim_R^r y^{-1}$ (respectively $x^{-1} \simeq_R^r y^{-1}$, respectively 
$x^{-1} \sim_R y^{-1}$). So it is sufficient to show that Theorem \ref{main} holds whenever 
$? = R$. It is then easy to see that Theorem \ref{main} will follow from Taskin's Theorem 
and from the following three lemmas (which will be proved in subsections \ref{sub 1}, 
\ref{sub 2} and \ref{sub 3}).

\bigskip

\begin{lem}\label{1}
Let $w$, $w' \in W_n$ be such that $w \smile_1 w'$. 
Then $w \sim_R w'$.
\end{lem}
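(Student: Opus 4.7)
The plan is to reduce Lemma \ref{1} to the right-handed version of Corollary \ref{kl}, which is available: if $xs<x<xs'=y<ys$ for some $s,s'\in S_n$, then $x\sim_R y$ (this follows by inverting, applying Corollary \ref{kl}, and using \ref{siml simr}). So all I need to do is, in each of the two cases in the definition of $\smile_1$, exhibit generators $s,s'\in\{s_1,\dots,s_{n-1}\}$ witnessing this chain of inequalities between $w$ and $w'$.

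Case 1: Assume $w(i)<w(i-1)<w(i+1)$ with $w'=ws_i$. The natural candidates are $s=s_{i-1}$ and $s'=s_i$. Using Lemma \ref{longueur} repeatedly:
\begin{itemize}
\item $ws_{i-1}<w$ because $w(i-1)>w(i)$;
\item $ws_i>w$, i.e.\ $w<w'$, because $w(i)<w(i+1)$;
\item $w's_{i-1}>w'$ because $w'(i-1)=w(i-1)$ and $w'(i)=w(i+1)$, and the hypothesis gives $w(i-1)<w(i+1)$.
\end{itemize}
Hence $ws_{i-1}<w<ws_i=w'<w's_{i-1}$, and the right-handed Corollary \ref{kl} yields $w\sim_R w'$.

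Case 2: Assume $w(i)<w(i+2)<w(i+1)$ with $w'=ws_i$. Here the candidates are $s=s_{i+1}$ and $s'=s_i$. Again by Lemma \ref{longueur}:
\begin{itemize}
\item $ws_{i+1}<w$ because $w(i+1)>w(i+2)$;
\item $ws_i>w$ because $w(i)<w(i+1)$;
\item $w's_{i+1}>w'$ because $w'(i+1)=w(i)$, $w'(i+2)=w(i+2)$, and $w(i)<w(i+2)$ by assumption.
\end{itemize}
Thus $ws_{i+1}<w<ws_i=w'<w's_{i+1}$, and again the right-handed Corollary \ref{kl} gives $w\sim_R w'$.

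There is no real obstacle: the lemma is just a translation to type $B$ of the classical fact that the ordinary Knuth relations preserve Kazhdan--Lusztig right cells in the symmetric group, and since the relation $\smile_1$ only multiplies by generators $s_i\in S_n\setminus\{t\}$, the argument goes through verbatim via the generalities of Kazhdan--Lusztig theory — the unequal-parameter nature of $\varphi$ plays no role here. The only thing to verify is that the permutation-theoretic inequalities needed to invoke Corollary \ref{kl} are exactly the ones packaged into the definition of $\smile_1$, which is immediate from Lemma \ref{longueur}.
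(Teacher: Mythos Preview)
Your proof is correct and is essentially identical to the paper's own argument: in each case you exhibit the chain $ws<w<ws_i=w'<w's$ with $s\in\{s_{i-1},s_{i+1}\}$ and conclude via the right-handed version of Corollary \ref{kl} (obtained from \ref{siml simr}), exactly as the paper does.
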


\bigskip

\begin{lem}\label{2}
Let $w$, $w' \in W_n$ and let $r \ge 0$ be such that 
$b \ge ra$ and $w \smile_2^r w'$. Then $w \sim_R w'$.
\end{lem}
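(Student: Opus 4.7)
The plan is to reduce the statement, via parabolic induction, to the ``top'' parabolic subgroup $W_{i+1}$, and then to apply Proposition \ref{quasi asymptotique} after a direct computation with the decomposition of Lemma \ref{decomposition}.

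First, I pass to inverses: by \ref{siml simr}, we have $w \sim_R w s_i$ if and only if $w^{-1} \sim_L s_i w^{-1}$. Write $w = x v$ with $x \in X_n^{(i+1)}$ and $v \in W_{i+1}$, so that $w s_i = x(v s_i)$ with $v s_i \in W_{i+1}$, and note that $x^{-1}$ is of minimal length in $W_{i+1} x^{-1}$. Lemma \ref{caracterisation X} gives $0 < x(1) < \cdots < x(i+1)$, so $x$ preserves signs on $[1, i+1]$, and the hypothesis $w(i) w(i+1) < 0$ translates to $v(i) v(i+1) < 0$. Applying \cite[Proposition 9.11]{lusztig} to the pair $w^{-1} = v^{-1} x^{-1}$ and $(w s_i)^{-1} = (v s_i)^{-1} x^{-1}$ reduces the statement to showing $v^{-1} \sim_L s_i v^{-1}$ in $W_{i+1}$, under the hypothesis $b \ge i a$ (which follows from $b \ge r a$ and $i \le r$).

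Set $n' = i + 1$ and $u := v^{-1}$, so the goal is $u \sim_L s_{n'-1} u$ in $W_{n'}$. After possibly exchanging $u$ and $s_{n'-1} u$, we may assume $v(n'-1) < 0 < v(n')$; equivalently $u^{-1}(n'-1) < 0 < u^{-1}(n')$. Lemma \ref{decomposition} applied in $W_{n'}$ then yields $u = r_{j_1} \cdots r_{j_l} \sigma \beta^{-1}$ with $\sigma = \sigma_u$, $\beta = \beta_u$, and $\{j_1, \ldots, j_l\} = \{j \in [1, n'] : u^{-1}(j) < 0\}$, a set containing $n'-1$ but not $n'$; in particular $j_l = n'-1$. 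The commutation rules \ref{sk rl} give $s_{n'-1} r_{j_k} = r_{j_k} s_{n'-1}$ for $j_k < n'-1$ and $s_{n'-1} r_{n'-1} = r_{n'}$, so that
$$s_{n'-1} u = r_{j_1} \cdots r_{j_{l-1}} r_{n'} \sigma \beta^{-1}.$$
Using the formula for $\ell(\alpha)$ in Lemma \ref{decomposition} together with $\ell(s_{n'-1} u) = \ell(u) + 1$, a brief length check shows that this is the standard decomposition of $s_{n'-1} u$: the $\sigma$- and $\beta$-parts coincide with those of $u$, and only the $\alpha$-part changes (its associated sequence becomes $j_1 < \cdots < j_{l-1} < n'$).

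Finally, since $b \ge (n'-1) a$, Proposition \ref{quasi asymptotique} applies in $W_{n'}$ to both $u$ and $s_{n'-1} u$ (each written in its standard decomposition), giving
$$u \sim_L a_l \sigma \beta^{-1} \sim_L s_{n'-1} u,$$
whence $u \sim_L s_{n'-1} u$. Unwinding the reductions yields $w \sim_R w s_i$. The only slightly delicate point is the identification of the standard decomposition of $s_{n'-1} u$, which rests on the uniqueness in Lemma \ref{decomposition} combined with the commutation rules \ref{sk rl}; once this is in place, Proposition \ref{quasi asymptotique} finishes the proof in one stroke.
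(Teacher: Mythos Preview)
Your proof is correct and follows essentially the same approach as the paper's: a parabolic reduction via \cite[Proposition 9.11]{lusztig} to $W_{i+1}$ (after passing to inverses), followed by identifying the standard decompositions of $u$ and $s_{n'-1}u$ via Lemma~\ref{decomposition} and concluding with Proposition~\ref{quasi asymptotique}. If anything, your write-up is slightly more careful than the paper's (which contains a couple of typos in this passage) in explicitly verifying that the decomposition $s_{n'-1}u = r_{j_1}\cdots r_{j_{l-1}} r_{n'}\,\sigma\,\beta^{-1}$ is indeed the canonical one before invoking Proposition~\ref{quasi asymptotique}.
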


\bigskip

\begin{lem}\label{3}
Let $w \in W_n$ and let $r \ge 0$ be such that 
$b \le (r+1)a$ and $w \smile_3^r w'$. Then $w \sim_R w'$.
\end{lem}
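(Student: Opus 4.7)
The plan is to reduce the statement to Corollary \ref{fin du coup}, via two preliminary reductions.

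First, by \ref{siml simr}, $w \sim_R wt$ is equivalent to $w^{-1} \sim_L (wt)^{-1} = tw^{-1}$; setting $v := w^{-1}$, the problem becomes to prove $v \sim_L tv$.

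Second, I descend to the parabolic subgroup $W_m \subseteq W_n$ with $m := r+2$. Writing $w = xy^{-1}$ uniquely with $x \in X_n^{(m)}$ and $y \in W_m$ (via Lemma \ref{caracterisation X}), and noting that $t \in W_m$ forces $wtW_m = wW_m$, one finds that $wt$ has the same $x$-part and in fact $wt = x(ty)^{-1}$. Taking inverses yields $v = yx^{-1}$ and $tv = (ty)x^{-1}$; Lusztig's parabolic compatibility \cite[Proposition 9.11]{lusztig} then gives that $v \sim_L tv$ in $W_n$ follows from $y \sim_L ty$ in $W_m$. Since $x|_{[1,m]}$ is positive and strictly increasing, the condition $w \in \EC_n^{(r)}$ translates to $y^{-1} \in \EC_m^{(m-2)}$, and the hypothesis $b \le (r+1)a = (m-1)a$ is exactly the one required to apply Corollary \ref{fin du coup} inside $W_m$. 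So without loss of generality $n = m = r+2$, and it suffices to prove $y \sim_L ty$ for any $y \in W_n$ with $y^{-1} \in \EC_n^{(n-2)}$.

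The third and main step is to reduce such a general $y$ to a canonical element of Corollary \ref{fin du coup}. Up to replacing $y$ by $ty$ (which preserves the desired statement $y \sim_L ty$), one may assume $y^{-1}(1) = +n$. A direct computation shows that the elements $y_0 = s_1 s_2 \cdots s_{n-1} a_{l-1} \s_{[l,n-1]} \b^{-1}$ of Corollary \ref{fin du coup} have inverses exhausting precisely those $y_0^{-1} \in \EC_n^{(n-2)}$ whose shuffle $(y_0^{-1}(2), \ldots, y_0^{-1}(n))$ is of ``negatives-first'' shape (all negatives in positions $[2,l]$, all positives in positions $[l+1,n]$). For a general $y$ whose shuffle is interleaved, I would iterate the $*$-operations $\g_i$ of section \ref{star sec}, invoking \ref{star} and Corollary \ref{star t} to preserve the equivalence $y \sim_L ty$ and to carry $t$ through each $\g_i$; where $*$-operations alone do not suffice to rearrange an interleaving, I would supplement with Knuth-1 moves from Lemma \ref{1} (transferred between $\sim_R$ and $\sim_L$ via \ref{siml simr}). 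After finitely many such moves $y$ becomes a canonical $y_0$ and simultaneously $ty$ becomes $ty_0$, so Corollary \ref{fin du coup} closes the chain $y \sim_L y_0 \sim_L ty_0 \sim_L ty$.

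The hard part will be this last combinatorial reduction: the canonical forms furnished by Corollary \ref{fin du coup} exhaust only ``negatives-first'' shuffles, whereas $\EC_n^{(n-2)}$ allows arbitrary interleavings, so a careful case analysis on the interleaving pattern of the shuffle is required---verifying at each step both $\DC_i$-membership for the applied $*$-operation and that each Knuth-1 move applies simultaneously to $y$ and to $ty$.
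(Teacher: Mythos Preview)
Your first two reductions---passing to $\sim_L$ via inverses and descending to the parabolic $W_{r+2}$---are correct and match the paper exactly, as does the normalization $y^{-1}(1)=n$.

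The genuine gap is in your third step. Neither $*$-operations nor Knuth-$1$ moves can alter the \emph{sign pattern} of the shuffle $(y^{-1}(2),\dots,y^{-1}(n))$. A $*$-operation $\g_i$ acts on $y^{-1}$ by left multiplication by some $s_j$, which swaps the values $j$ and $j{+}1$ and hence fixes the set of positions carrying negative entries. A Knuth-$1$ move $w\mapsto ws_i$ (applied to $w=y^{-1}$) requires $w(i)<w(i{+}1)$ in both clauses of $\smile_1$, so it never swaps a positive entry past an adjacent negative one. For a concrete obstruction, take $n=3$ and $y^{-1}=(3,2,-1)\in\EC_3^{(1)}$: no composition of these moves (while keeping $y^{-1}(1)=3$) reaches a negatives-first shuffle. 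The ``careful case analysis'' you defer cannot succeed, because the move set is too weak---it is essentially the $\smile_1$-part of Taskin's relations, and the passage you need is of $\smile_2$-type.

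The paper closes this gap differently. After the same parabolic reduction, it also normalizes to $w^{-1}(n)>0$ by multiplying by $w_n$ (via \cite[Corollary~11.7]{lusztig}); then it writes $w=s_1\cdots s_{n-1}v$ with $v\in W_{n-1}$, uses Lemma~\ref{decomposition} to get $v=r_{i_1}\cdots r_{i_l}\,\s_{[l+1,n-1]}\,\b^{-1}$ (the shuffle condition forces $\s=\s_{[l+1,n-1]}$), and finally applies Corollary~\ref{b sup}---the input from Proposition~\ref{exemples m1} via Proposition~\ref{quasi asymptotique}---to replace the arbitrary indices $i_1{+}1,\dots,i_l{+}1$ by $2,\dots,l{+}1$, landing directly on the element of Corollary~\ref{fin du coup}. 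That corollary, not a combinatorial sorting argument, is the missing ingredient in your plan.
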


\bigskip

\soussection{Proof of Lemma \ref{1}\label{sub 1}}
Let $w$, $w' \in W_n$ be such that $w \smile_1 w'$. Let $i \in I_{n-1}^+$ be such that $w'=ws_i$. 
Then $i \ge 2$ and $w(i) < w(i-1) < w(i+1)$, or $i \le n-2$ and $w(i) < w(i+2) < w(i+1)$. 
In the first case, we have $ws_is_{i-1} > ws_i > w > ws_{i-1}$ while, in the second case, 
we have $ws_is_{i+1} > ws_i > w > ws_{i+1}$. So $w'=ws_i \sim_R w$ by 
\ref{siml simr} and Corollary \ref{kl}. 
The proof of Lemma \ref{1} is complete.

\bigskip

\soussection{Proof of Lemma \ref{2}\label{sub 2}}
Let $w$, $w' \in W_n$ and let $r \ge 0$ be such that 
$b \ge ra$ and $w \smile_2^r w'$. Let $i \in I_{n-1}^+$ be the element such that 
$w'=ws_i$. Then $i \le r$ and $w(i)w(i+1) < 0$. 
By exchanging $w$ and $w'$ if necessary, we may assume that $w(i) < 0$ and $w(i+1) > 0$. 

Let us write $w=xv$, with $x \in X_n^{(i+1)}$ and $v \in W_{i+1}$. 
Then $vs_i \in W_{i+1}$ and $ws_i=xvs_i$. 
Therefore, by \cite[Proposition 9.11]{lusztig}, we only need to show that 
$vs_i \sim_L v$. But $0 < x(1) < \cdots < x(r+1)$ (see Lemma \ref{caracterisation X}), 
and $v(j) \in I_{i+1}$ for all $j \in I_{i+1}$. So $v(i) > 0$ and $v'(i+1) < 0$. 
In particular, $v \smile_2^r vs_i$ (and even $v \smile_2^i v'$). 
This means that we may (and we will) assume that $i=n-1$. So we have
$$b \ge (n-1) a,\quad w(n-1) < 0 \quad\text{and}\quad w(n) > 0,$$
and we want to show that $w \sim_R ws_{n-1}$ or, in other words, that 
$$w^{-1} \sim_L s_{n-1}w^{-1}.\leqno{(?)}$$

Let $\a=\a_{w^{-1}}$, $\s=\s_{w^{-1}}$ and $\b=\b_{w^{-1}}$. Then
$$w^{-1}=\a a_l \s a_l \b^{-1}.$$
By Lemma \ref{decomposition}, there exists a unique sequence 
$1 \le i_1 < \cdots < i_l \le n$ such that 
$\a a_l = r_{i_1}\cdots r_{i_l}$ so 
$$w^{-1}=r_{i_1} \cdots r_{i_l} \s \b^{-1}.$$
But, again by Lemma \ref{decomposition}, we have $w^{-1}(i) < 0$ 
if and only if $i \in \{i_1,\dots,i_l\}$. So 
$$i_l=n-1.$$
So 
$$w^{-1}=r_{i_1} \cdots r_{i_{l-1}}r_{n-1} \s \b^{-1}.$$
and
$$s_{n-1}w^{-1}=r_{i_1} \cdots r_{i_{l-1}}r_n \s \b^{-1}.$$
So the result follows from Proposition \ref{quasi asymptotique}.

\bigskip

\soussection{Proof of Lemma \ref{3}\label{sub 3}} 
Let $w \in W_n$ and let $r \ge 0$ be such that 
$b \le (r+1)a$ and $w^{-1} \smile_3^r w^{\prime -1}$. We want to show that $w \sim_L w'=tw$. 
The proof goes through several steps.

\medskip

\noindent{\it First step: easy reductions.} 
First, note that $r \le n-2$. 
Let us write $w=vx^{-1}$, with $v \in W_{r+2}$ and $x \in X_n^{(r+2)}$. 
Then $0 < x(1) < \cdots < x(r+2)$ by Lemma \ref{caracterisation X}, 
so $v^{-1} \in \EC_{r+2}^{(r)}$. 
Then $tw=(tv) x^{-1}$ with $tv \in W_{r+2}$ so, by 
\cite[Proposition 9.11]{lusztig}, it is sufficient to show 
that $tv \sim_L v$. This shows that we may (and we will) assume that $r=n-2$. 

\smallskip

By \cite[Corollary 11.7]{lusztig}, this is equivalent to show 
that $tw_nw \sim_L w_n w$. Since $w_nw \in \EC_n^{(n-2)}$ we may, 
by replacing $w$ by $tw$, $w_nw$ or $tw_n w$, 
assume that $w^{-1}(1) > 0$ and $w^{-1}(n) > 0$. 
Since moreover $|w^{-1}(1)| > |w^{-1}(i)|$ for all $i \in \{2,3,\dots,n=r+2\}$, 
we have $w^{-1}(1)=n$. 

\smallskip

As a conclusion, we are now working under the following hypothesis:

\medskip

\begin{quotation}
{\bf Hypothesis.} 
{\it From now on, and until the end of this subsection, 
we assume that
\begin{itemize}
\itemth{1} $w^{-1}(1)=n$ and $w^{-1}(n) > 0$, and 

\itemth{2} $w^{-1} \in \EC_n^{(n-2)}$. 
\end{itemize}}
\end{quotation}

\medskip

And recall that we want to show that 
$$tw \sim_L w .\leqno{(?)}$$

\bigskip

\noindent{\it Second step: decomposition of $w$.} 
Let $v=s_{n-1}\cdots s_2 s_1 w$. Then $v^{-1}(n)=w^{-1}(1)=n$ by (3), 
so $v \in W_{n-1}$. Therefore, 
\equat\label{dec w}
w=s_1 s_2 \cdots s_{n-1} v,\quad 
s_1s_2 \cdots s_{n-1} \in X_n\quad\text{and}\quad
v \in W_{n-1}.
\endequat
Note that 
\equat\label{valeur v}
v^{-1}(k)=w^{-1}(k+1)
\endequat
for all $k \in [1,n-1]$, so that 
\equat\label{ordre v}
v \in \EC_{n-1}^{(n-3)}
\endequat
and, by (2), 
\equat\label{v positif}
v^{-1}(n-1) > 0.
\endequat
Let us write $v=r_{i_1} \cdots r_{i_l} \s \b^{-1}$, 
with $l=\ell_t(v)=\ell_t(w)$, $1 \le i_1 < \cdots < i_l \le n-1$, 
$\s \in \SG_{l,n-1-l}$ and $\b \in Y_{l,n-1-l}$. By 
\ref{v positif} and Lemma \ref{decomposition}, we have
\equat\label{il inf}
i_l \le n-2.
\endequat
Finally, note that 
\equat\label{sigma}
\s=\s_{[l+1,n-1]}.
\endequat
\begin{proof}[Proof of \ref{sigma}]
By \ref{ordre v}, we have $|v^{-1}(i_1)| > |v^{-1}(i_2)| > \cdots > |v^{-1}(i_l)|$. 
Therefore, it follows from \ref{ri1 ril} that 
$\b(\s^{-1}(l)) > \b(\s^{-1}(l-1)) > \cdots > \b(\s^{-1}(1))$. 
Since $\s$ stabilizes the interval $[1,l]$ and since 
$\b$ is increasing on $[1,l]$ (because it lies in $Y_{l,n-l}$), 
this forces $\s(k)=k$ for all $k \in [1,l]$. 

Similarly, if $1 \le j_1 < \cdots < j_{n-l} \le n$ denotes 
the unique sequence such that $[1,n]=\{i_1,\dots,i_l\} 
\cup \{j_1,\dots,j_{n-l}\}$, then 
$|v^{-1}(j_1)| > |v^{-1}(j_2)| > \cdots > |v^{-1}(j_{n-l})|$ by \ref{ordre v}. 
So it follows from \ref{ri1 ril} that 
$\b(\s^{-1}(l+1)) > \b(\s^{-1}(l+2)) > \cdots > \b(\s^{-1}(n))$ 
and, since $\s$ stabilizes the interval $[l+1,n]$ and $\b$ is 
increasing on the same interval, this forces $\s(l+k)=n+1-k$ 
for $k \in [1,n-l]$. 
\end{proof}

\bigskip

\noindent{\it Third step: conclusion.}
We first need the following elementary result:
\equat\label{s1 sn}
s_1 s_2 \cdots s_{n-1} r_{i_1} \cdots r_{i_l} = r_{i_1+1} \cdots r_{i_l+1} 
s_{l+1} s_{l+2} \cdots s_{n-1}.
\endequat
\begin{proof}[Proof of \ref{s1 sn}]
This follows easily from \ref{ri} or from \ref{sk rl}. 
\end{proof}

\bigskip
Now, let $\t=s_{l+1}s_{l+2} \cdots s_{n-1} \s_{[l+1,n]} \b^{-1} = 
\s_{[l+1,n]} \b^{-1}\in \SG_n$. 
Then, by \ref{s1 sn}, we have 
$$w=r_{i_1+1} r_{i_2+1} \cdots r_{i_l+1} \t\quad\text{and}\quad 
tw = r_1 r_{i_1+1} r_{i_2+1} \cdots r_{i_l+1} \t.$$
By \ref{il inf}, we have $b \ge (i_l+1 -1) a$, so, by 
Corollary \ref{b sup}, we have 
$$w \sim_L r_2 r_3 \cdots r_{l+1} \t\quad\text{and}\quad
tw \sim_L r_1 r_2 \cdots r_{l+1} \t.$$
So we only need to show that 
$r_2 r_3 \cdots r_{l+1} \t \sim_L r_1 r_2 \cdots r_{l+1} \t=tr_2 r_3 \cdots r_{l+1} \t$. 
But $r_2 \cdots r_{l+1} \s_{[l+1,n]} \b^{-1}s_1\cdots s_{n-1} a_l \s_{[l+1,n-1]} \b^{-1}$, 
with $\b \in Y_{l,n-1-l}$. So the result follows from Corollary \ref{fin du coup}.

The proof of Lemma \ref{3} is complete, as well as the proof of Theorem \ref{main}.

\bigskip

\end{document}

\section{End of the proof of Theorem \ref{main}}

\medskip
 
We shall now complete the proof of Theorem \ref{main}. This will follow from the results of Taskin 
\cite[Theorems 1.2 and 1.3]{taskin} or of Pietraho 
\cite[Theorems 3.8 and 3.9]{pietraho}. The results of 
Taskin and Pietraho are somewhat similar, but stated in 
slightly different forms; we could use any of them 
to conclude. We shall here present their results in a form that combines both 
contributions, because it will simplify our final arguments.

\begin{proof}
We shall only explain how to deduce easily this theorem starting 
from Taskin's results \cite[Theorems 1.2 and 1.3]{taskin}. 
Following \cite[Definition 1.1]{taskin}, let us define the following new relation~:
\begin{itemize}
\item[$\bullet$] If $r \le n-2$, we write $w \frown_3^r w'$ if $w'=wt$, 
$|w(1)| > |w(i)|$ for all $i \in \{2,3,\dots,r+2\}$ and the sequence 
$(w(2),w(3),\dots,w(r+2))$ is a shuffle of a positive decreasing sequence 
and a negative increasing sequence.
\end{itemize}
By \cite[Theorems 1.2 and 1.3]{taskin} (and since, if $w \frown_3^{r-1} w'$ then 
$w \frown_3^r w'$), we have:
\begin{itemize}
\itemth{a} The relations $\smile_1$, $\smile_2^r$ and $\frown_3^r$ generate $\sim_R^r$.

\itemth{b} The relations $\smile_1$, $\smile_2^r$ and $\frown_3^{r-1}$ generate $\simeq_R^r$.
\end{itemize}
Moreover, it is easily checked that, if $w \smile_3^r w'$, then $w \frown_3^r w'$. 

So it remains to show that, if $w \frown_3^r w'$, then there exists a sequence 
$w=x_0$, $x_1$,\dots, $x_k=w'$ such that, for $0 \le i \le k-1$, we have
$x_i \smile_1 x_{i+1}$ or $x_i \smile_2^r x_{i+1}$ or $x_i \smile_3^r x_{i+1}$. 
So assume that $w \frown_3^r w'$ and let 
$$m(w)=|\{1 \le i \le r+1~|~|w(i)| < |w(i+1)|\}|.$$
We shall argue by induction on $m(w)$. First note that, since $w'=wt$, we have 
$$m(w)=m(w').$$
Also, if $m(w)=0$, the result is obvious. So assume that $m(w) > 0$ and let $i$ be the 
smallest element of $\{1,2,\dots,r+1\}$ such that $|w(i)| < |w(i+1)|$. Note that 
$|w(1)| > |w(2)|$ by hypothesis, so $i \ge 2$. 
Since the sequence $(w(2),w(3),\dots,w(r+2))$ is a shuffle of a positive decreasing sequence 
and a negative increasing sequence, this means that $w(i)$ and $w(i+1)$ (as well 
as $w'(i)$ and $w'(i+1)$) have opposite signs. 

First, assume that $i \le r$. Then we set $x=ws_i$ and $x'=w' s_i$. Now, 
$x'=xt$ and it is easily checked that $x \frown_3^r x'$, that $m(x) =m(w)-1$ and 
that $w \smile_2^{r-1} x$ and $w' \smile_2^{r-1} x'$. So the result follows by induction.

Now, assume that $i=r+1$. Since $|w(i-1)| > |w(i)|$ by the minimality of $i$,

\end{proof}

\medskip